\title{When Predictions Shape Reality: Designing Honest Forecasts with Causal Conditioning}
\title{Proper Scoring under Performative Influence: A Causal Approach}
\title{Scoring Rules for Performative Forecasts}
\title{Performative Forecasts: Scoring Rules, Decision Theory and Parameter Estimation}
\title{Eliciting Correct Performative Forecasts through Conditioning}
\title{Eliciting Truthful Decision Support through Conditioning}
\title{Eliciting Truthful Performative Forecasts through Conditioning}
\title{Escaping Performative Pitfalls: Conditional Forecasts and Honest Scoring}
\title{Escaping Performative Pitfalls: Conditional Forecasts, Utility Maximisation and Batch-Scoring}
\title{Escaping Performative Pitfalls: Conditional Forecasts and Batch-Scoring}
\title{Escaping Performative Pitfalls: Conditional Forecasts, Incentive Compatibility and Batch-Scoring}
\title{Resolving Performative Pitfalls with Conditional Forecasts and Proper Batch-Scoring}
\title{Resolving Performative Pitfalls with Conditional Forecasts, Proper Scoring Rules and Batch-Scoring}
\title{Resolving Performative Pitfalls with Conditional Forecasts and Proper Scoring Rules}
\title{Resolving Performativity with Conditional Forecasts and Proper Scoring Rules}
\title{Resolving Performative Issues with Conditional Forecasts and Proper Scoring Rules}
\title{Resolving Performative Problems with Conditional Forecasts and Proper Scoring Rules}
\title{Escaping Performative Pitfalls with Conditional Forecasts and Proper Scoring Rules}
\title{Conditional Forecasts and Proper Scoring Rules for Reliable Performative Predictions}
\title{Conditional Forecasts and Proper Scoring Rules for Reliable and Accurate Performative Predictions}
\author{%
Philip Boeken\\
University of Amsterdam\\
\And
Onno Zoeter\\
Booking.com
\And
Joris M.~Mooij\\
University of Amsterdam
}
\begin{document}

\maketitle

\begin{abstract}
    Performative predictions are forecasts which influence the outcomes they aim to predict, undermining the existence of correct forecasts and standard methods of elicitation and estimation. We show that conditioning forecasts on covariates that separate them from the outcome renders the target distribution forecast-invariant, guaranteeing well-posedness of the forecasting problem. However, even under this condition, classical proper scoring rules fail to elicit correct forecasts. We prove a general impossibility result and identify two solutions: (i) in decision-theoretic settings, elicitation of correct and incentive-compatible forecasts is possible if forecasts are separating; (ii) scoring with unbiased estimates of the divergence between the forecast and the induced distribution of the target variable yields correct forecasts. Applying these insights to parameter estimation, conditional forecasts and proper scoring rules enable performatively stable estimation of performatively correct parameters, resolving the issues raised by Perdomo et al.\ (2020). Our results expose fundamental limits of classical forecast evaluation and offer new tools for reliable and accurate forecasting in performative settings.
\end{abstract}

\section{Introduction}
In many real-world settings, predictions influence the outcomes they seek to forecast. A forecast of high traffic may cause drivers to change routes; a grim economic outlook may shift investor behaviour \citep{mackenzie2007do}. These \emph{performative forecasts} break the classical assumption that the target distribution is fixed — instead, the forecasts are part of the causal system. This performativity complicates the task of correct prediction, and undermines the standard foundations of scoring rules and statistical estimation. Moreover, correctness of a forecast doesn't need to align with desirability of the outcome \citep{vanamsterdam2025when}.

If forecasts affect the target variable, correct forecasts need not exist: there may be no distributional fixed point where the forecast aligns with the induced outcome, as is the case in \emph{self-defeating prophecies}. However, we show that the forecasting problem becomes well-posed when the forecaster makes \emph{conditional predictions} — in particular, predictions given observed covariates that separate the forecast from the target: the target distribution becomes \emph{forecast-invariant}, and the existence of a correct forecast is recovered.

This paper formalizes the performative forecasting problem in a causal framework and studies conditions under which correct forecasting is possible. We prove that even under forecast-invariance, classical proper scoring rules generally fail to elicit correct forecasts. We provide two concrete solutions: (i) in a decision-theoretic setting, proper scoring can be recovered under structural conditions on the causal graph, producing \emph{incentive compatibility} of the forecasters' score and the agents utility; (ii) we introduce a proper scoring approach based on unbiased estimates of the divergence between the forecast and the induced distribution of the target variable. We also analyze how correct forecasting parameters can be stably learned from data in performative environments, resolving the problems raised by \cite{perdomo2020performative}.

\subsection{Related work and detailed contributions}\label{sec:related_work_contributions}
Our analysis of the existence of correct performative forecasts extends the work of \cite{oesterheld2023incentivizing} to conditional forecasts and formalises it in a causal modelling framework \citep{pearl2009causality} (Section \ref{sec:correct_forecasts}). We provide necessary and sufficient conditions for the existence of correct conditional forecasts (Theorem \ref{thm:forecastable_iff_separating}).
There is a vast literature on proper scoring rules for eliciting correct (non-performative) forecasts, see e.g.\ \cite{brier1950verification,mccarthy1956measures,grunwald2004game,gneiting2007strictly}. To our knowledge, the only existing work on scoring rules for performative forecasts is in specific decision-theoretic settings \citep{othman2010decision,chen2011decision,chen2014eliciting,oesterheld2020decision,hudson2025joint}. In Section \ref{sec:scoring} we introduce a general theory of proper scoring rules for performative forecasts without restricting how the forecast affects the target variable, contrary to what is done in the aforementioned literature. We then prove an impossibility theorem (\ref{thm:nonexistence_observationally_proper}) about the non-existence of proper scoring rules in general performative settings, complementing an impossibility result of \cite{othman2010decision} that scoring rules are generally not counterfactually strictly proper in decision-theoretic settings. In Section \ref{sec:bdt} we introduce the decision-theoretic setup considered by the earlier mentioned authors. Here, we separately consider the \emph{properness} of a scoring rule and its \emph{incentive compatibility} with an agent's utility (which have been merged into a single notion in previous works \citep{chen2011decision,oesterheld2020decision}), clarifying the different notions, and providing a solution to the issue raised by \cite{vanamsterdam2025when} that correct forecasts can be harmful. We then provide the additional insight that `utility scores' \citep{oesterheld2020decision} can be made \emph{strictly} proper by adding a suitably scaled strictly proper scoring rule to the utility score; see Theorem \ref{thm:utility_score_strictly_proper}. In Section \ref{sec:divergence} we introduce another resolution of finding proper scoring rules in performative settings by scoring based on unbiased estimates of the divergence between the forecast and the observed outcomes; see Theorem \ref{thm:divergence_proper} and its corollary. We give examples of unbiased estimators of certain strictly proper divergences, for binary and continuous outcome variables.
This is compared with an inverse probability weighting method of \cite{chen2011decision}.
Finally, in Section \ref{sec:estimation} we place this theory in the framework of \cite{perdomo2020performative} for parameter estimation in performative settings: we show that estimating parameters for conditional forecasts with scoring rule-based estimators resolves the issues raised by \cite{perdomo2020performative}: the estimated parameters don't change over successive retraining (`performative stability') and they induce a correct forecast (`performative optimality') (Theorem \ref{thm:performative_stability}).

\section{Existence of correct forecasts through conditioning}\label{sec:correct_forecasts}

Let $\Ycal$ be any measurable sample space of variable $Y$, and let $\Pcal_\Ycal$ be a set of probability measures on $\Ycal$. We model the effect of forecast $F \in \Pcal_\Ycal$ on outcome $Y$ with a causal model $M$ (e.g.\ a causal Bayesian network or a structural causal model \citep{pearl2009causality}\footnote{An introduction to causal models is given in Appendix \ref{app:scms}.}), inducing a causal mechanism $P_M(Y\given \Do(F))$.
\begin{wrapfigure}[5]{r}{0.28\linewidth}
	\vspace{-3pt}
	\centering
	\begin{tikzpicture}
		\node[var] (F) at (0, 0) {$F$};
		\node[var] (Y) at (2, 0) {$Y$};
		\draw[arr] (F) to (Y);
	\end{tikzpicture}
	\caption{}
	\label{fig:graph_performative}
\end{wrapfigure}
Let $\Mcal_{\mathrm{p}}$ be the set of such causal models $M$, so whose graph is a subgraph of Figure \ref{fig:graph_performative} when projected onto $F$ and $Y$, and which satisfy $\{P_M(Y) : M\in \Mcal_{\mathrm{p}}\}\subseteq \Pcal_\Ycal$.\footnote{For ease of exposition we don't let forecast $F$ depend on covariates $X$, nor let it be confounded with $Y$. In Appendix \ref{app:covariates}, we show that the results from the main paper extend to the setting where $F$ depends on covariates $X$, which are allowed to be confounded with $Y$.}

In a non-performative setting, if the forecaster believes $P^*$ to be the true distribution, then forecast $F$ is \emph{correct} if $F = P^*$. This extends to the performative setting as follows.
\begin{definition}\label{def:correct_forecast}
	Given a causal model $M\in\Mcal_{\mathrm{p}}$, we say that the forecast $F$ is \emph{correct for $M$} if $F = P_M(Y\given \Do(F))$.
\end{definition}
One can interpret correct forecasts as fixed-points of the mapping $F\mapsto P_M(Y\given \Do(F))$, see also \cite{grunberg1954predictability,simon1954bandwagon} or \cite{oesterheld2023incentivizing}. A \emph{self-fulfilling prophecy} is a forecast which causes itself to be correct  \citep{merton1949social}.
A correct forecast of $Y$ need not exist. For example, for $P_0, P_1 \in \Pcal_\Ycal$ with $P_0 \neq P_1$, let $M$ be such that
\vspace{-5pt}
\begin{equation*}
	P_M(Y\given \Do(F)) = \begin{cases}
		P_0 & \text{if $F = P_1$} \\
		P_1 & \text{otherwise},
	\end{cases}
\end{equation*}
then $M \in \Mcal_{\mathrm{p}}$ has no correct forecasts. This phenomenon is well known:
\begin{example}\label{ex:prophets_dilemma}
	Given a causal model $M\in \Mcal_{\mathrm{p}}$, a \emph{self-defeating prophecy} is a forecast which causes itself to be incorrect, or more formally, a forecast $F$ for which $F \neq P_M(Y\given \Do(F))$. An example would be a forecast of rising Covid-19 incidences, causing policy makers to implement social distancing or vaccination programs, effectively mitigating many Covid-19 incidences. When it is impossible for the forecaster (the `prophet') to be correct, this problem is sometimes referred to as the \emph{prophet's dilemma}.
\end{example}

Let $A$ be a variable with discrete sample space\footnote{We assume that $\Acal$ is discrete to mitigate measure-theoretic problems.} $\Acal$ and let $\Pcal_{\Acal \to \Ycal}$ be the space of conditional distributions $F(Y\given A): \Acal \to \Pcal_\Ycal, a\mapsto F(Y\given A=a)$. A \emph{conditional forecast} of $Y|A$ is a set of forecasts
\vspace*{-1pt}
\begin{equation*}
	F(Y\given A) = \{F(Y\given A=a) : a\in \Acal\} \in \Pcal_{\Acal \to \Ycal}.
\end{equation*}
If the conditional forecast $F(Y\given A) \in \Pcal_{\Acal \to \Ycal}$ affects the conditioning variable $A$ and outcome $Y$ then we have a causal mechanism as is graphically depicted in Figure \ref{fig:performative_conditional_forecast}. Let $\Mcal_{\mathrm{pc}}\subseteq \Mcal_{\mathrm{p}}$ be the set of such causal models $M$, so whose graph is a subgraph of Figure \ref{fig:performative_conditional_forecast} when projected onto $F, A$ and $Y$. If $A$ is a set of variables, let $\Mcal_{\mathrm{pc}}\subseteq \Mcal_{\mathrm{p}}$ be the set of causal models whose graph is a subgraph of Figure \ref{fig:performative_conditional_forecast} after merging the variables in $A$ into a single multi-dimensional variable (see Appendix \ref{app:scms} for a formal definition) and projecting onto $F, A$ and $Y$.

\begin{figure}[!htb]
	\begin{subfigure}[b]{0.5\textwidth}
		\centering
		\begin{tikzpicture}
			\node[var] (F) at (1.5, 0) {$F$};
			\node[var] (A) at (3, 0) {$A$};
			\node[var] (Y) at (4.5, 0) {$Y$};
			\draw[arr] (F) to (A);
			\draw[arr] (A) to (Y);
			\draw[biarr, bend left] (A) to (Y);
			\draw[arr, bend right] (F) to (Y);
		\end{tikzpicture}
		\caption{Causal graph of $\Mcal_{\mathrm{pc}}$}
		\label{fig:performative_conditional_forecast}
	\end{subfigure}
	\begin{subfigure}[b]{0.5\textwidth}
		\centering
		\includegraphics[width=.7\linewidth,trim=8 17 10 15,clip]{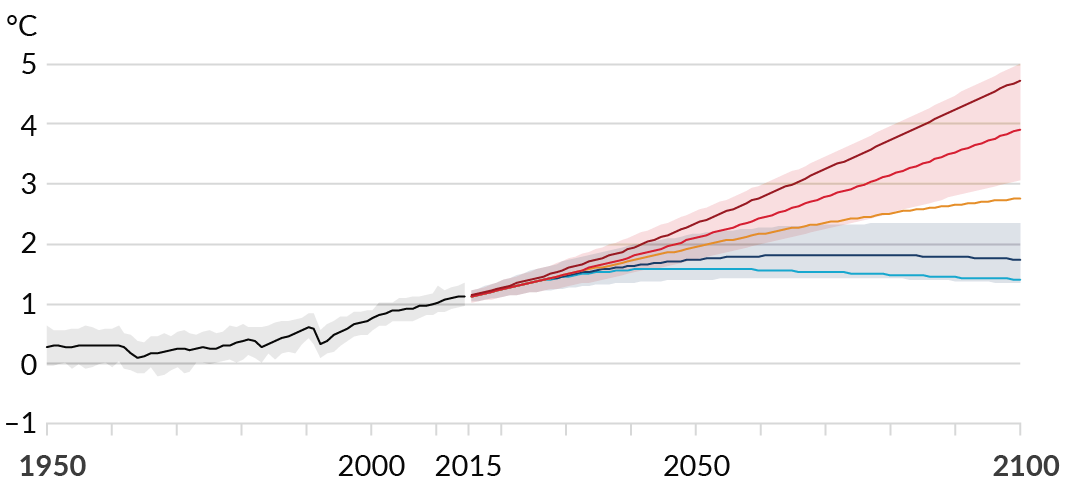}
		\caption{Example of conditional forecast}
		\label{fig:ipcc}
	\end{subfigure}
	\caption{}
\end{figure}

\begin{example}
	An example of a conditional forecast is the prediction of future average global temperatures given various scenarios of air-pollution, as provided by the \cite{ipcc2021summary} (Figure \ref{fig:ipcc}).
\end{example}

The notion of correctness of conditional forecasts is more nuanced than for marginal forecasts, since we can distinguish between correctness of the forecast for values of $A$ that can be observed, and the `counterfactual' values of $A$ which cannot be observed.
\begin{definition}\label{def:correct_conditional_forecast}
	Given a causal mechanism $M\in\Mcal_{\mathrm{pc}}$, the conditional forecast $F(Y\given A)$ is:
	\vspace*{-5pt}
	\begin{itemize}
		\setlength{\itemsep}{-4pt}
		\item \emph{observationally correct} if $F(Y\given A=a) = P_M(Y\given A=a, \Do(F))$ for all $a\in\Acal$ such that $P_M(A=a\given \Do(F)) > 0$;
		\item \emph{counterfactually correct} if $F(Y\given A=a) = P_M(Y\given A=a, \Do(F))$ for all $a\in\Acal$ such that $P_M(A=a\given \Do(F)) = 0$;
		\item \emph{correct} if it is observationally and counterfactually correct.
	\end{itemize}
\end{definition}
Here, we pick the version of the conditional distribution $P_M(Y\given A, \Do(F)) = P_M(Y\given \Do(A, F))$ to ensure that $P_M(Y\given A=a, \Do(F))$ is well-defined for values $a\in \Acal$ with $P_M(A=a\given \Do(F)) = 0$.
Note that the support of $A$, i.e. which values of $A$ can be observed after forecasting $F$, depends on the forecast $F$ itself.
The distinction between observational and counterfactual correctness will be useful for defining various types of properness of scoring rules in Section \ref{sec:scoring}, where one may require a forecast to only be observationally correct, or also counterfactually correct. In Appendix \ref{app:conditional_forecasts_proper} we display a similar distinction for non-performative conditional forecasts.

A way to resolve the prophet's dilemma is to make a conditional forecast instead of a marginal forecast. If in the causal graph $G$ of model $M$ we have a d-separation $Y\Perp^d_G F\given A$, then the target distribution satisfies $P_M(Y\given A, \Do(F)) = P_M(Y\given A)$, so it does not depend on the forecast itself. This for example holds if $A$ perfectly mediates $F$ and $Y$ and is not confounded with $Y$.\footnote{This has also been noted by \cite{hudson2025joint} for the special case where $A$ is an \emph{action}: ``\emph{One method for avoiding performativity is to elicit predictions conditional on various actions that can be taken in response to the prediction. As this reaction is the causal pathway by which a prediction affects outcomes, conditioning on it removes the performative aspect.}''}
\begin{definition}\label{def:forecast_invariance}
	We call the target $Y|A$ \emph{forecast-invariant} in $\Mcal'\subseteq \Mcal_{\mathrm{pc}}$ if the target distribution does not depend on the forecast, so if $P_M(Y\given A, \Do(F)) = P_M(Y\given A)$ for all $M\in\Mcal'$.
\end{definition}

\begin{example}
	Consider the prophet's dilemma of predicting Covid-19 incidences (Example \ref{ex:prophets_dilemma}). Denote with $A=0,1$ whether policy makers implement social distancing, and let $Y=0,1$ respectively denote `few' and `many' subsequent Covid-19 incidences. If for a marginal forecast $F \in [0,1]$ of the event $Y=1$ we have the stylised mechanism $A = \I\{F > 1/2\}$ and $Y=1-A$, then $F \neq P(Y\given \Do(F))$ for all $F\in [0,1]$, so the forecasting problem has no solution. However, if we consider a conditional forecast $F(Y\given A=0), F(Y\given A=1)$ with stylised mechanism $A=\argmin_a F(Y\given A=a)$ and $Y=1-A$, then the target $Y|A$ is forecast-invariant and the conditional forecast $F(Y\given A=0)=1, F(Y\given A=1)=0$ is correct, so the forecasting problem is well-posed.\footnote{As an alternative solution to mitigate performativity in Covid-19 forecasting, \cite{bracher2021preregistered} consider marginal, short-term forecasts with ``\emph{brief time horizons, at which the predicted quantities are expected to be largely unaffected by yet unknown changes in public health interventions.}''}
\end{example}

It is immediate that for a given target $Y|A$, forecast-invariance implies that $P_M(Y\given A)$ is the unique correct forecast. Forecast-invariance is not necessary for the existence of an (observationally) correct forecast. One way of motivating forecast-invariance is by relying on the causal graph of the underlying system: the d-separation $Y\Perp^d_G F\given A$ implies forecast-invariance, as mentioned earlier. If no other assumptions are made, this d-separation even \emph{characterises} forecast-invariance and whether the forecasting problem is well-posed, as shown in the following theorem. All proofs are provided in Appendix \ref{app:proofs}.

\begin{restatable}[]{theorem}{forecastableiffseparating}\label{thm:forecastable_iff_separating}
	Let $\Mcal_G'\subseteq \Mcal_{\mathrm{pc}}$ be the set of models compatible\footnote{Meaning that the Markov property holds: $A\Perp^d_G B\given C \implies A\Indep_{P_M} B \given C$ for all sets of variables $A, B, C$.} with causal graph $G$,
	then the following are equivalent:
	\vspace*{-5pt}
	\begin{enumerate}[label=\roman*)]
		\setlength{\itemsep}{-4pt}
		\item For every $M\in \Mcal'_G$ there exists an observationally correct forecast of $Y|A$;
		\item For every $M\in \Mcal'_G$ there exists a correct forecast of $Y|A$;
		\item $Y|A$ is forecast-invariant for all $M \in \Mcal'_G$;
		\item $Y\Perp^d_G F\given A$.\footnote{We can have a d-connection $Y\nPerp_{G(M)}^d F\given A$ but forecast invariance in $M$ due to faithfulness violations after intervention $\Do(F)$ \citep{boeken2025are}. However, forecast invariance \emph{for all} $M\in\Mcal'_G$ implies $Y\Perp_G^d F\given A$.}
	\end{enumerate}
\end{restatable}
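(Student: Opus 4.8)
The plan is to prove the cycle of implications $iv) \Rightarrow iii) \Rightarrow ii) \Rightarrow i) \Rightarrow iv)$, which I'll organize as follows.

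The implication $iv) \Rightarrow iii)$ is essentially immediate from the Markov property: if $Y \Perp^d_G F \mid A$, then for every $M \in \Mcal'_G$ we have $Y \Indep_{P_M} F \mid A$, and interpreting this in the interventional model (where $F$ is set by $\Do(F)$) gives $P_M(Y \mid A, \Do(F)) = P_M(Y \mid A)$, which is precisely forecast-invariance. One subtlety here is that d-separation statements in $G$ need to be read off correctly in the mutilated graph corresponding to $\Do(F)$; since $F$ has no parents in $\Mcal_{\mathrm{pc}}$, the intervention doesn't remove any edges, so this is clean. The implication $iii) \Rightarrow ii)$ follows from the remark already made in the text: forecast-invariance makes $P_M(Y \mid A)$ a fixed point of the relevant map, hence a correct forecast (both observationally and counterfactually, since $P_M(Y \mid A, \Do(F))$ doesn't depend on $F$ at all, including at counterfactual values of $A$). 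And $ii) \Rightarrow i)$ is trivial since correctness implies observational correctness by Definition~\ref{def:correct_conditional_forecast}.

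The real work is $i) \Rightarrow iv)$, proved by contraposition: assuming $Y \nPerp^d_G F \mid A$, I would construct a specific model $M \in \Mcal'_G$ admitting no observationally correct forecast. The idea is to exploit an active path from $F$ to $Y$ given $A$ to build a ``self-defeating'' mechanism analogous to the $P_0, P_1$ example in the text. Concretely: pick two distinct distributions $P_0 \neq P_1$ in $\Pcal_\Ycal$ on a two-point sub-support $\{y_0, y_1\}$, and design the structural mechanisms along the active path so that the forecast value (specifically, some coordinate of $F$, e.g.\ whether $F(Y \mid A = a^*) $ for a fixed $a^*$ assigns probability $> 1/2$ to $y_1$) propagates through the path and flips the induced distribution of $Y$ given $A = a^*$ to the ``opposite'' of what was forecast. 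For the observed value $a^*$ to actually have positive probability under $\Do(F)$, I need to ensure the mechanism for $A$ puts mass on $a^*$ regardless of $F$; this is arrangeable since we have freedom in the model. Then for every forecast $F$, the component $F(Y \mid A = a^*)$ fails to equal $P_M(Y \mid A = a^*, \Do(F))$, so no observationally correct forecast exists.

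**The main obstacle** I anticipate is handling the general shape of the active path: it may be a chain, a fork, or contain colliders that are ancestors of $A$, and it may pass through $A$ itself or through latent confounders (the bidirected edge $A \leftrightarrow Y$ in Figure~\ref{fig:performative_conditional_forecast}). The cleanest route is probably to first reduce to a shortest active path and argue by its structure: a collider on the path must have a descendant in the conditioning set $\{A\}$, which constrains things enough that one can still route the ``signal'' from $F$ to $Y$ by choosing the mechanisms at non-collider nodes to be (near-)deterministic copy operations and the collider mechanisms to be informative when conditioned on $A$. I would also need to double check that the constructed $M$ genuinely lies in $\Mcal'_G$ (only the Markov property is required, not faithfulness, so I merely need the distribution to factorize according to $G$ — deterministic mechanisms are fine) and in $\Mcal_{\mathrm{pc}}$ (graph is a subgraph of Figure~\ref{fig:performative_conditional_forecast} after merging, which holds as long as I only use edges present in $G$). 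The measure-theoretic care needed because $\Pcal_\Ycal$ may be large is sidestepped by working entirely within the two-point support $\{y_0, y_1\}$, reducing everything to a finite/Bernoulli computation.
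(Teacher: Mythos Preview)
Your cycle $iv) \Rightarrow iii) \Rightarrow ii) \Rightarrow i) \Rightarrow iv)$ and your handling of the first three implications match the paper (which phrases $iv) \Rightarrow iii)$ via do-calculus rule~3 rather than the raw Markov property, a cosmetic difference).

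For the contrapositive $\neg iv) \Rightarrow \neg i)$ both you and the paper build a self-defeating model along an active path, but the paper takes a shortcut you do not: it asserts that $F \nPerp^d_G Y \mid A$ yields a \emph{directed} path $F \to V_1 \to \cdots \to V_n \to Y$ with each $V_i \notin A$. Granting this, the construction is much cleaner than what you sketch: deterministic $\{0,1\}$ copy maps along the chain, with $V_1$ switching on whether $P_1$ appears anywhere in $F(Y\mid A)$, and --- the simplification you miss --- every other variable, \emph{including all of $A$}, is given an \emph{independent} distribution. Then $P_M(Y\mid A{=}a,\Do(F))$ does not depend on $a$ at all, so there is no need to isolate a special $a^*$ and engineer positive mass on it; the forecast is observationally wrong at every $a$ simultaneously, and Markov compatibility is immediate. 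Your concern about colliders, forks and bidirected edges is not idle in full generality (e.g.\ a graph $F\to W\leftarrow L\to Y$ with $W\to A$ has $F\nPerp^d_G Y\mid A$ but no directed $F$-to-$Y$ path), so your shortest-active-path plan buys robustness that the paper's argument, as written, does not explicitly supply; if you prefer the paper's simpler route, you should state and justify the directed-path reduction from the structural constraints of $\Mcal_{\mathrm{pc}}$ rather than take it for granted.
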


\section{Eliciting correct performative forecasts with proper scoring rules}\label{sec:scoring}

Without appropriate incentives, forecasters may have motives to report an incorrect forecast. To address this, mechanisms must be designed to elicit correct forecasts: that is, to set the forecaster’s incentives so that correct reporting is their optimal strategy. This mechanism is typically modelled as a \emph{principal} eliciting a probabilistic forecast from a forecaster. They agree beforehand on a \emph{scoring rule} $S: \Pcal_\Ycal \times \Ycal \to \RR$ which determines the payout of the forecaster: after each forecast $F$ and observation $y$, the forecaster receives $S(F, y)$ from the principal. A scoring rule is called proper if the forecaster maximizes the \emph{expected score} $\ol{S}(F, P) := \int S(F, y)P(\diff y)$ by reporting a correct forecast:
\begin{definition}\label{def:proper}
	Scoring rule $S$ is \emph{proper} relative to $\Pcal_\Ycal$ if for all $F, P^* \in \Pcal_\Ycal$ we have $\ol{S}(F, P^*) \leq \ol{S}(P^*, P^*)$, and \emph{strictly proper} if the inequality is strict when $F$ is incorrect.
\end{definition}
We sometimes refer to such a scoring rule as \emph{(strictly) proper in the classical sense}.
Well known examples of classical scoring rules are the \emph{Bregman score} $S(F, y) := \varphi'(f(y)) + \int (\varphi(f(x)) - f(x)\varphi'(f(x)))\diff x$ for $\varphi : \RR^+ \to \RR$ convex and differentiable and where $f$ is a density of $F$. The Bregman score is proper, and strictly proper if $\varphi$ is strictly convex.
By picking $\varphi(f) = f\log f$, the Bregman score reduces to the \emph{log-score} $S(F, y) = \ln(f(y))$.
If $\Ycal = \{0,1\}$, by picking $\varphi(f) = f^2$ and setting $f := F(Y=1)$ the Bregman score reduces to the \emph{Brier score} $S(F, y) = -(f-y)^2$ \citep{brier1950verification}. If $\Ycal = \RR^d$, another example is the \emph{energy score} $S(F, y) = \EE_{Y'\sim F}\|Y'-y\| - \frac{1}{2}\EE_{Y,Y'\sim F}\|Y-Y'\|$, where $Y, Y'$ are independent random variables, and the expectations are to be computed analytically of via a sampling scheme \citep{szekely2013energy}. The energy score is strictly proper with respect to the class of distributions $P(Y)$ with $\EE_P\|Y\|$ finite.
For other examples we refer to \cite{grunwald2004game} and \cite{gneiting2007strictly}.

As straightforward extension of scoring rules to allow for conditional forecasts $F\in \Pcal_{\Acal\to\Ycal}$, we refer to a map $S: \Pcal_{\Acal\to \Ycal}\times \Acal\times \Ycal \to \RR$ as a \emph{conditional scoring rule}; after forecasting $F$ and observing $a$ and $y$, the forecaster receives $S(F, a, y)$ from the principal.\footnote{To our knowledge, the only related literature on scoring rules for conditional forecasts $F\in \Pcal_{\Acal\to\Ycal}$ are \cite{kadane1980interactive,schervish2009proper} in the non-performative setting, who consider conditional scoring rules of the form $S(F(Y\given A=a), y)$, and \cite{othman2010decision,chen2014eliciting,oesterheld2020decision} who consider scoring rules for conditional forecasts in the performative setting of the form $S(F, y)$, $S(F, P_M(A), a, y)$ and $S(F(Y\given A=a), y)$ respectively.} Any `classical' scoring rule $S'$ induces a conditional scoring rule $S$ via $S(F, a, y) := S'(F(Y\given A=a), y)$; if we consider a classical scoring rule for conditional forecasts, we always implicitly consider this induced conditional scoring rule.
We define properness of conditional scoring rules for non-performative forecasts in Appendix \ref{app:conditional_forecasts_proper}; this is generalised by Definition \ref{def:performative_proper} in Section \ref{sec:proper_scoring_rule_def} below.

Most existing work on scoring rules assumes that the forecast does not affect the predicted variable, with an explicit mention in \cite{mccarthy1956measures}, assumption (iv): \emph{``We assume that neither the forecaster nor the [principal] can influence the predicted event [...]''}.
Our main interest lies in dropping this assumption, as is often required in practice \citep{perdomo2020performative,boeken2024evaluating}.

\subsection{Defining properness of scoring rules for performative forecasts}\label{sec:proper_scoring_rule_def}
To extend the theory of proper scoring rules to the performative setting we adhere to the same rationale as in the classical setting: if the forecaster knows that $M$ is the true model, then the expected score should be maximised at a correct forecast, where now the expected score of a forecast $F$ will be considered with respect to $P_M(Y\given \Do(F))$.
Given a scoring rule $S: \Pcal_\Ycal \times \Ycal \to \RR$, a marginal forecast $F \in \Pcal_\Ycal$ and causal model $M\in\Mcal_{\mathrm{p}}$ the expected score is
\begin{align*}
	\ol{S}_{\mathrm{p}}(F, M) & := \int S(F, y)P_M(\diff y \given \Do(F)).
\end{align*}
For a conditional scoring rule $S: \Pcal_{\Acal\to \Ycal} \times \Acal \times \Ycal \to \RR$, conditional forecast $F(Y\given A) \in \Pcal_{\Acal \to \Ycal}$ and model $M\in\Mcal_{\mathrm{pc}}$ the expected score is
\begin{align*}
	\ol{S}_{\mathrm{pc}}(F, M) & := \int S(F, a, y)P_M(\diff a, \diff y \given \Do(F)).
\end{align*}
Recall Definitions \ref{def:correct_forecast} and \ref{def:correct_conditional_forecast} of the various types of correctness of performative (conditional) forecasts.

\begin{definition}\label{def:performative_proper}
	For marginal forecasts, a scoring rule $S$ is \emph{proper} relative to $\Mcal' \subseteq \Mcal_{\mathrm{p}}$ if $\ol{S}_{\mathrm{p}}$ is maximised at a correct forecast, and \emph{strictly proper} if all maximisers are correct. We call a conditional scoring rule
	\vspace*{-5pt}
	\begin{itemize}
		\setlength{\itemsep}{-4pt}
		\item \emph{observationally (strictly) proper} relative to $\Mcal' \subseteq \Mcal_{\mathrm{pc}}$ if $\ol{S}_{\mathrm{pc}}$ is (only) maximised at observationally correct forecasts for all $M\in \Mcal'$;
		\item \emph{counterfactually (strictly) proper} relative to $\Mcal' \subseteq \Mcal_{\mathrm{pc}}$ if $\ol{S}_{\mathrm{pc}}$ is (only) maximised at counterfactually correct forecasts for all $M\in \Mcal'$;
		\item \emph{(strictly) proper} relative to $\Mcal' \subseteq \Mcal_{\mathrm{pc}}$ if it is observationally and counterfactually (strictly) proper, that is, if $\ol{S}_{\mathrm{pc}}$ is (only) maximised at correct forecasts for all $M\in \Mcal'$.\footnote{In a decision-theoretic context, \cite{othman2010decision} and \cite{chen2011information} refer to scoring rules which are both observationally strictly proper and counterfactually proper as `quasi-strictly proper' scoring rules.}
	\end{itemize}
\end{definition}

If we consider a class of models $\Mcal'$ where $F$ does not affect $Y$, then Definition \ref{def:performative_proper} is equivalent to Definition \ref{def:proper} and its conditional version (Appendix \ref{app:conditional_forecasts_proper}).
For a scoring rule to be (observationally) proper with respect to $\Mcal'$, it is required that for every $M\in \Mcal'$ there \emph{exists} an (observationally) correct forecast $P^*$, signifying the subtleties of Section \ref{sec:correct_forecasts}: \textbf{if one only makes assumptions about the causal graph, Theorem \ref{thm:forecastable_iff_separating} implies that the target variable must be forecast-invariant for a scoring rule to be proper.}
Note that if a correct forecast exists, any constant scoring rule is proper; throughout we only consider non-constant scoring rules.

\subsection{Impossibility of eliciting correct performative forecasts with classical scoring rules}
Few examples of conditional scoring rules exist in the literature; the most straightforward examples are conditional scoring rules $S$ induced by classical scoring rules $S'$ through $S(F, a, y) = S'(F(Y\given A=a), y)$. In this section we investigate whether one can expect these induced conditional scoring rules to be proper in the performative sense, even if they are strictly proper in the classical sense.

As remarked in the previous section, forecast-invariance is necessary for a scoring rule to be proper; however, it is not sufficient. Typically, the easier the forecasting problem, the higher the expected score of a correct forecast. If traffic jams are hard to predict on a highway called `Route A', then it can be beneficial for the forecaster to incorrectly predict a terrible traffic jam on Route A so that all cars will take an alternative, easier-to-predict Route B; the forecaster will benefit from a correct forecast on Route B, and will never be punished for the incorrect forecast on Route A. \textbf{In this mechanism the scoring rule is observationally strictly proper but not counterfactually proper}. The following example is adapted from \cite{othman2010decision}:

\begin{example}\label{ex:not_proper}
	Let $\Acal = \Ycal = \{0,1\}$ and $M$ be such that $Y|A$ is forecast-invariant, with $P_M(Y=1\given A=0)=0.5, P_M(Y=1\given A=1)=0.25$ and the `decision rule' $A=\argmax_{a}F(Y=1\given A=a)$. If the forecaster reports the correct forecast $A=0$ is chosen and the expected Brier score is $-0.5\cdot(0.5)^2 - 0.5 \cdot(0.5)^2 = -0.25$, and if the forecaster reports $F(Y=1\given A=0)=0.2, F(Y=1\given A=1)=0.25$ then $A=1$ is chosen the expected score is $-0.25\cdot(0.75)^2 - 0.75\cdot(0.25)^2 \approx -0.19$. The scoring rule is observationally strictly proper since for the observed value $A=1$ the score is maximised at a correct forecast, but it is not counterfactually proper since it pays off to misreport for the value $A=0$, which is not observed.
\end{example}

Inspired by the self-defeating prophecy, we can construct even more pathological examples where the scoring rule is neither observationally nor counterfactually proper. Consider a mechanism $P_M(A\given \Do(F))$ which can pick between two values of $A$: one for which $Y$ is easier to predict, giving a higher expected score, and one for which $Y$ is harder to predict, giving a lower expected score. If for any (close to) correct forecast the mechanism $P_M(A\given \Do(F))$ picks the harder-to-predict action, it is beneficial for the forecaster to misreport since this induces an easier-to-predict action, yielding a higher expected score. Hence, \textbf{in this mechanism the scoring rule is observationally and counterfactually improper}. Notably, the mechanism $P_M(A\given \Do(F))$ can be, but need not be, deterministic. In Appendix \ref{app:not_proper} we provide examples and 3D-plots of this phenomenon.

The following theorem shows formally that in general performative settings, non-constant proper scoring rules don't exist. The proof relies on the phenomenon of the self-defeating prophecy, as just explained.
\begin{restatable}[]{theorem}{nonexistenceobservationallyproper}\label{thm:nonexistence_observationally_proper}
	Let $\Mcal'\subseteq \Mcal_{\mathrm{pc}}$ be the set of models such that $Y|A$ is forecast-invariant, where $P_M(A\given \Do(F))$ has full support for all $F\in \Pcal_\Ycal$ and all $M\in \Mcal'$, or where $P_M(A\given \Do(F))$ is deterministic for all $M\in \Mcal'$. If $S$ is a scoring rule with respect to $\Pcal_\Ycal$ such that there are $P_0, P_1\in\Pcal_\Ycal$ and forecast $\tilde{F}\in \Pcal_\Ycal$ with $\tilde{F}\neq P_1$ and $\ol{S}(P_0, P_0) < \ol{S}(\tilde{F}, P_1) < \ol{S}(P_1, P_1)$, then $S$ is not observationally proper for predicting $Y|A$ in $\Mcal'$.
\end{restatable}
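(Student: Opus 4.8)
The plan is to build, for the given model class $\Mcal'$, one model $M\in\Mcal'$ on which the induced conditional scoring rule $S(F,a,y)=S(F(Y\given A=a),y)$ is strictly beaten at every observationally correct forecast — this is the ``self-defeating prophecy'' announced in the text. The idea: arrange two values of $\Acal$, one ``cheap'' slice whose correct forecast pays only $\ol{S}(P_0,P_0)$ and one ``valuable'' slice whose correct forecast pays $\ol{S}(P_1,P_1)$, and let the mechanism $P_M(A\given\Do(F))$ steer the observed $A$ onto the cheap slice precisely when the forecaster is correct on the valuable slice. Then, to remain on the valuable slice the forecaster must misreport there, which breaks observational correctness while still earning more than the (observationally correct) fully-correct forecast, since even the misreport $\tilde F$ satisfies $\ol{S}(\tilde F,P_1)>\ol{S}(P_0,P_0)$.

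Concretely, I would fix two values of $\Acal$, written $0$ and $1$, and define a forecast-invariant target by $P_M(Y\given A=1)=P_1$, $P_M(Y\given A=0)=P_0$ (and $P_M(Y\given A=a)=P_0$ for any further $a\in\Acal$), realised e.g.\ by an SCM in which $Y$ depends only on $A$ through an exogenous noise variable; then $Y\Perp^d F\given A$ holds, $M$ is forecast-invariant, and its graph is a subgraph of Figure~\ref{fig:performative_conditional_forecast}. For a parameter $\epsilon\in[0,1)$ set
\[
	P_M(A=1\given\Do(F))=\begin{cases}1-\epsilon,& F(Y\given A=1)\neq P_1,\\ \epsilon,& F(Y\given A=1)=P_1,\end{cases}
\]
with the remaining mass on $A=0$ (and, if $\Acal$ has further values, a vanishing share spread over them; being forecast-invariant with distribution $P_0$ they never enter the computations below). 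With $\epsilon=0$ this mechanism is deterministic and with $\epsilon\in(0,1)$ it has full support, so $M\in\Mcal'$ whichever of the two regimes of the theorem defines $\Mcal'$: take $\epsilon=0$ in the deterministic case and $\epsilon>0$ small otherwise.

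I would then argue as follows. By forecast-invariance, $\ol{S}_{\mathrm{pc}}(F,M)=\sum_a P_M(A=a\given\Do(F))\,\ol{S}(F(Y\given A=a),P_M(Y\given A=a))$. Since any $a$ with $P_M(A=a\given\Do(F))>0$ forces $F(Y\given A=a)=P_M(Y\given A=a)$, the observationally correct forecasts are exactly those agreeing with the forecast-invariant truth $F^*$ on the slices they give positive mass: when $\epsilon>0$ this is $F^*$ itself; when $\epsilon=0$, having $A=1$ observed requires $F(Y\given A=1)\neq P_1$, which by construction makes $A=0$ observed instead, so the observed slice is $A=0$ and $F(Y\given A=0)=P_0$ is forced — again $F^*$ on the observed slice. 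Either way $\ol{S}_{\mathrm{pc}}(F^*,M)=(1-\epsilon)\,\ol{S}(P_0,P_0)+\epsilon\,\ol{S}(P_1,P_1)$, because $F^*(Y\given A=1)=P_1$ puts mass $\epsilon$ on the valuable slice. Now take $\hat F$ equal to $F^*$ except $\hat F(Y\given A=1)=\tilde F$; since $\tilde F\neq P_1$ the mechanism puts mass $1-\epsilon$ on $A=1$, so $\ol{S}_{\mathrm{pc}}(\hat F,M)=(1-\epsilon)\,\ol{S}(\tilde F,P_1)+\epsilon\,\ol{S}(P_0,P_0)$. The inequality $\ol{S}_{\mathrm{pc}}(\hat F,M)>\ol{S}_{\mathrm{pc}}(F^*,M)$ is equivalent to $(1-\epsilon)\bigl(\ol{S}(\tilde F,P_1)-\ol{S}(P_0,P_0)\bigr)>\epsilon\bigl(\ol{S}(P_1,P_1)-\ol{S}(P_0,P_0)\bigr)$; both parenthesised terms are positive by the hypothesis, and as $\epsilon\downarrow 0$ the left side tends to $\ol{S}(\tilde F,P_1)-\ol{S}(P_0,P_0)>0$ while the right side tends to $0$, so the inequality holds at $\epsilon=0$ and, by continuity, for all sufficiently small $\epsilon>0$. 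Since $\hat F$ is not observationally correct (it misreports on the observed slice $A=1$), no observationally correct forecast maximises $\ol{S}_{\mathrm{pc}}(\cdot,M)$, so $S$ is not observationally proper in $\Mcal'$.

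The substance here is the construction, not the algebra: one has to see that the mechanism should react adversarially to correctness on the high-value slice, so that observational correctness and score maximisation become incompatible. Everything else — verifying $M\in\Mcal'$ (forecast-invariance plus the support condition), pinning down $F^*$ as the unique observationally correct forecast, and the $\epsilon\downarrow 0$ comparison — is routine. The one bookkeeping point is the constraint $P_M(Y)\in\Pcal_\Ycal$ built into the definition of $\Mcal_{\mathrm p}$: for $\epsilon=0$ one takes the model's forecast to be the point mass at $F^*$, giving $P_M(Y)=P_0\in\Pcal_\Ycal$; for $\epsilon>0$ one either assumes $\Pcal_\Ycal$ convex or restricts $\Mcal'$ to models whose outcome marginal lies in $\Pcal_\Ycal$, neither of which weakens the conclusion.
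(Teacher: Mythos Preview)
Your proof is correct and follows essentially the same construction as the paper: build a forecast-invariant model with a ``cheap'' slice $P_0$ and a ``valuable'' slice $P_1$, and let the mechanism $P_M(A\given\Do(F))$ steer the observed $A$ adversarially so that correctness on the valuable slice is punished. The only cosmetic differences are that the paper triggers the switch on the single forecast $F(Y\given A=1)=\tilde F$ rather than on $F(Y\given A=1)\neq P_1$, and handles the full-support case via an explicit bound on $Q(A=1)$ rather than your $\epsilon\downarrow 0$ argument; both yield the same comparison $\ol S(P_0,P_0)<\ol S(\tilde F,P_1)$.
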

Notably, this scoring rule is also not observationally proper for any larger class of models, for example when one does not require $Y|A$ to be forecast-invariant or if no assumptions are made on the mechanism $P_M(A\given \Do(F))$.
We will cover two solutions which go beyond classical scoring rules: in Section \ref{sec:bdt} we consider a decision-theoretic class of models with a particular deterministic mechanism $P_M(A\given \Do(F))$ (the same for all $M$) for which proper conditional scoring rules exist, and in Section \ref{sec:divergence} we relax the setting by allowing the principal to score forecasts using estimated properties of the distribution $P_M(A, Y\given \Do(F))$. Another notable solution is given by \cite{hudson2025joint}, who relaxes the setting by considering multiple forecasters who compete in a zero-sum game.

\section{Properness and incentive compatibility in a decision-theoretic setting}\label{sec:bdt}
Often, the forecast is provided to an agent who acts based on the forecasted information.\footnote{Note that this agent might be the principal herself.}
The quality and reliability of the forecast thus have a direct impact on the agent's behaviour and, consequently, on outcomes which may be of interest to the principal. More specifically, given a conditional forecast $F$, let the agent take an action which maximizes the expectation of a utility $U(a, y)$, so we have $A = a_F := \argmax_{a\in \Acal} \int U(a, y)F(\diff y\given a)$.\footnote{The action $a_F$ is often referred to as the \emph{Bayes act}; see also \cite{dawid2007geometry,brehmer2020properization,dawid2021decisiontheoretic}.} This requires the forecast $F(Y\given A)$ to be a forecast of the \emph{causal effect} $P_M(Y\given \Do(A))$, which holds in our setting since we assume that there is no confounding between $A$ and $Y$: let $\Mcal_{\mathrm{dt}} \subseteq \Mcal_{\mathrm{pc}}$ be the set of causal models $M$ with $P_M(A\given \Do(F)) = \delta_{a_F}$.

\begin{definition}\label{def:incentive_compatible}
	Given a utility $U$ and a set of causal models $\Mcal' \subseteq \Mcal_{\mathrm{pc}}$, we call a conditional scoring rule $S$ \emph{incentive compatible}\footnote{We use the term `incentive compatibility' different than \cite{chen2011decision}: they let it refer to strict properness of a conditional scoring rule, which implies our notion of incentive compatibility.} with $U$ if for all $M\in \Mcal'$ we have $\argmax_F\ol{S}_{\mathrm{pc}}(F, M) = \argmax_F \int U(a, y)P_M(\diff a, \diff y \given \Do(F))$.
\end{definition}

The goal of the agent is to maximise its expected utility, and the goal of the forecaster is to maximise its expected score. As also noted by \cite{othman2010decision} and by \cite{vanamsterdam2025when} for marginal forecasts, these two incentives don't necessarily align:
\begin{example}\label{ex:not_incentive_compatible}
	Consider Example \ref{ex:not_proper}, interpreted as a forecast being given to an agent with utility $U(a,y) = y$; then indeed $A = a_F = \argmax_a F(Y=1\given A=a)$. The optimal action for the agent is $a^* = 0$, but to maximise the score the forecaster reports $F(Y=1\given A=0)=0.2$ and $F(Y=1\given A=1)=0.25$, forcing the agent to take the suboptimal action $a_F = 1$. Despite the forecast being observationally correct, it is not incentive-compatible with $U$.
\end{example}

It is not hard to see that every strictly proper conditional scoring rule is incentive compatible with any utility. Generalising the result of \cite{othman2010decision} (Theorem 4) to our setting, we moreover have that in $\Mcal_{\mathrm{dt}}$, proper conditional scoring rules must have the form $S(F, a, y) = S(F(Y\given A=a_F), a_F, y)$, so they may only depend on forecasts for unobserved actions through $a_F$, and they can never be counterfactually strictly proper.

Nevertheless, proper, incentive compatible conditional scoring rules exist; we will later show that they can even be made observationally strictly proper and incentive compatible. In the non-performative setting where marginal forecasts $F(Y)$ affects $A$ but where $A$ does not affect $Y$, it is well known that the \emph{utility score} $S(F, y) := U(a_F, y)$ is proper \citep{dawid2007geometry,brehmer2020properization}. In the performative setting with conditional forecast, we can straightforwardly consider the conditional scoring rule $S(F, a, y) = U(a_F,y)$; since the expected score then equals the expected utility, it is automatically incentive-compatible -- not only in $\Mcal_{\mathrm{dt}}$, but even for all $M\in \Mcal_{\mathrm{pc}}$, so for any (possibly stochastic) policy $P_M(A\given \Do(F))$. As shown by \cite{oesterheld2020decision}, this utility score is proper in $\Mcal_{\mathrm{dt}}$:

\begin{restatable}[]{theorem}{utilityscoreproper}\label{thm:utility_score_proper}
	The utility score is incentive-compatible with $U$, and proper if $Y|A$ is forecast-invariant in $\Mcal' \subseteq \Mcal_{\mathrm{dt}}$.
	If for a given causal graph $G$ one considers the class of models $\Mcal_G' \subseteq \Mcal_{\mathrm{dt}}$ which are consistent with $G$, then forecast invariance of $Y|A$ is \emph{necessary} for the utility score to be proper.
\end{restatable}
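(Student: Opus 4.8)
The plan is to prove the three assertions — incentive compatibility, properness under forecast-invariance, and necessity of forecast-invariance — in that order, all resting on the observation that on $\Mcal_{\mathrm{dt}}$ the expected utility score collapses onto the agent's expected utility. Fix $M\in\Mcal'\subseteq\Mcal_{\mathrm{dt}}$. Because $P_M(A\given\Do(F))=\delta_{a_F}$, we have $A=a_F$ almost surely under $P_M(\cdot\given\Do(F))$, and since $S(F,a,y)=U(a_F,y)$ does not depend on $a$,
\[
\ol{S}_{\mathrm{pc}}(F,M)=\int U(a_F,y)\,P_M(\diff y\given\Do(F))=\int U(a,y)\,P_M(\diff a,\diff y\given\Do(F)).
\]
Thus $\ol{S}_{\mathrm{pc}}(\cdot,M)$ equals the expected-utility functional $F\mapsto\int U(a,y)\,P_M(\diff a,\diff y\given\Do(F))$ as a function of $F$, so it has exactly the same maximisers; this is incentive compatibility with $U$.

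Now assume moreover that $Y|A$ is forecast-invariant in $\Mcal'$, so that $P^*:=P_M(Y\given A)$ is the unique correct forecast (remark after Definition \ref{def:forecast_invariance}). Substituting $A=a_F$ a.s.\ together with forecast-invariance into the display gives $\ol{S}_{\mathrm{pc}}(F,M)=\int U(a_F,y)\,P_M(\diff y\given A=a_F,\Do(F))=\int U(a_F,y)\,P^*(\diff y\given A=a_F)$, which is at most $\max_{a\in\Acal}\int U(a,y)\,P^*(\diff y\given A=a)=\ol{S}_{\mathrm{pc}}(P^*,M)$, the maximum over $a$ being attained at the Bayes act of $P^*$. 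Hence $\ol{S}_{\mathrm{pc}}(\cdot,M)$ is maximised at the correct forecast $P^*$, which is exactly properness.

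For necessity, fix a graph $G$, let $\Mcal_G'\subseteq\Mcal_{\mathrm{dt}}$ be the compatible models, and suppose the (non-constant) utility score is proper relative to $\Mcal_G'$, so that by Definition \ref{def:performative_proper} every $M\in\Mcal_G'$ admits an observationally correct forecast. Assume for contradiction that $Y|A$ is not forecast-invariant in $\Mcal_G'$: some $M_0\in\Mcal_G'$ has $P_{M_0}(Y\given A,\Do(F))$ depending on $F$. As $F$ is a root node, $\Do(F)$ coincides with conditioning on $F$, so the Markov property forces $Y\nPerp^d_G F\given A$; since $G$ is a subgraph of Figure \ref{fig:performative_conditional_forecast} with no $A$--$Y$ confounding in the decision-theoretic class, the only active path is the direct edge, whence $F\to Y\in G$ (and $F\to A\in G$, since otherwise compatibility would force $a_F$ constant and hence the utility score constant). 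Pick $P_0\neq P_1\in\Pcal_\Ycal$ and define $M\in\Mcal_{\mathrm{dt}}$ with policy $A=a_F$ and, for every $a$,
\[
P_M(Y\given A=a,\Do(F))=\begin{cases}P_0&\text{if }F(Y\given A=a_F)=P_1,\\ P_1&\text{otherwise}.\end{cases}
\]
Its $Y$-mechanism depends on $F$ (allowed by $F\to Y\in G$) but not on $a$, so $M$ is Markov-compatible with $G$ and lies in $\Mcal_G'$; yet it has no observationally correct forecast, since $F(Y\given A=a_F)=P_M(Y\given A=a_F,\Do(F))$ would require $F(Y\given A=a_F)$ to equal $P_0$ when it is $P_1$ and $P_1$ otherwise. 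This contradicts properness, so $Y|A$ is forecast-invariant in $\Mcal_G'$.

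The first two parts are routine once the collapse of the expected utility score to expected utility on $\Mcal_{\mathrm{dt}}$ is noticed. The main obstacle is the necessity part: one must produce the self-defeating-prophecy witness \emph{within} $\Mcal_G'\subseteq\Mcal_{\mathrm{dt}}$, since the obstruction constructed for Theorem \ref{thm:forecastable_iff_separating} need not carry the deterministic Bayes-act policy — so the work is the graph analysis showing that imposing $P_M(A\given\Do(F))=\delta_{a_F}$ while keeping the $Y$-mechanism constant in $A$ still respects $G$ (using $F\to Y,\ F\to A\in G$ and the absence of $A$--$Y$ confounding). Handling an $A\leftrightarrow Y$ edge that Figure \ref{fig:performative_conditional_forecast} permits is an analogous variant, using the collider path $F\to A\leftrightarrow Y$ opened by conditioning on $A$ in place of the direct edge.
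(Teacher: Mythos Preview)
Your argument is correct and mirrors the paper's: incentive compatibility and properness are derived identically from the collapse of the expected utility score to expected utility on $\Mcal_{\mathrm{dt}}$, and necessity proceeds by producing a self-defeating prophecy in $\Mcal_G'$ that admits no observationally correct forecast. The paper is terser on necessity---it simply invokes Theorem~\ref{thm:forecastable_iff_separating} and notes parenthetically that its counterexample ``can easily be altered to let $A=a_F$ be the Bayes act'' so as to land in $\Mcal_{\mathrm{dt}}$---so your explicit construction and accompanying graph analysis (including the $A\leftrightarrow Y$ remark) fill in precisely what the paper leaves to the reader.
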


A main restriction of the utility score is that it is not observationally \emph{strictly} proper: there can be multiple observationally incorrect forecasts which maximise the score. However, in the following we show that there does exist a conditional scoring rule which is incentive-compatible and observationally strictly proper. If the principal knows that the expected utility between any two actions differs with at least $\Delta > 0$, then the principal can elicit incentive-compatible and observationally correct forecasts by first scoring using the utility score to ensure that the optimal action $a^*$ is taken, and subsequently add a scaled strictly proper scoring rule $S'$ for $F(Y\given A=a^*)$ to ensure that the maximum is uniquely attained at the observationally correct forecast $P_M(Y\given A=a^*)$, where the magnitude of $S'$ is bounded by $\Delta$ to ensure that it does not incentivise a forecast which induces an action different than $a^*$.
This utility gap $\Delta$ can for example be assumed if $A$ is a binary indicator of a `high-risk, high-reward' action, like a risky operation or a bold marketing campaign.
\begin{restatable}[]{theorem}{utilityscorestrictlyproper}\label{thm:utility_score_strictly_proper}
	Let $Y|A$ be forecast-invariant in $\Mcal' \subseteq \Mcal_{\mathrm{dt}}$, let $U \in [0,1]$, let $S' \in [0,1]$ be a proper scoring rule in the classical sense and let there be a $\Delta > 0$ such that $\min\left\{\left|E_M(U(a, Y)\given a) - E_M(U(a', Y)\given a')\right|:a\neq a'\right\} > \Delta$ for all $M\in \Mcal'$, then $S(F, a, y) := U(a_F, y) + \Delta\cdot S'(F(Y\given A=a_F), y)$ is proper and incentive-compatible with $U$. If $S'$ is strictly proper in the classical sense, then $S$ is observationally strictly proper.
\end{restatable}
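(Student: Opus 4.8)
The plan is to reduce the claim to two separate arguments, one for incentive compatibility / properness and one for observational strict properness, exploiting the way the score $S(F,a,y) = U(a_F,y) + \Delta\cdot S'(F(Y\given A=a_F),y)$ decomposes. First I would observe that since $M \in \Mcal' \subseteq \Mcal_{\mathrm{dt}}$, the policy is deterministic with $P_M(A\given\Do(F)) = \delta_{a_F}$, so the expected score simplifies to
\begin{equation*}
	\ol{S}_{\mathrm{pc}}(F, M) = E_M\big(U(a_F, Y)\given a_F\big) + \Delta\cdot \ol{S'}\big(F(Y\given A=a_F), P_M(Y\given A=a_F)\big),
\end{equation*}
using forecast-invariance of $Y|A$ to replace $P_M(Y\given A=a_F,\Do(F))$ by $P_M(Y\given A=a_F)$. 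The key structural fact is that the induced action $a_F$ depends only on the forecast, and for any fixed target action $a$ the second term is maximised (over all forecasts $F$ with $a_F = a$) precisely by forecasts with $F(Y\given A=a) = P_M(Y\given A=a)$, by (classical) properness of $S'$.

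Next I would handle incentive compatibility and properness together. Write $g(a) := E_M(U(a,Y)\given a)$ for the agent's true expected utility of action $a$, and let $a^* := \argmax_a g(a)$, which is unique by the gap assumption. For any forecast $F$, bounding the second term by $\Delta\cdot S' \in [0,\Delta]$ and using that $S'\ge 0$,
\begin{equation*}
	\ol{S}_{\mathrm{pc}}(F, M) \le g(a_F) + \Delta \le g(a^*) + \Delta,
\end{equation*}
but when $a_F \ne a^*$ the gap assumption gives $g(a_F) \le g(a^*) - \Delta$, so $\ol{S}_{\mathrm{pc}}(F,M) \le g(a^*) \le g(a^*) + \Delta \cdot \ol{S'}(\cdot)$ cannot be improved past what a forecast inducing $a^*$ achieves; more carefully, any forecast inducing $a^*$ whose $a^*$-conditional equals $P_M(Y\given A=a^*)$ attains value $g(a^*) + \Delta\cdot\max \ol{S'}$, strictly exceeding any value obtainable with $a_F\ne a^*$ (here I need that $\ol{S'}(P^*,P^*) \ge 0$, which can be arranged since $S'\in[0,1]$, or folded into the constant). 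Hence every maximiser induces $a^* = a_F$; such forecasts are observationally correct for the single observed action $a^*$ exactly when they also maximise the $S'$ term, which (non-strict) properness of $S'$ guarantees is possible — so a maximiser exists and all maximisers induce $a^*$, giving properness. Incentive compatibility then follows because $\argmax_F \int U(a,y)P_M(\diff a,\diff y\given\Do(F)) = \{F : a_F = a^*\}$ as well, and both argmax sets are contained in $\{F: a_F=a^*\}$; one should check the containment is actually an equality at the level needed — the utility-score part already pins down $a^* $, so the combined score's maximisers are $\{F : a_F = a^*,\ F(Y\given A=a^*)=P_M(Y\given A=a^*)\}$, a subset of the utility maximisers, and conversely every such $F$ is also a score maximiser, so the incentive-compatibility identity holds.

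For observational strict properness, suppose $S'$ is strictly proper in the classical sense. Then among forecasts inducing $a^*$, the second term $\Delta\cdot\ol{S'}(F(Y\given A=a^*), P_M(Y\given A=a^*))$ is \emph{uniquely} maximised by $F(Y\given A=a^*) = P_M(Y\given A=a^*)$. Combined with the previous paragraph's conclusion that every maximiser of $\ol{S}_{\mathrm{pc}}$ induces $a_F = a^*$, this forces $F(Y\given A=a^*) = P_M(Y\given A=a^*)$ at any maximiser. Since $a^*$ is the unique action with $P_M(A=a^*\given\Do(F)) = 1 > 0$, this is precisely the condition for observational correctness (the forecast on unobserved actions $a\ne a^*$ is unconstrained, consistent with the earlier remark that counterfactual strict properness is impossible here).

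The main obstacle I anticipate is the bookkeeping around the additive constant and the ranges: to make the strict inequality "$\ol{S}_{\mathrm{pc}}$ at an $a^*$-inducing correct forecast $>$ $\ol{S}_{\mathrm{pc}}$ at any $a_F\ne a^*$ forecast" genuinely hold, I need $g(a^*) + \Delta\cdot\ol{S'}_{\max} > g(a^*) - \Delta + \Delta\cdot\ol{S'}_{\max}$ for the worst case, i.e. I am comparing $g(a_F)$-values that differ by at least $\Delta$ against $\ol{S'}$-values that differ by at most $1$ but are scaled by $\Delta$ — so the $\Delta\cdot S'$ perturbation has magnitude at most $\Delta$, exactly matching (not exceeding) the utility gap. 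The strict inequality in the gap assumption ("$>\Delta$") is what gives the needed slack: $g(a^*) - g(a_F) > \Delta \ge \Delta\cdot(\ol{S'}_{\max} - \ol{S'}(F(Y\given A=a_F),\cdot))$, so the action-switch can never be compensated by the scoring-rule bonus. I would state this clean inequality carefully and note that $S', U\in[0,1]$ is used precisely to control these magnitudes.
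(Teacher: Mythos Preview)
Your approach is essentially the same as the paper's: decompose the expected score into the expected-utility term $g(a_F)$ plus the $\Delta$-scaled $S'$-term, use the strict utility-gap assumption together with $S'\in[0,1]$ to show that any maximiser of $\ol{S}_{\mathrm{pc}}$ must induce the optimal action $a^*$, and then invoke (strict) properness of $S'$ for properness and observational strict properness. The paper compresses the key step into the single inequality $\ol{S}(F^*, M) - \ol{S}(F, M) > \Delta + \Delta(\ol{S}'(F^*, M) - \ol{S}'(F, M)) \ge 0$, which is exactly what your final bookkeeping paragraph spells out.
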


\section{Eliciting correct forecasts with estimates of the performative divergence}\label{sec:divergence}

Given a scoring rule $S: \Pcal_\Ycal \times \Ycal \to \RR$, forecast $F \in \Pcal_\Ycal$ and distribution $P \in \Pcal_\Ycal$, the generalised entropy is defined as $H(P):= -\ol{S}(P,P)$ and the generalised divergence as $D(F, P) := -H(P) - \ol{S}(F, P) = \int \left(S(P, y) - S(F, y)\right)P(\diff y)$, following \cite{grunwald2004game}. If $S$ is strictly proper, the entropy is the maximal attainable expected score -- see also Example \ref{ex:not_proper}, where the forecaster manipulates towards easier problems, i.e.\ problems with lower entropy.
Recall the examples of classical scoring rules from the beginning of Section \ref{sec:scoring}: the Bregman score induces the Bregman divergence, the log-score induces the Shannon entropy and Kullback-Leibler divergence, and the (negative) expected Brier score equals the mean square error $-\ol{S}(F, P) = E_P[(f-y)^2] = (p-f)^2 + p(1-p)$, which indeed decomposes into the variance $H(P) = p(1-p)$ and the bias term $D(F, P) = (p-f)^2$.
The entropy of the energy score is given by $H(P) = \frac{1}{2}\EE_{Y, Y'\sim P}\|Y-Y'\|$, and the divergence by $D(F, P) = \EE_{Y\sim F,Y'\sim P}\|Y-Y'\| - \frac{1}{2}\EE_{Y,Y'\sim P}\|Y-Y'\| - \frac{1}{2}\EE_{Y,Y'\sim F}\|Y-Y'\|$.
Definitions of generalised entropy and generalised divergence for non-performative conditional forecasts are given in Appendix \ref{app:conditional_forecasts_proper}.

In the classical setting, maximizing a proper score is equivalent to minimising the divergence: the correct forecast attains zero divergence \citep{grunwald2004game,gneiting2007strictly}. Inspired by this, we aim at proper scoring of forecasts with the performative divergence. To this end, we show that the performative divergence is positive and optimised at observationally correct forecasts (Theorem \ref{thm:divergence_proper}). We subsequently suggest how the principal can calculate the divergence (Corollary \ref{thm:unbiased_proper} and Lemma \ref{thm:unbiased_estimators}), which is a non-trivial problem since the causal mechanism $M$ is unknown to her.

Indeed, for marginal performative forecasts, we see that the deviation of the forecast $F$ from the induced distribution $P_M(Y\given \Do(F))$, given by the \emph{performative divergence} $D_{\mathrm{p}}(F, M) := D(F, P_M(Y\given \Do(F)))$ is also positive. We have $D_{\mathrm{p}}(F, M)=0$ if $F$ is correct for $M$, and this holds with equivalence if the divergence is induced by a \emph{strictly} proper scoring rule. We suitably extend this to conditional performative forecasts as follows:
\begin{definition}\label{def:performative_divergence}
	Given a scoring rule $S$, conditional forecast $F(Y\given A) \in \Pcal_{\Acal \to \Ycal}$ and causal model $M\in \Mcal_{\mathrm{pc}}$, the \emph{performative divergence} is defined as
	\begin{equation*}
		D_{\mathrm{pc}}(F, M) := \int \left(S(P_M(Y\given A=a, \Do(F)), y) - S(F(Y\given A=a), y)\right)P_M(\diff a, \diff y\given \Do(F)).
	\end{equation*}
	With \emph{performative entropy} $H_{\mathrm{pc}}(M\given \Do(F)) := \int H(P_M(Y\given A=a, \Do(F)))P(\diff a\given \Do(F))$, the performative divergence can be written as $D_{\mathrm{pc}}(F, M) = -H_{\mathrm{pc}}(M\given \Do(F)) - \ol{S}_{\mathrm{pc}}(F(Y\given A), M)$.
\end{definition}

Although the divergence cannot be written as the expected value of a scoring rule, we use the same nomenclature of Definition \ref{def:performative_proper} and refer to the divergence as (observationally/counterfactually) (strictly) proper, keeping in mind that the divergence is \emph{minimised} at correct forecasts, instead of \emph{maximised}.
\begin{restatable}[]{theorem}{divergenceproper}\label{thm:divergence_proper}
	Let $\Mcal'\subseteq \Mcal_{\mathrm{pc}}$ be such that for every $M\in\Mcal_{\mathrm{pc}}$ there exists an observationally correct forecast for $Y|A$. If $S$ is proper in the classical sense, then the divergence $D_{\mathrm{pc}}$ is positive and proper. If $S$ is strictly proper in the classical sense, then $D_{\mathrm{pc}}$ is observationally strictly proper.
\end{restatable}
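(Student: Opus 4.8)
The plan is to disintegrate the performative divergence along $A$ and recognise it as a $P_M(A\given\Do(F))$-average of ordinary (classical) generalised divergences between the forecast slices and the induced conditional laws. Concretely, factor $P_M(\diff a,\diff y\given\Do(F)) = P_M(\diff y\given A=a,\Do(F))\,P_M(\diff a\given\Do(F))$ using the version $P_M(Y\given A,\Do(F)) = P_M(Y\given\Do(A,F))$ fixed in the text. For each fixed $a$, integrating $S(P_M(Y\given A=a,\Do(F)),y) - S(F(Y\given A=a),y)$ against $P_M(\diff y\given A=a,\Do(F))$ yields $\ol{S}(Q,Q) - \ol{S}(F(Y\given A=a),Q)$ with $Q := P_M(Y\given A=a,\Do(F))$, which by the definitions of $H$ and $D$ is exactly the classical divergence $D(F(Y\given A=a),\,P_M(Y\given A=a,\Do(F)))$. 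Hence
\begin{equation*}
	D_{\mathrm{pc}}(F,M) = \int D\big(F(Y\given A=a),\, P_M(Y\given A=a,\Do(F))\big)\, P_M(\diff a\given\Do(F)).
\end{equation*}
Since $\Acal$ is discrete this is a (possibly countably infinite) sum over the support of $P_M(A\given\Do(F))$, so only the observational part of $F$ enters $D_{\mathrm{pc}}$.

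Positivity and non-strict properness are then immediate. Classical properness of $S$ is equivalent to $D(G,P)\geq 0$ for all $G,P\in\Pcal_\Ycal$ (because $D(G,P) = \ol{S}(P,P) - \ol{S}(G,P)$), so every summand in the display above is nonnegative and $D_{\mathrm{pc}}(F,M)\geq 0$ for all $F\in\Pcal_{\Acal\to\Ycal}$ and all $M\in\Mcal_{\mathrm{pc}}$. For properness, fix $M\in\Mcal'$ and take an observationally correct forecast $F^{\circ}$, which exists by hypothesis: for every $a$ with $P_M(A=a\given\Do(F^{\circ}))>0$ we have $F^{\circ}(Y\given A=a) = P_M(Y\given A=a,\Do(F^{\circ}))$, so the corresponding summand vanishes, while the remaining actions carry zero mass; therefore $D_{\mathrm{pc}}(F^{\circ},M)=0$, which by the previous sentence is the global minimum. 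A fortiori any fully correct forecast — being in particular observationally correct — also attains the value $0$ and hence minimises $D_{\mathrm{pc}}$, so $D_{\mathrm{pc}}$ is proper relative to $\Mcal'$; under forecast-invariance this minimiser is the unique correct forecast $P_M(Y\given A)$.

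For observational strict properness, suppose $S$ is strictly proper in the classical sense, so $D(G,P)>0$ whenever $G\neq P$. If $F$ minimises $D_{\mathrm{pc}}(\cdot,M)$ then $D_{\mathrm{pc}}(F,M)=0$ by the preceding paragraph, and since the summands in the display are nonnegative they must all vanish over the support of $P_M(A\given\Do(F))$; strictness then forces $F(Y\given A=a) = P_M(Y\given A=a,\Do(F))$ for every $a$ with $P_M(A=a\given\Do(F))>0$, i.e.\ $F$ is observationally correct. Hence every minimiser is observationally correct, which is precisely observational strict properness. One cannot upgrade this to full strict properness, since $D_{\mathrm{pc}}$ is insensitive to the counterfactual slices of $F$.

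The one genuinely delicate step is the disintegration in the first paragraph: one must work with the specific version $P_M(Y\given A,\Do(F)) = P_M(Y\given\Do(A,F))$ so that $\int(\cdots)\,P_M(\diff a,\diff y\given\Do(F))$ really collapses to $\int D(\cdots)\,P_M(\diff a\given\Do(F))$ and the null-probability actions drop out cleanly; everything afterwards is bookkeeping with the elementary inequality $D\geq 0$ (resp.\ $D>0$ off the diagonal). It is also worth flagging in the write-up that the non-strict ``proper'' conclusion presupposes that a \emph{correct} (not merely observationally correct) forecast exists — which it does under the graph / forecast-invariance hypotheses via Theorem \ref{thm:forecastable_iff_separating} — since the stated hypothesis only guarantees an observationally correct one.
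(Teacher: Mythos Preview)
Your proof is correct and follows essentially the same route as the paper's: disintegrate $D_{\mathrm{pc}}$ over $A$ to obtain a $P_M(A\given\Do(F))$-weighted average of classical divergences $D(F(Y\given A=a),P_M(Y\given A=a,\Do(F)))$, invoke $D\geq 0$ for positivity, show the (observationally) correct forecast attains zero, and use strictness of $D$ off the diagonal for observational strict properness. The paper writes the integrand as $H(\cdot)-\ol S(\cdot,\cdot)$ rather than $D(\cdot,\cdot)$, but this is the same computation. Your closing remark that the non-strict ``proper'' conclusion implicitly requires a \emph{fully} correct forecast (not just an observationally correct one) is a fair observation about the theorem statement; the paper's own proof simply assumes such an $F^*$ exists without comment.
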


Notably, Theorem \ref{thm:divergence_proper} does not require the target to be forecast-invariant. Also note that full support of $P_M(A\given \Do(F))$ implies strict properness of the divergence.

When scoring the forecast directly with a conditional scoring rule, this is straightforward for the principal to carry out: upon receiving a conditional forecast $F(Y\given A)$ and observing $a$ and $y$, the resulting score is $S(F, a, y)$. The forecaster picks the forecast which optimises the expected value $\ol{S}_{\mathrm{pc}}(F, M)$. When scoring with the performative divergence however, upon receiving the forecast $F(Y\given A)$ and observing $a$ and $y$, she should score with $S(P_M(Y\given A=a, \Do(F)), y) - S(F(Y\given A=a), y)$. This either requires access to the distribution $P_M(Y\given A=a, \Do(F))$ (in which case forecasting is redundant), or more practically, it requires the principal to estimate the entropy, i.e.\ the expectation of the first term. More generally, the principal can estimate the divergence. If the employed estimator $\hat{D}_{\mathrm{pc}}$ is unbiased, the forecaster optimizes $\EE_M[\hat{D}_{\mathrm{pc}} \given \Do(F)] = D_{\mathrm{pc}}(F, M)$, which renders the scoring method observationally strictly proper by Theorem \ref{thm:divergence_proper}.
\begin{restatable}[]{corollary}{unbiasedproper}\label{thm:unbiased_proper}
	Let $\Mcal'\subseteq \Mcal_{\mathrm{pc}}$ be such that for every $M\in\Mcal^{\mathrm{pc}}$ there exists an observationally correct forecast for $Y|A$, let $S$ be proper in the classical sense, and let $(A_1, Y_1), ..., (A_n, Y_n) \sim P_M(A, Y\given \Do(F))$. Any unbiased estimator $\hat{D}_{\mathrm{pc}}(A_1, ..., Y_n)$ of the performative divergence $D_{\mathrm{pc}}(F, M)$ is proper, for any sample size $n\in\NN$ such that $\hat{D}_{\mathrm{pc}}$ is well-defined. If $S$ is strictly proper in the classical sense, then $\hat{D}_{\mathrm{pc}}$ is observationally strictly proper.
\end{restatable}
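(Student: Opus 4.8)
The plan is to reduce the statement to Theorem \ref{thm:divergence_proper} by a change of objective that is licensed by unbiasedness. Fix $M \in \Mcal'$. The key observation is that, for any conditional forecast $F \in \Pcal_{\Acal \to \Ycal}$ for which $\hat{D}_{\mathrm{pc}}$ is well-defined at sample size $n$, the definition of unbiasedness says precisely that
\[
\EE_M\bigl[\hat{D}_{\mathrm{pc}}(A_1,\dots,Y_n)\given \Do(F)\bigr] = D_{\mathrm{pc}}(F, M),
\]
where the expectation is over the i.i.d.\ sample $(A_1, Y_1),\dots,(A_n, Y_n) \sim P_M(A, Y\given \Do(F))$. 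Crucially, the sampling distribution of $\hat{D}_{\mathrm{pc}}$ itself depends on $F$, but this is exactly the dependence that the notion of unbiasedness already accounts for, and the equality is required to hold for \emph{every} such $F$ (not merely at the correct forecast). Since a risk-neutral forecaster who knows $M$ chooses $F$ to optimise this expectation — consistent with the objective $\ol{S}_{\mathrm{pc}}(F, M)$ used throughout — the scoring method $\hat{D}_{\mathrm{pc}}$ and the performative divergence $D_{\mathrm{pc}}$ induce the same objective over the forecaster's feasible set of forecasts, hence the same set of optimisers.

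Next I would invoke Theorem \ref{thm:divergence_proper}. Under the stated hypotheses — $S$ proper in the classical sense and $\Mcal'$ such that for every $M$ an observationally correct forecast of $Y|A$ exists — the divergence $D_{\mathrm{pc}}$ is positive and proper, so it attains its minimum value $0$ at every correct forecast. Transporting this through the pointwise identity above, $\EE_M[\hat{D}_{\mathrm{pc}}\given \Do(F)]$ is minimised at correct forecasts, which is exactly the assertion that $\hat{D}_{\mathrm{pc}}$ is proper (recalling the convention introduced around Definition \ref{def:performative_divergence} that a divergence-based scoring method is ``proper'' when it is \emph{minimised}, rather than maximised, at correct forecasts). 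For the second claim, if $S$ is strictly proper in the classical sense then Theorem \ref{thm:divergence_proper} gives that $D_{\mathrm{pc}}$ is observationally strictly proper, i.e.\ every minimiser is observationally correct; the equality of objectives then shows every minimiser of $\EE_M[\hat{D}_{\mathrm{pc}}\given \Do(F)]$ is observationally correct, so $\hat{D}_{\mathrm{pc}}$ is observationally strictly proper.

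I do not anticipate a serious obstacle: the corollary is essentially Theorem \ref{thm:divergence_proper} re-expressed in terms of an unbiased plug-in. The only points needing care are (i) interpreting the caveat ``for any sample size $n$ such that $\hat{D}_{\mathrm{pc}}$ is well-defined'' by restricting the optimisation to forecasts on which the estimator is defined — which does not trivialise the claim since a correct forecast exists in $\Mcal'$ and $\hat{D}_{\mathrm{pc}}$ is assumed defined there; and (ii) stressing that the argument uses unbiasedness \emph{uniformly in $F$}, since it is the pointwise identity $\EE_M[\hat{D}_{\mathrm{pc}}\given \Do(F)] = D_{\mathrm{pc}}(F,M)$ that allows the $\argmin$-sets of the two objectives to be identified. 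Everything else is a direct appeal to the already-established Theorem \ref{thm:divergence_proper}.
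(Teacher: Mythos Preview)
Your proposal is correct and follows essentially the same approach as the paper: invoke unbiasedness to obtain the pointwise identity $\EE_M[\hat{D}_{\mathrm{pc}}\given \Do(F)] = D_{\mathrm{pc}}(F, M)$, then appeal directly to Theorem \ref{thm:divergence_proper}. The paper's own proof is a one-liner to exactly this effect; your additional remarks on the caveats are reasonable but not required.
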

Notably, the variance of the estimator and the sample size are irrelevant for the properness of the method. To demonstrate the applicability of this result, we show that for finite $\Acal$ and binary or continuous $Y$, there are strictly proper classical scoring rules with a corresponding unbiased estimator for the performative divergence.
\begin{restatable}[]{lemma}{unbiasedestimators}\label{thm:unbiased_estimators}
	Consider the setting of Corollary \ref{thm:unbiased_proper}. For binary $Y$ and $S$ the Brier score, the estimator
	\begin{equation*}
		\hat{D}_{\mathrm{pc}}^{Brier} := \frac{1}{n}\sum_{i=1}^n(\hat{p}_{a_i} - f_{a_i})^2 - \frac{\hat{p}_{a_i} (1-\hat{p}_{a_i})}{n_{a_i}-1}
	\end{equation*}
	is unbiased, where we write $n_a := \sum_{i=1}^n \I\{a_i = a\}$, $\hat{p}_{a} := \frac{1}{n_a}\sum_{i=1}^n y_i\I\{a_i=a\}$ and $f_{a} := F(Y=1\given A=a)$.
	For $Y \in \RR^n$ with integrable norm and $S$ the energy score, the estimator
	\begin{equation*}
		\hat{D}_{\mathrm{pc}}^{energy} := \frac{1}{n}\sum_{i=1}^n \EE_{Y\sim F_{a_i}}\left[\|Y-y_i\|\right] - \frac{1}{2}\frac{\sum_{\substack{j \neq i \\ a_j = a_i}}\|y_i - y_j\|}{n_{a_i}-1} - \frac{1}{2}\EE_{Y, Y'\sim F_{a_i}}\left[\left\|Y - Y'\right\|\right]
	\end{equation*}
	is unbiased.
\end{restatable}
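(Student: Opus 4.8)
The plan is to condition on the realised actions and verify unbiasedness of each estimator term by term, then average over the actions. Write $\mathbf A := (A_1,\dots,A_n)$, $F_a := F(Y\given A=a)$ and $P_a := P_M(Y\given A=a,\Do(F))$. Assuming the sample is drawn i.i.d.\ from $P_M(A,Y\given\Do(F))$, conditionally on $\mathbf A$ the outcomes $Y_1,\dots,Y_n$ are independent with $Y_i \sim P_{a_i}$, and the counts satisfy $\EE[n_a/n] = P_M(A=a\given\Do(F))$; I also assume each observed value of $A$ occurs at least twice, which is exactly the well-definedness requirement in Corollary~\ref{thm:unbiased_proper} (one may take this as a property of the collection design, or condition on the corresponding event). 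Integrating out $y$ in Definition~\ref{def:performative_divergence} gives $D_{\mathrm{pc}}(F,M) = \sum_a P_M(A=a\given\Do(F))\,D(F_a,P_a)$, with $D$ the classical divergence of the scoring rule at hand, so by the tower property it suffices to show $\EE[\hat D_{\mathrm{pc}} \given \mathbf A] = \frac1n\sum_{i=1}^n D(F_{a_i},P_{a_i}) = \sum_a (n_a/n)\,D(F_a,P_a)$; averaging over $\mathbf A$ then closes both cases.

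For the Brier score, recall the decomposition $-\ol{S}(F,P) = (p-f)^2 + p(1-p)$, so $D(F_a,P_a) = (p_a-f_a)^2$ with $p_a := P_a(Y=1)$. Conditionally on $\mathbf A$, $\hat p_a$ is the mean of $n_a$ i.i.d.\ $\mathrm{Bernoulli}(p_a)$ variables, hence $\EE[\hat p_a\given\mathbf A] = p_a$ and $\mathrm{Var}(\hat p_a\given\mathbf A) = p_a(1-p_a)/n_a$, giving $\EE[(\hat p_a-f_a)^2\given\mathbf A] = (p_a-f_a)^2 + p_a(1-p_a)/n_a$. The one nontrivial computation is that $\hat p_a(1-\hat p_a)/(n_a-1)$ is conditionally unbiased for $p_a(1-p_a)/n_a$: from $\EE[\hat p_a^2\given\mathbf A] = p_a^2 + p_a(1-p_a)/n_a$ one obtains $\EE[\hat p_a(1-\hat p_a)\given\mathbf A] = p_a(1-p_a)(n_a-1)/n_a$. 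Subtracting the correction cancels the variance-inflation term, so each summand of $\hat D_{\mathrm{pc}}^{Brier}$ has conditional expectation $(p_{a_i}-f_{a_i})^2$, and summing and averaging over $\mathbf A$ yields $D_{\mathrm{pc}}(F,M)$.

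For the energy score, use $D(F_a,P_a) = \EE_{Y\sim F_a,Y'\sim P_a}\|Y-Y'\| - \frac{1}{2}\EE_{Y,Y'\sim P_a}\|Y-Y'\| - \frac{1}{2}\EE_{Y,Y'\sim F_a}\|Y-Y'\|$ (all finite since $\|Y\|$ is integrable), and handle the three terms of the $i$-th summand separately, conditionally on $\mathbf A$. For the first term, $\EE_{Y\sim F_{a_i}}\|Y-y_i\|$ is a deterministic function of the data point $y_i$ (the forecast $F$ is not data-dependent) and $Y_i\sim P_{a_i}$, so its conditional expectation is $\EE_{Y\sim F_{a_i},Y'\sim P_{a_i}}\|Y-Y'\|$. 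For the middle term, each of the $n_{a_i}-1$ pairs $(Y_i,Y_j)$ with $j\neq i$ and $a_j=a_i$ is a pair of independent $P_{a_i}$-draws — which is precisely why the estimator must exclude $j=i$ — so the leave-one-out average has conditional expectation $\EE_{Y,Y'\sim P_{a_i}}\|Y-Y'\|$. The last term is deterministic. Adding the three contributions recovers $D(F_{a_i},P_{a_i})$, and averaging over $i$ and then over $\mathbf A$ gives $D_{\mathrm{pc}}(F,M)$.

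The main obstacle is not any single step but the bookkeeping around the conditioning: applying the tower property with the inner energy-score expectations treated as fixed functions of the data, checking that the leave-one-out indexing genuinely yields conditionally independent pairs, and ensuring the $(n_a-1)$ denominators are well-defined (the caveat already flagged in Corollary~\ref{thm:unbiased_proper}). Everything else reduces to the elementary identity $\EE[n_a/n] = P_M(A=a\given\Do(F))$ and the bias-corrected second-moment computation in the Brier case.
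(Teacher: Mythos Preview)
Your proof is correct and follows essentially the same approach as the paper's: condition on the realised actions, verify each summand is conditionally unbiased for $D(F_{a_i},P_{a_i})$ using the Bernoulli second-moment identity (Brier) and pairwise independence of the $P_a$-draws (energy), then average using $\EE[n_a/n]=P_M(A=a\given\Do(F))$. The only cosmetic differences are that the paper conditions on the counts $n_a$ rather than the full vector $\mathbf A$ and phrases the middle energy-score term as the U-statistic known as Gini's mean difference, whereas you argue the pairwise unbiasedness directly.
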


Note that both estimators require $n_{a} \geq 2$ for all $a\in\Acal$. The Brier score is strictly proper, and the energy score is strictly proper with respect to the class of distributions with finite first moment \citep{gneiting2007strictly}. The estimator for the energy score requires an evaluation of the expectations, either analytically or via an (unbiased) sampling scheme.

Instead of making a scoring rule proper by subtracting the estimated entropy, one can multiply a classical scoring rule $S$ with estimates of the \emph{inverse probability weights} (IPWs) $1/P_M(A=a\given \Do(F))$ and properly score with $S_{\mathrm{IPW}}(F, a, y) := S(F(Y\given A=a), y) / P_M(A=a \given \Do(F))$, as proposed by \cite{chen2011decision}. In Appendix \ref{app:ipw} we prove for completeness that if $P_M(A\given \Do(F))$ has full support and $S$ is a (strictly) proper scoring rule in the classical sense, then IPW score $S_{\mathrm{IPW}}$ is (strictly) proper in the performative sense. 
For properness of the IPW score the principal either requires knowledge of $P_M(A\given \Do(F))$, or must be able to estimate it with an unbiased estimator. It is unclear whether such an unbiased estimators exist.
In Appendix \ref{app:experiments_divergence_ipw} we provide an empirical analysis of unbiased estimates of the divergence on synthetic data, and we compare with biased plugin estimators of the divergence and IPW score.

\section{Parameter estimation based on scoring rules}\label{sec:estimation}
In the previous sections we have taken the viewpoint of the principal by addressing how to elicit correct forecasts via proper scoring rules, assuming that the forecaster picks a forecast which maximises the expected score. In this section we switch to the viewpoint of the forecaster who aims at estimating the parameter of the forecasting model from data. We show that if $Y|A$ is forecast-invariant and the parameter of the conditional forecast is estimated using the divergence related to a strictly proper scoring rule, then this yields a correct forecast which is immediately performatively stable, performatively optimal, and subsequently maximises the expected score when the forecaster is scored by a principal using any proper scoring method.

Formally, consider parametrised forecasts $\{F_\theta : \theta\in\Theta\}$ for some parameter space $\Theta\subseteq \RR^d$. If $F_\theta$ does \emph{not} affect $Y$ and one has data $(y_1, ..., y_n)$ with empirical distribution $\hat{P}_n$, one can employ scoring rules for parameter estimation by minimising the divergence $\hat{\theta} = \argmin_\theta D(F_\theta, \hat{P})$, which is equivalent to minimising the empirical negative score $\hat{\theta} = \argmin_\theta \sum_{i=1}^n -S(F_\theta, y_i)$ which satisfies the estimating equation $\sum_{i=1}^n \nabla_\theta S(F_\theta, y_i) = 0$, and therefore defines an $M$-estimator \citep{dawid2014theory}, which (under regularity conditions) is a consistent estimator for a \emph{correct parameter} $\theta^*$, i.e.\ a parameter such that the forecast $P_{\theta^*}$ is observationally correct. For the log-score this amounts to maximum likelihood estimation.
In machine learning, a popular scheme is \emph{score matching} with the \emph{Hyvärinen score}, amounting to the minimisation of the related divergence $D(F, M) = \int_\Ycal |\nabla_y p(y) - \nabla_y f_\theta(y)|^2\diff y$ \citep{hyvarinen2005estimation}.
We investigate whether estimation based on scoring rules can also be employed when the forecast is performative, by formulating this question in the framework of \cite{perdomo2020performative}.
\begin{definition}\label{def:performative_risk}
	Given a loss function $\ell(\theta, a, y)$, the corresponding \emph{performative risk} is defined as $R^{\mathrm{p}}(\theta) := \int \ell(\theta, a, y) P_M(\diff a, \diff y\given \Do(F_\theta))$, and a minimiser $\theta_{PO}$ of the performative risk is referred to as \emph{performatively optimal}. The \emph{decoupled performative risk} is defined as $R^{\mathrm{d}}(\theta_{t+1}, \theta_t) := \int \ell(\theta_{t+1}, a, y) P_M(\diff a, \diff y\given \Do(F_{\theta_{t}}))$, and parameter $\theta_{PS}$ is \emph{performatively stable} if $\theta_{PS} := \argmin_{\theta}R^{\mathrm{d}}(\theta, \theta_{PS})$.
\end{definition}

The preceding sections concern the \emph{elicitation} of correct forecasts through scoring rules, calculated as the score of a forecast with respect to the distribution that forecast induces. These metrics (expected negative score $R^{\mathrm{p}}_S(\theta) := -\ol{S}_{\mathrm{pc}}(F_{\theta}, M)$, divergence $R^{\mathrm{p}}_D(\theta) := D_{\mathrm{pc}}(F_{\theta}, M)$, and analogue definitions for utility score and IPW score) can be interpreted as a generalised type of performative risk, and if these methods are observationally strictly proper, then observational correctness of the forecast is equivalent to performative optimality of the parameter.

For parameter \emph{estimation} the decoupled performative risk is used, which does not correspond to the `performative' metrics as just mentioned. Instead, they correspond to more classical metrics, with an additional dependency on the previous parameter:
\begin{align*}
	R^{\mathrm{d}}_S(\theta_{t+1}, \theta_t) & = -\int \ol{S}(F_{\theta_{t+1}}(Y\given A=a), P_M(Y\given A=a, \Do(F_{\theta_t})))P_M(\diff a\given \Do(F_{\theta_t})) \\
	R^{\mathrm{d}}_D(\theta_{t+1}, \theta_t) & = \int D(F_{\theta_{t+1}}(Y\given A=a), P_M(Y\given A=a, \Do(F_{\theta_t})))P_M(\diff a\given \Do(F_{\theta_t})),
\end{align*}
for which we indeed have that $R^{\mathrm{d}}_S(\theta, \theta) = R^{\mathrm{p}}_S(\theta)$ and $R^{\mathrm{d}}_D(\theta, \theta) = R^{\mathrm{p}}_D(\theta)$. If $S$ and $D$ are strictly proper in the classical sense then they are observationally strictly proper for conditional forecasts (see Appendix \ref{app:conditional_forecasts_proper}) so upon setting $\theta_{t+1} = \argmin_\theta R^{\mathrm{d}}_S(\theta, \theta_t)$ or equivalently $\theta_{t+1} = \argmin_\theta R^{\mathrm{d}}_D(\theta, \theta_t)$ we obtain the recurrence relation $F_{\theta_{t+1}} = P_M(Y\given A, \Do(F_{\theta_{t}}))$ (which holds $P_M(A\given \Do(F_{\theta_t}))$-almost everywhere), assuming there is no model misspecification. Now, if $\theta_t$ is observationally correct then we obtain $F_{\theta_{t}} = P_M(Y\given A, \Do(F_{\theta_{t}}))$ and hence $F_{\theta_{t+1}} = F_{\theta_t}$, i.e.\ performative stability. If $Y|A$ is forecast-invariant then we obtain $F_{\theta_{t+1}} = P_M(Y\given A)$ ($P_M(A\given \Do(F_{\theta_t}))$-almost everywhere) in the aforementioned recurrence relation, so if the supports of $P_M(A\given \Do(F_{\theta_t}))$ and $P_M(A\given \Do(F_{\theta_{t+1}}))$ coincide we see that $F_{\theta_{t+1}}$ is observationally correct. This parameter is in turn performatively optimal for any performative risk for which performative optimality corresponds to correctness of the parameter.
We summarise these insights in the following theorem:
\begin{restatable}[]{theorem}{performativestability}\label{thm:performative_stability}
	Suppose that for every model $M$ in the class $\Mcal'\subseteq \Mcal_{\mathrm{pc}}$ and every forecast $F\in\Pcal_{\Acal \to \Ycal}$ the distribution $P_M(A\given \Do(F))$ has full support and that $Y|A$ is forecast-invariant, and let $D$ be the divergence induced by a strictly proper scoring rule. For any parameter $\theta_t$ for conditional forecast $F_{\theta_t}(Y\given A)$, we have that $\theta_{t+1} := \argmin_\theta R^{\mathrm{d}}_D(\theta, \theta_t)$ yields a correct forecast which is performatively stable and performatively optimal with respect to $R_D(\theta)$.
\end{restatable}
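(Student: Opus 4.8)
The plan is to carry out the computation sketched immediately before the statement, making three implicit points precise: (i) the divergence $D$ of a classical strictly proper scoring rule vanishes exactly on matching arguments, which makes it observationally strictly proper for conditional forecasts (Appendix \ref{app:conditional_forecasts_proper}); (ii) full support of $P_M(A\given\Do(F_{\theta_t}))$ on the discrete space $\Acal$ turns ``$P_M(A\given\Do(F_{\theta_t}))$-almost every $a$'' into ``every $a\in\Acal$''; and (iii) forecast-invariance collapses $P_M(Y\given A,\Do(F))$ into the forecast-independent conditional $P_M(Y\given A)$. I would also surface the standing no-misspecification assumption of this section --- that the conditional $a\mapsto P_M(Y\given A=a)$ lies in $\{F_\theta:\theta\in\Theta\}$ --- since without realizability the $\argmin$ need not reproduce the target.

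First I would expand, using $D(F,P)=\int(S(P,y)-S(F,y))P(\diff y)$,
\[
R^{\mathrm{d}}_D(\theta,\theta_t)=\int D\bigl(F_{\theta}(Y\given A=a),\,P_M(Y\given A=a,\Do(F_{\theta_t}))\bigr)\,P_M(\diff a\given \Do(F_{\theta_t})).
\]
Strict properness of $S$ gives $D\ge 0$ with equality iff its arguments coincide, so this integral is nonnegative and, by full support, vanishes precisely when $F_\theta(Y\given A=a)=P_M(Y\given A=a,\Do(F_{\theta_t}))$ for all $a\in\Acal$. Under no misspecification such a $\theta$ exists, hence every $\theta_{t+1}\in\argmin_\theta R^{\mathrm{d}}_D(\theta,\theta_t)$ satisfies the recurrence $F_{\theta_{t+1}}(Y\given A)=P_M(Y\given A,\Do(F_{\theta_t}))$; forecast-invariance then yields $F_{\theta_{t+1}}(Y\given A=a)=P_M(Y\given A=a)$ for every $a$.

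Correctness, stability and optimality now follow quickly. For correctness, forecast-invariance again gives $P_M(Y\given A=a,\Do(F_{\theta_{t+1}}))=P_M(Y\given A=a)=F_{\theta_{t+1}}(Y\given A=a)$ for all $a\in\Acal$, so $F_{\theta_{t+1}}$ is observationally correct, and counterfactually correct vacuously since full support leaves no unobserved value of $A$. For performative stability, repeat the $\argmin$ computation with $\theta_{t+1}$ in place of $\theta_t$: its minimizers are the $\theta$ with $F_\theta(Y\given A=a)=P_M(Y\given A=a,\Do(F_{\theta_{t+1}}))=P_M(Y\given A=a)=F_{\theta_{t+1}}(Y\given A=a)$, a set containing $\theta_{t+1}$, so $\theta_{t+1}=\argmin_\theta R^{\mathrm{d}}_D(\theta,\theta_{t+1})$. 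For performative optimality with respect to $R_D(\theta)=D_{\mathrm{pc}}(F_\theta,M)$, note $D_{\mathrm{pc}}(F,M)=\int D(F(Y\given A=a),P_M(Y\given A=a,\Do(F)))\,P_M(\diff a\given\Do(F))$; evaluating at $F_{\theta_{t+1}}$ and using correctness makes every integrand zero, so $D_{\mathrm{pc}}(F_{\theta_{t+1}},M)=0$, which is the global minimum since $D_{\mathrm{pc}}\ge 0$ by Theorem \ref{thm:divergence_proper} --- equivalently, Theorem \ref{thm:divergence_proper} together with full support says $D_{\mathrm{pc}}$ is strictly proper, so its minimizers are exactly the correct forecasts.

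The only real difficulty is bookkeeping rather than estimation: one must argue at the level of the $\argmin$ over $\theta$, not merely the minimum value, which is where no-misspecification is essential, and one must keep track that the ``almost every $a$'' appearing in the divergence integral and the ``every $a$'' needed for (counterfactual) correctness coincide --- this is exactly the role of the full-support hypothesis, the same hypothesis that upgrades the observational strict properness of $D_{\mathrm{pc}}$ from Theorem \ref{thm:divergence_proper} to full strict properness and thereby pins down performative optimality. No genuinely hard step is involved.
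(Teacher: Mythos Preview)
Your proposal is correct and follows essentially the same route as the paper: minimise the decoupled risk via strict properness of the classical divergence (the paper packages this as Theorem \ref{thm:conditional_divergence_proper}) together with full support to obtain $F_{\theta_{t+1}}=P_M(Y\given A)$ and hence correctness, iterate once for stability, and invoke Theorem \ref{thm:divergence_proper} to get $R_D(\theta_{t+1})=0$ for optimality. Your version is more explicit about the no-misspecification assumption and the role of full support in making counterfactual correctness vacuous, but the underlying argument is the same.
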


Assume that the forecaster will be scored by the principal using a performatively proper scoring method, e.g.\ with the utility score from Theorem \ref{thm:utility_score_strictly_proper}, or with an unbiased estimate of the performative divergence as in Corollary \ref{thm:unbiased_proper}. If the forecaster has merely estimated her parameter using the procedure in Theorem \ref{thm:performative_stability} and does not have an estimate of $P_M(A\given \Do(F))$, she is unable to evaluate (and hence optimise) her expected score directly. However, since the expected score is optimised by the correct forecast, the procedure from Theorem \ref{thm:performative_stability} optimises her expected score.

\begin{corollary}
	Consider the setting of Theorem \ref{thm:performative_stability}. If the forecaster is scored with a strictly proper scoring rule, then the parameter $\theta_{t+1} := \argmin_\theta R^{\mathrm{d}}_D(\theta, \theta_t)$ is an optimiser of the expected score.
\end{corollary}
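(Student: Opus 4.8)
The plan is to obtain the corollary as a direct composition of Theorem~\ref{thm:performative_stability} with the defining property of the principal's scoring method, so the proof will be short. First I would invoke Theorem~\ref{thm:performative_stability}: under the hypotheses in force (every $M\in\Mcal'$ has $P_M(A\given\Do(F))$ of full support for every $F$, $Y|A$ forecast-invariant, $D$ induced by a strictly proper scoring rule, and—implicitly—no model misspecification), the update $\theta_{t+1}:=\argmin_\theta R^{\mathrm{d}}_D(\theta,\theta_t)$ yields a forecast $F_{\theta_{t+1}}$ that is correct for $M$. Using forecast-invariance, correctness means $F_{\theta_{t+1}}(Y\given A=a)=P_M(Y\given A=a,\Do(F_{\theta_{t+1}}))=P_M(Y\given A=a)$ for every $a\in\Acal$; in particular $F_{\theta_{t+1}}$ agrees with $P_M(Y\given A)$ on the (full) support of $A$.

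Next I would read ``scored with a strictly proper scoring rule'' as: the principal evaluates the forecaster with a scoring method that is observationally strictly proper \emph{in the performative sense} relative to $\Mcal'$ — for instance an unbiased estimate of the performative divergence $D_{\mathrm{pc}}$ as in Corollary~\ref{thm:unbiased_proper} (observationally strictly proper here precisely because $P_M(A\given\Do(F))$ has full support), the IPW score, or the utility score plus a scaled strictly proper add-on as in Theorem~\ref{thm:utility_score_strictly_proper}. By the definition of (observational) properness (Definition~\ref{def:performative_proper}), the associated expected performative metric — the expected score $\ol{S}_{\mathrm{pc}}(F_\theta,M)$, or $-D_{\mathrm{pc}}(F_\theta,M)$, etc. — is maximised at an observationally correct forecast, and under full support of $P_M(A\given\Do(F))$ all observationally correct forecasts coincide $P_M(A\given\Do(F))$-a.e.\ with $P_M(Y\given A)$ and hence attain the same, maximal, value of this metric. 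Therefore the correct forecast $F_{\theta_{t+1}}$ produced in the previous step is a maximiser, i.e.\ $\theta_{t+1}\in\argmax_\theta\ol{S}_{\mathrm{pc}}(F_\theta,M)$ (equivalently, $\theta_{t+1}$ minimises the principal's performative risk). This also just reuses the remark preceding the corollary that, for observationally strictly proper methods, observational correctness of the forecast is equivalent to performative optimality of the parameter.

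The only real content — and the step I would treat most carefully — is the bridge between the two notions of optimality: Theorem~\ref{thm:performative_stability} outputs a \emph{correct forecast}, whereas the corollary asserts a \emph{maximiser of the expected score}, and these agree only because the principal's method is performatively proper, not merely classically proper. Indeed, Example~\ref{ex:not_proper} and Theorem~\ref{thm:nonexistence_observationally_proper} show that applying a classical strictly proper scoring rule directly would generally fail, so the interpretation of the hypothesis genuinely matters; under the assumptions in force (forecast-invariance together with full support of $P_M(A\given\Do(F))$) such performatively strictly proper methods do exist, and the composition above goes through. A secondary, essentially bookkeeping, point is that the existence of a correct parameter in $\Theta$ (no misspecification) is already implicit in the hypotheses of Theorem~\ref{thm:performative_stability}, so nothing additional is needed there.
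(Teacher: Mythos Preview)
Your proposal is correct and matches the paper's approach: the paper gives no separate formal proof of this corollary, but the paragraph immediately preceding it contains exactly your argument --- Theorem~\ref{thm:performative_stability} delivers a correct forecast, and ``since the expected score is optimised by the correct forecast, the procedure from Theorem~\ref{thm:performative_stability} optimises her expected score.'' Your careful reading of ``strictly proper'' as \emph{performatively} proper (rather than classically proper) is exactly what the paper intends, as made explicit in that same paragraph.
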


\section{Discussion}\label{sec:discussion}
We develop a theory of performative forecasting that describes when and how making predictions can influence the very outcomes being predicted, showing that classical scoring rules need not be proper nor compatible with outcome-optimisation under performativity, and we propose new utility-based and divergence‑based scoring methods that incentivise correct forecasting. We further prove that these methods yield stable and optimal parameter estimates under repeated retraining. We hope that our impossibility result and our prominent emphasis on incentive compatibility instill caution when scoring rules are applied in practice, and that the potential solutions inspire further research on reliable and accurate performative predictions.

The provided proper scoring methods depend on a couple of restrictive assumptions which should be scrutinised in practise, and which open up new paths for further research. First, the assumption of forecast-invariance may be hard to motivate in practise. We also assume a discrete sample space $\Acal$; extension to the continuous case would be of interest. We conjecture that our theory for probabilistic forecasts can be extended to point forecasts (or actually, various functionals of $P_M(Y\given A, \Do(F))$) through \emph{consistent scoring functions} \citep{lambert2008eliciting,gneiting2011making,holzmann2014role}.




\begin{ack}
    This work is supported by Booking.com. We thank Jason Hartline for helpful discussions, and Tilmann Gneiting, Sourbh Bhadane and anonymous reviewers for helpful remarks.
\end{ack}

\bibliographystyle{apalike}
\bibliography{refs}



\newpage
\appendix
\section{Introduction to causal models}\label{app:scms}
The causal models that we consider can be interpreted as causal Bayesian networks or structural causal models. For completeness, we briefly present both modelling frameworks.
This material can largely be found in \cite{pearl2009causality,bongers2021foundations,forre2025mathematical,boeken2025are}.

\subsection{Graphs}
A \emph{directed acyclic graph} (DAG) is a tuple $G = (V, E)$ with $V$ a finite set of vertices and $E\subset V\times V$ a set of directed edges such that there are no directed cycles. An \emph{acyclic directed mixed graph} (ADMG) is a tuple $G = (V, E, L)$ with vertices $V$, directed edges $E$, and bidirected edges $L\subset V\times V$ such that there are no directed cycles.

Given DAG $G$ with path $\pi=a\sus...\sus b$, a \emph{collider} is a vertex $v$ with $...\to v \ot...$ in $\pi$. For sets of vertices $A, B, C\subseteq V$ we say that $A$ and $B$ are \emph{$d$-separated} given $C$, written $A\perp^d_G B \given C$, if for every path $\pi = a \sus ... \sus b$ between every $a\in A$ and $b \in B$, there is a collider on $\pi$ that is not an ancestor of $C$, or if there is a non-collider on $\pi$ in $C$. The sets $A$ and $B$ are \emph{$d$-connected} given $C$ if they are not $d$-separated, written $A\nPerp^d_G B \given C$.

Given a DAG $G$ over $V\cup W$, the \emph{latent projection} of $G$ onto $V$ is the ADMG $G_V$ with vertices $V$, directed edges $a\to b$ if there is a path $a \to w_1 \to ... \to w_n \to b$ in $G$ with $w_i\in W$ for all $i=1, ..., n$ (if any), and bi-directed edges $a\tot b$ if there is a bifurcation $a \ot w_1 \ot ... \ot w_k \to ... \to w_n \to b$ in $G$ with $w_i\in W$ for all $i=1, ..., n$ \citep{verma1993graphical}.

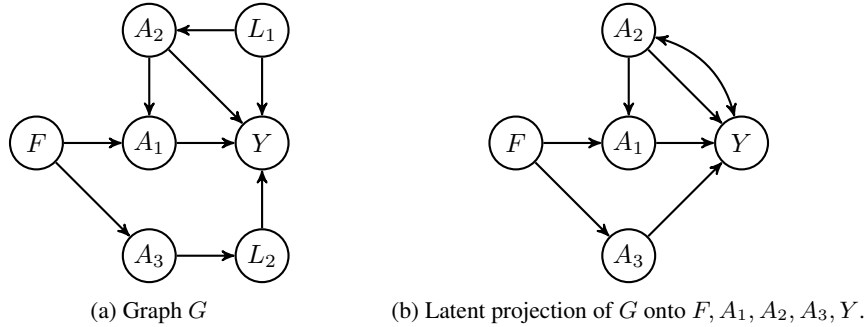
\begin{figure}[!htb]
	\centering
	\begin{subfigure}[b]{0.45\textwidth}
		\centering
		\begin{tikzpicture}
			\node[var] (F) at (1.5, 0) {$F$};
			\node[var] (A1) at (3, 0) {$A_1$};
			\node[var] (L1) at (4.5, 1.5) {$L_1$};
			\node[var] (A2) at (3, 1.5) {$A_2$};
			\node[var] (A3) at (3, -1.5) {$A_3$};
			\node[var] (Y) at (4.5, 0) {$Y$};
			\node[var] (L2) at (4.5, -1.5) {$L_2$};
			\draw[arr] (F) to (A1);
			\draw[arr] (L1) to (A2);
			\draw[arr] (L1) to (Y);
			\draw[arr] (A2) to (A1);
			\draw[arr] (F) to (A3);
			\draw[arr] (A3) to (L2);
			\draw[arr] (L2) to (Y);
			\draw[arr] (A1) to (Y);
			\draw[arr] (A2) to (Y);
		\end{tikzpicture}
		\caption{Graph $G$}
		\label{fig:full_graph}
	\end{subfigure}
	\begin{subfigure}[b]{0.45\textwidth}
		\centering
		\begin{tikzpicture}
			\node[var] (F) at (1.5, 0) {$F$};
			\node[var] (A1) at (3, 0) {$A_1$};
			\node[var] (A2) at (3, 1.5) {$A_2$};
			\node[var] (A3) at (3, -1.5) {$A_3$};
			\node[var] (Y) at (4.5, 0) {$Y$};
			\draw[arr] (F) to (A1);
			\draw[biarr,bend left] (A2) to (Y);
			\draw[arr] (A2) to (A1);
			\draw[arr] (F) to (A3);
			\draw[arr] (A3) to (Y);
			\draw[arr] (A1) to (Y);
			\draw[arr] (A2) to (Y);
		\end{tikzpicture}
		\caption{Latent projection of $G$ onto $F, A_1, A_2, A_3, Y$.}
		\label{fig:latent_projection}
	\end{subfigure}
	\caption{Example of DAG $G$ and a latent projection.}
	\label{fig:covariates}
\end{figure}

For an ADMG $G$ over $V$, a set of variables $A_1, ..., A_n \subseteq V$ can be \emph{merged} into a single variable $A \notin V$ by removing from $G$ all vertices $A_i$, adding a vertex $A$, and for every $v\in V\setminus \{A_1, ..., A_n\}$ add the edge $v \to A$ / $v \tot V$ / $A \to v$ if there is an $i \in \{1, ..., n\}$ such that $v \to A_i$ / $v\tot A_i$ / $A_i \to v$ in $G$.

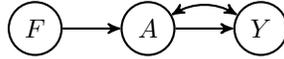
\begin{figure}[!htb]
	\centering
	\begin{tikzpicture}
		\node[var] (F) at (1.5, 0) {$F$};
		\node[var] (A) at (3, 0) {$A$};
		\node[var] (Y) at (4.5, 0) {$Y$};
		\draw[arr] (F) to (A);
		\draw[arr] (A) to (Y);
		\draw[biarr,bend left] (A) to (Y);
	\end{tikzpicture}
	\caption{Merging of variables $A_1, A_2, A_3$ of the graph in Figure \ref{fig:latent_projection} to a single vertex $A$.}
	\label{fig:merge}
\end{figure}

In Figure \ref{fig:full_graph}, we see that $F\Perp^d_G Y\given A_1, A_2, A_3$, so defining $A := (A_1, A_2, A_3)$ we indeed have forecast invariance of $Y|A$ (see Theorem \ref{thm:forecastable_iff_separating}). Note that testing for forecast-invariance in the original graph (where the set of variables $A$ is not merged) is stronger than testing it after the merging of $A$: we have $F\Perp^d Y\given A_1, A_2, A_3$ in Figure \ref{fig:full_graph} and equivalently in Figure \ref{fig:latent_projection}, but we have $F\nPerp^d Y\given A$ in Figure \ref{fig:merge}.

\subsection{Bayesian network}
Given a DAG $G$, writing $\pa(v)$ for the set of parents of $v$ in $G$, a \emph{Bayesian network over $G$} is a tuple of Markov kernels $(P(X_v\given X_{\pa(v)}))_{v\in V}$.
We refer to the joint distribution $P(X_V) = \bigotimes_{v\in V} P(X_v \given X_{\pa(v)})$ as the \emph{observational distribution}.

Given a Bayesian network and a subset of variables $T\subseteq V$ and values $x_T \in \Xcal_T$, the intervened Bayesian network is given by the set of Markov kernels
\begin{equation*}
	\tilde{P}(X_v\given X_{\pa(v)}) = \begin{cases}
		\delta_{x_v}(X_v)       & \text{ if $v\in T$}     \\
		P(X_v\given X_{\pa(v)}) & \text{if $v \notin T$}.
	\end{cases}
\end{equation*}
This gives rise to the interventional distribution $P(X_V \given \Do(X_T = x_t)) := \bigotimes_{v\in V} \tilde{P}(X_v \given X_{\pa(v)})$, and the graph of the intervened Bayesian network is the graph $G$ without any edges into any of the variables $v\in T$.
In the main paper we slightly deviate from this notation by writing e.g.\ $P_M(Y\given \Do(F))$ to mean $P_M(X_Y\given \Do(X_F = x_F))$, so only for $F$ we make the exception to always indicate a value $x_F$ instead of the random variable $X_F$.

\subsection{Structural Causal Models}
Alternatively, the causal models can be interpreted as a \emph{structural causal model} (SCM), which formally consist of a tuple $(V, W, \Xcal, f, P)$ where
\begin{itemize}
	\item $V$ is the index set of endogenous variables
	\item $W$ is the index set of exogenous variables
	\item $\Xcal = \prod_{v\in V} \Xcal_v \times \prod_{w\in W}\Xcal_w$ is a product of Polish spaces (e.g.\ $\{0,1\}, \ZZ, \RR$)
	\item $f_V: \Xcal_V \times \Xcal_W \to \Xcal_V$ is a set of \emph{structural equations}
	\item $P(X_W) = \bigotimes_{w\in W}P(X_w)$ is a product of Borel probability measures.
\end{itemize}

We call $j\in V\cup W$ a \emph{parent} of $i\in V$ if the structural equation $f_i$ truly depends on $x_j$, i.e.\ if there is no measurable map $\tilde{f}_i : \Xcal_{V\setminus \{j\}}\times \Xcal_{W\setminus \{j\}} \to \Xcal_i$ such that for $P(X_W)$-almost all $x_W$ and all $x_V$ we have
\begin{equation*}
	x_i = f_i(x_{V}, x_W) \iff x_i = \tilde{f}_i(x_{V\setminus \{j\}}, x_{W\setminus \{j\}}).
\end{equation*}

The \emph{augmented graph} of an SCM $M$ is a directed graph $G^a(M)$ with vertices $V\cup W$ and for all $j \in V\cup W, i \in V$ an edge $j\to i$ if $j$ is a parent of $i$. The \emph{graph} $G(M)$ of an SCM $M$ is the latent projection of $G^a(M)$ onto $V$. We call an SCM $M$ \emph{acyclic} if $G^a(M)$ (or equivalently, $G(M)$) is acyclic.

Given an SCM $M = (V, W, \Xcal, f, P)$ and a subset of variables $T\subseteq V$ and value $x_T \in \Xcal_T$, the intervened SCM $M_{\Do(X_T=x_T)} = (V, W, \Xcal, \tilde{f}, P)$ has for every $v\in V$ the structural equation
\begin{equation*}
	\tilde{f}_v = \begin{cases}
		x_v           & \text{ if $v\in T$}     \\
		f_v(x_V, x_W) & \text{if $v \notin T$}.
	\end{cases}
\end{equation*}

Given a subset $T\subseteq V$ and writing $O := V\setminus T$, a \emph{solution function with respect to $O$} is a measurable map $g_O:\Xcal_{T} \times \Xcal_W\to \Xcal_O$ such that $P(X_W)$-almost all $x_W \in \Xcal_W$ and for all $x_{T} \in \Xcal_{T}$ we have $g_O(x_{T}, x_W) = f_O(x_{T}, g_O(x_{T}, x_W), x_W)$. Given a random variable $X_W$ with distribution $P(X_W)$, the random variable $X_V := g_V(X_W)$ defines the \emph{observational distribution} $P_M(X_V)$, and the random variable $X_O := g_O(x_T, X_W)$ defines the \emph{interventional distribution} $P_M(X_O\given \Do(X_T=x_T))$. If $M$ is acyclic, $g_O$ can always be obtained by recursive substitution of the structural equations of $M_{\Do(X_T=x_T)}$.

\subsection{Markov property and faithfulness}
For any Bayesian network or acyclic SCM with DAG $G$ and observational distribution $P$ the \emph{global Markov property} holds:
\begin{equation*}
	A\perp^d_G B \given C \implies X_A \Indep_P X_B \given X_C
\end{equation*}
for all $A, B, C\subseteq V$. A Bayesian network is called \emph{faithful} if for all $A, B, C\subseteq V$ we have
\begin{equation*}
	A\nPerp^d_G B \given C \implies X_A \nIndep_P X_B \given X_C,
\end{equation*}
see also \cite{boeken2025are}.

We refer to a distribution $P(X_V)$ as \emph{compatible} with ADMG $G$ with vertices $V$ if the Markov property holds.

\section{Proofs}\label{app:proofs}

\subsection{Section \ref{sec:correct_forecasts}}

\forecastableiffseparating*
\begin{proof}
	iv) $\implies$ iii): If $F\Perp^d_G Y\given A$ then for every $M\in\Mcal'_G$ we have $P_M(Y\given A, \Do(F)) = P_M(Y\given A)$ by the third rule of the do-calculus \citep{pearl2009causality}.\\
	iii) $\implies$ ii): If $P_M(Y\given A, \Do(F)) = P_M(Y\given A)$ for every $M\in\Mcal'_G$, then $P_M(Y\given A)$ is a correct forecast.\\
	ii) $\implies$ i): This follows immediately from Definition \ref{def:correct_conditional_forecast}.\\
	i) $\implies$ iv): We prove the contrapositive. If $F\nPerp^d_G Y\given A$, then there must be a path $F \to V_1 \to ... \to V_n \to Y$ in $G$ with $V_i \notin A$ for all $i=1, ..., n$. Let $P_0, P_1 \in \Pcal_\Ycal$ with $P_0\neq P_1$. Set
	\begin{align*}
		P_M(V_1|\Do(F))             & = \begin{cases}
			                                \delta_0 & \text{if $F(Y\given A=a) = P_1$ for some $a\in \Acal$} \\
			                                \delta_1 & \text{otherwise}
		                                \end{cases} \\
		P_M(V_{i+1}\given \Do(V_i)) & = \begin{cases}
			                                \delta_0 & \text{if $V_i = 0$} \\
			                                \delta_1 & \text{otherwise}
		                                \end{cases} \quad \text{for all $i=1, ..., n-1$}                  \\
		P_M(Y\given V_n)            & =  \begin{cases}
			                                 \delta_0 & \text{if $V_n = 0$} \\
			                                 \delta_1 & \text{otherwise},
		                                 \end{cases}
	\end{align*}
	and pick for every other variable $Z$ (including the variables in $A$) some independent distribution $P_M(Z)$. Then $M$ is Markov with respect to $G$ so $M\in\Mcal'_G$, and $P_M(Y\given A, \Do(F)) = P_1$ iff $P_1 \notin F(Y\given A)$, so there is no observationally correct forecast for $M$.
\end{proof}

\subsection{Section \ref{sec:scoring}}


\nonexistenceobservationallyproper*
\begin{proof}
	Let $P_0, P_1 \in \Pcal_\Ycal$ with $P_0\neq P_1$, and let $M$ be such that $P_M(Y\given \Do(A=1)) = P_1$ and $P_M(Y\given \Do(A\neq 1)) = P_0$. For the case where $P_M(A\given \Do(F))$ is deterministic, let
	\begin{equation*}
		P_{M}(A\given \Do(F)) = \begin{cases}
			\delta_{1} & \text{if $F(Y \given A=1) = \tilde{F}$} \\
			\delta_{0} & \text{otherwise},
		\end{cases}
	\end{equation*}
	then $Y|A$ is forecast-invariant in $M$ with $F^*(Y\given A=1) = P_1$ and $F^*(Y\given A\neq 1) = P_0$ the correct forecast, and the incorrect forecast $\tilde{F}(Y\given A=1) = \tilde{F}$ and $\tilde{F}(Y\given A\neq 1) = P_0$ has
	\begin{align*}
		\ol{S}(F^*, M) = \ol{S}(P_0, P_0) < \ol{S}(\tilde{F}, P_1) = \ol{S}(\tilde{F}, M),
	\end{align*}
	so $S$ is not proper with respect to $M \in \Mcal'$.

	For the case where $P_M(A\given \Do(F))$ has full support, let $Q(A)$ be a probability distribution on $\Acal$ with full support and such that $Q(A=1) < \frac{\ol{S}(\tilde{F}, P_1) - \ol{S}(P_0, P_0)}{\ol{S}(P_1, P_1) - \ol{S}(\tilde{F}, P_1)}$; note that this right-hand side is strictly positive by assumption. Then the mechanism
	\begin{equation*}
		P_{M}(A\given \Do(F)) = \begin{cases}
			\frac{1}{2}\cdot Q +  \frac{1}{2}\cdot\delta_{1} & \text{if $F(Y \given A=1) = \tilde{F}$} \\
			\frac{1}{2}\cdot Q +  \frac{1}{2}\cdot\delta_{0} & \text{otherwise}
		\end{cases}
	\end{equation*}
	has full support for all forecasts $F$ and $Y|A$ is forecast-invariant in $M$, and we have correct forecast $F^*(Y\given A=1) = P_1$ and $F^*(Y\given A\neq 1) = P_0$ and observationally incorrect forecast $\tilde{F}(Y\given A=1) = \tilde{F}$ and $\tilde{F}(Y\given A\neq 1) = P_0$, and we have expected scores
	\begin{align*}
		2\ol{S}(F^*, M) & = (Q(A\neq 1) + 1) \ol{S}(P_0, P_0) + Q(A= 1)\ol{S}(P_1, P_1)                                \\
		                & < Q(A\neq 1) \ol{S}(P_0, P_0) + (Q(A= 1) + 1)\ol{S}(\tilde{F}, P_1) = 2\ol{S}(\tilde{F}, M),
	\end{align*}
	so $S$ is not observationally proper.
\end{proof}

\subsection{Section \ref{sec:bdt}}

\utilityscoreproper*
\begin{proof}
	Since $\ol{S}(F, M) = \int U(a_F, y)P_M(\diff y\given \Do(F)) = \int U(a, y)P_M(\diff a, \diff y\given \Do(F))$ we automatically have
	\begin{equation*}
		\argmax_F \ol{S}(F, M) = \argmax_F \int U(a, y)P_M(\diff a, \diff y\given \Do(F)),
	\end{equation*}
	so $S$ is incentive compatible with $U$.
	If $Y|A$ is forecast-invariant, for every $M \in \Mcal'$ the forecast $F(Y \given A) = P_M(Y\given A)$ is a correct forecast. By definition of the Bayes act $a_F$ we have $a_{P_M(Y|A)} = \argmax_a \int U(a, y)P_M(\diff y\given A=a)$ so for $F\in\Pcal_{\Acal\to\Ycal}$ we have
	\begin{align*}
		\ol{S}(F(Y\given A), M) & = \int U(a_F, y)P_M(\diff y\given A=a_F)                                         \\
		                        & \leq \int U(a_{P_M}, y)P_M(\diff y\given A=a_{P_M}) = \ol{S}(P_M(Y\given A), M).
	\end{align*}
	Since the expected score is maximised at a correct forecast we have that $S$ is proper.\\
	If $Y|A$ is not forecast-invariant in $\Mcal_G'$, then by Theorem \ref{thm:forecastable_iff_separating} there does not exist an observationally correct forecast, so the utility score is not proper. (Note that in the proof of Theorem \ref{thm:forecastable_iff_separating}, the counterexample for which no observationally correct forecast exists can easily be altered to let $A=a_F$ be the Bayes act, so that the theorem holds in the class $\Mcal_G'\subseteq \Mcal_{\mathrm{dt}}$.)
\end{proof}

\utilityscorestrictlyproper*
\begin{proof}
	We have
	\begin{equation*}
		\ol{S}(F, M) = \int U(a, y)P_M(\diff a, \diff y\given \Do(F)) + \Delta \ol{S}'(F, M),
	\end{equation*}
	so if $F^* = \argmax_F \int U(a, y)P_M(\diff a, \diff y\given \Do(F))$, then for any other $F$ we have
	\begin{align*}
		\ol{S}(F^*, M) - \ol{S}(F, M) & = \int U(a, y)P_M(\diff a, \diff y\given \Do(F^*))  - \int U(a, y)P_M(\diff a, \diff y\given \Do(F)) \\
		                              & + \Delta(\ol{S}'(F^*, M) - \ol{S}'(F, M))                                                            \\
		                              & > \Delta + \Delta(\ol{S}'(F^*, M) - \ol{S}'(F, M)) \geq 0
	\end{align*}
	since $S' \in [0,1]$, hence
	\begin{equation*}
		\argmax_F \ol{S}(F, M) = \argmax_F \int U(a, y)P_M(\diff a, \diff y\given \Do(F)),
	\end{equation*}
	so $S$ is incentive compatible with $U$.
	If $Y|A$ is forecast-invariant, then for every $M \in \Mcal'$ the forecast $F^*(Y \given A) = P_M(Y\given A)$ is a correct forecast.
	Note that any optimiser $F$ of $\ol{S}(F, M)$ must induce the action $a_F = a^* := a_{F^*}$. If $F(Y\given A=a^*)\neq F^*(Y\given A=a^*)$, then $\ol{S}'(F, M) < \ol{S}'(F^*, M)$ so also $\ol{S}(F, M) < \ol{S}(F', M)$, hence $S$ is observationally strictly proper.
\end{proof}

\subsection{Section \ref{sec:divergence}}

\divergenceproper*
\begin{proof}
	If $S$ is proper in the classical sense, we have for every $F(Y\given A)\in\Pcal_{\Acal \to \Ycal}$ and every $a\in\Acal$ that
	\begin{align*}
		\ol{S}(F(Y\given A=a), P_M(Y\given A=a, \Do(F))) & \leq \ol{S}(P_M(Y\given A=a, \Do(F)), P_M(Y\given A=a, \Do(F))) \\
		                                                 & =: H(P_M(Y\given A=a, \Do(F)))
	\end{align*}
	so the divergence is positive:
	\begin{align}
		\begin{split}
			\label{eq:divergence}
			& D_{\mathrm{pc}}(F, M) = \int \left(S(P_M(Y\given A=a, \Do(F)), y) - S(F(Y\given A=a), y)\right)P_M(\diff a, \diff y\given \Do(F)) \\
			& = \int \left(H(P_M(Y\given A=a, \Do(F))) - \ol{S}(F(Y\given A=a), P_M(Y\given A=a, \Do(F)))\right)P_M(\diff a\given \Do(F))\\
			&\geq 0.
		\end{split}
	\end{align}
	If $F^*(Y\given A)$ is correct for $M$, then $F^*(Y\given A=a) = P_M(Y\given A=a, \Do(F^*))$ for all $a\in\Acal$ such that $P_M(A=a\given \Do(F^*)) > 0$, so for every such $a$ we have
	\begin{align*}
		\ol{S}(F^*(Y\given A=a), P_M(Y\given A=a, \Do(F^*))) & = \ol{S}(P_M(Y\given A=a, \Do(F^*)), P_M(Y\given A=a, \Do(F^*))) \\
		                                                     & = H(P_M(Y\given A=a, \Do(F)))
	\end{align*}
	and by plugging this into (\ref{eq:divergence}) we get $D_{\mathrm{pc}}(F^*, M) = 0$, so $-D_{\mathrm{pc}}$ is indeed proper.
	If $S$ is strictly proper, then for any $F(Y\given A=a) \neq P_M(Y\given A=a, \Do(F))$ for some $a$ such that $P_M(A=a\given \Do(F)) > 0$ then we have
	\begin{align*}
		H(P_M(Y\given A=a, \Do(F))) - \ol{S}(F(Y\given A=a), P_M(Y\given A=a, \Do(F^*)))  > 0
	\end{align*}
	and hence $D_{\mathrm{pc}}(F(Y\given A), M) > 0$, so $-D$ is indeed strictly observationally proper.
\end{proof}

\unbiasedproper*
\begin{proof}
	By unbiasedness of the estimator we have $\int\hat{D}_{\mathrm{pc}}(A_1, ..., Y_1)\diff P_M^n(A, Y) \given \Do(F) = D_{\mathrm{pc}}(F, M)$, so the result immediately follows from Theorem \ref{thm:divergence_proper}.
\end{proof}

\unbiasedestimators*
\begin{proof}
	Note that we have $D_{\mathrm{pc}}^{Brier}(F, M) = \sum_{a\in\Acal}(p_a - f_a)^2P_M(A=a\given\Do(F))$.
	By taking the expectation over the data, we have
	\begin{equation*}
		\EE\left[\hat{D}_{\mathrm{pc}}^{Brier}\right] = \EE\left[\sum_{a\in\Acal}\frac{n_a}{n}(\hat{p}_{a_i} - F_{a_i})^2 - \frac{\hat{p}_{a_i} (1-\hat{p}_{a_i})}{n_{a_i}-1}\right].
	\end{equation*}
	Writing $p_a := P(Y=1 \given A, \Do(F))$, for a given $a\in\Acal$ in the summation and by conditioning on $n_a$ we have $\EE[(\hat{p}_a - f_a)^2 \given n_a] = (p_a - f_a)^2 + \frac{1}{n_a}p_n(1-p_n)$ and $\EE[\hat{p}_a(1-\hat{p}_a)\given n_a] = \frac{n_a - 1}{n_a}p_a(1-p_a)$, and hence $\EE[\frac{n_a}{n}(\hat{p}_{a_i} - f_{a_i})^2 - \frac{\hat{p}_{a_i} (1-\hat{p}_{a_i})}{n_{a_i}-1} \given n_a] = \frac{n_a}{n}(p_a - f_a)^2$. Integrating out the $n_a$ we have $\EE[\frac{n_a}{n}] = P_M(A\given \Do(F))$ and hence $\EE[\hat{D}_{\mathrm{pc}}^{Brier}] = \sum_{a\in\Acal} (p_a - f_a)^2 P(A\given \Do(F)) = D_{\mathrm{pc}}^{Brier}(F, M)$, which is the desired result.

	For the Energy score, the divergence $D_{\mathrm{pc}}^{Energy}$ is equal to
	\begin{equation*}
		\sum_{a\in\Acal}\left(
		\EE_{\substack{Y\sim F_a\\ Y'\sim P_a}} [\|Y-Y'\|]
		- \frac{1}{2}\EE_{Y,Y'\sim P_a} [\|Y-Y'\|]
		- \frac{1}{2}\EE_{Y, Y'\sim F_a} [\|Y-Y'\|]
		\right)P_M(A=a\given \Do(F)).
	\end{equation*}
	Taking the expectation of the first term in the estimator we obtain $\EE\left[\frac{1}{n}\sum_{i=1}^n \EE_{Y\sim F_{a_i}}\left[\|Y-y_i\|\right]\right] = \sum_{a\in\Acal}\EE_{Y\sim F_{a}, Y'\sim P_a}\left[\|Y-Y'\|\right]P_M(A=a\given \Do(F))$.
	Denoting $I_a := \{i : A_i = a\}$, we see for the second term that
	\begin{equation*}
		\frac{1}{n}\sum_{i=1}^n\frac{1}{2}\frac{\sum_{\substack{j \neq i \\ a_j = a_i}}\|y_i - y_j\|}{n_{a_i}-1} = \sum_{a\in\Acal}\frac{1}{2}\left(\frac{1}{n_{a}(n_{a}-1)}\sum_{\substack{i\neq j\\i,j\in I_a}}\|Y_i - Y_j\|\right)\frac{n_a}{n},
	\end{equation*}
	where the part between brackets is a standard U-statistic for the norm (also known as \emph{Gini's mean difference}) of independent random variables $Y, Y'\sim P_M(Y\given A=a, \Do(F))$, so the expectation of this term equals $\sum_{a\in\Acal}\frac{1}{2}\EE_{Y,Y'\sim P_a} [\|Y-Y'\|]P_M(A=a\given \Do(F))$ \citep{szekely2013energy}.
	The expected value of the last term clearly equals $\sum_{a\in \Acal}\frac{1}{2}\EE_{Y, Y'\sim F_a} [\|Y-Y'\|]P_M(A=a\given \Do(F))$, so upon combining these results we see that $\EE[\hat{D}_{\mathrm{pc}}^{Energy}] = D_{\mathrm{pc}}^{Energy}$.
\end{proof}

\subsection{Section \ref{sec:estimation}}
\performativestability*
\begin{proof}
	Since $P_M(A\given \Do(F_{\theta_t}))$ has full support, we have by Theorem \ref{thm:conditional_divergence_proper} that the decoupled performative risk $R^{\mathrm{d}}_D$ is strictly proper, hence we have that $F_{\theta_{t+1}}$ is correct. Similarly we have that $\theta_{t+2} := \argmin_\theta R^{\mathrm{d}}_D(\theta, \theta_{t+1})$ is correct as well, hence $\theta_{t+1} = \theta_{t+2}$, so $\theta_{t+1}$ is performatively stable. By Theorem \ref{thm:divergence_proper} we have that the performative risk $R_D$ is strictly proper as well, so $R_D(\theta_{t+1})=0$, hence $\theta_{t+1}$ is performatively optimal.
\end{proof}

\section{Proper scoring rules for non-performative conditional forecasts}\label{app:conditional_forecasts_proper}
As a special case of Definition \ref{def:correct_conditional_forecast}, given a Markov kernel $P(Y\given A)\in \Pcal_{\Acal\to \Ycal}$ and distribution $P(A) \in \Pcal_{\Acal}$, the conditional forecast $F(Y\given A)\in\Pcal_{\Acal\to\Ycal}$ is:
\begin{itemize}
	\setlength{\itemsep}{-1pt}
	\item \emph{observationally correct} if $F(Y\given A=a) = P(Y\given A=a)$ for all $a\in\Acal$ such that $P(A=a) > 0$;
	\item \emph{counterfactually correct} if $F(Y\given A=a) = P(Y\given A=a)$ for all $a\in\Acal$ such that $P(A=a) = 0$;
	\item \emph{correct} if it is observationally and counterfactually correct.
\end{itemize}

Given a conditional scoring rule $S:\Pcal_{\Acal \to \Ycal} \times \Acal \to \Ycal \to \RR$, conditional forecast $F(Y\given A) \in \Pcal_{\Acal \to \Ycal}$ and model $(P(Y\given A), P(A)) \in \Pcal_{\Acal \to \Ycal}\times \Pcal_\Acal$, the expected score is
\begin{equation*}
	\ol{S}_{\mathrm{c}}(F(Y\given A), P(A, Y)) := \int S(F(Y\given A), a, y)P(\diff a, \diff y),
\end{equation*}
where $P(A, Y) := P(Y\given A)\otimes P(A)$.

\begin{definition}\label{def:proper_conditional}
	We call a conditional scoring rule
	\begin{itemize}
		\setlength{\itemsep}{-4pt}
		\item \emph{observationally (strictly) proper} relative to $\Pcal_{\Acal \to \Ycal} \times \Pcal_\Acal$ if $\ol{S}_{\mathrm{c}}$ is (only) maximised at observationally correct forecasts for all $M\in \Mcal'$;
		\item \emph{counterfactually (strictly) proper} relative to $\Pcal_{\Acal \to \Ycal}\times \Pcal_\Acal$ if $\ol{S}_{\mathrm{c}}$ is (only) maximised at counterfactually correct forecasts for all $M\in \Mcal'$;
		\item \emph{(strictly) proper} relative to $\Pcal_{\Acal \to \Ycal}\times \Pcal_\Acal$ if it is observationally and counterfactually (strictly) proper, that is, if $\ol{S}_{\mathrm{c}}$ is (only) maximised at correct forecasts for all $(P(Y\given A), P(A)) \in \Pcal_{\Acal \to \Ycal}\times \Pcal_\Acal$.
	\end{itemize}
\end{definition}

Recall that any scoring rule $S:\Pcal_\Ycal \times \Ycal\to \RR$ induces a conditional scoring rule $S'$ through $S'(F, a, y) := S(F(Y\given A=a), y)$.
\begin{theorem}\label{thm:marginal_conditional_proper}
	If $S$ is a proper scoring rule for marginal forecasts with respect to $\Pcal_\Ycal$, then the induced conditional scoring rule is proper with respect to $\Pcal_{\Acal \to \Ycal}\times \Pcal_\Acal$. If $S$ is strictly proper, then the induced conditional scoring rule is observationally strictly proper.
\end{theorem}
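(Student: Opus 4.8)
The plan is to reduce properness of the induced conditional scoring rule to properness of the underlying marginal scoring rule, by decomposing the expected conditional score into a sum (integral) over values of $A$. First I would write out the expected conditional score for an arbitrary conditional forecast $F(Y\given A)$ against the model $(P(Y\given A), P(A))$:
\begin{equation*}
	\ol{S}_{\mathrm{c}}(F(Y\given A), P(A,Y)) = \int S(F(Y\given A=a), y)\, P(\diff a, \diff y) = \int \ol{S}(F(Y\given A=a), P(Y\given A=a))\, P(\diff a),
\end{equation*}
using that $P(A,Y) = P(Y\given A)\otimes P(A)$ and the definition of the induced rule $S'(F,a,y) = S(F(Y\given A=a), y)$. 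So the expected conditional score is an average over $a$ of the marginal expected scores $\ol{S}(F(Y\given A=a), P(Y\given A=a))$.

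Next I would apply properness of $S$ pointwise: for each fixed $a$, properness of $S$ relative to $\Pcal_\Ycal$ gives $\ol{S}(F(Y\given A=a), P(Y\given A=a)) \leq \ol{S}(P(Y\given A=a), P(Y\given A=a))$, with the right-hand side being the generalised entropy $H(P(Y\given A=a))$. Integrating this inequality against $P(\diff a)$ shows $\ol{S}_{\mathrm{c}}(F(Y\given A), P(A,Y)) \leq \int H(P(Y\given A=a))\, P(\diff a)$, and the upper bound is attained by the conditional forecast $F(Y\given A=a) = P(Y\given A=a)$ — in particular by any correct forecast, and also by any forecast that merely agrees with $P(Y\given A=a)$ on the support of $P(A)$ (since values $a$ with $P(A=a)=0$ contribute nothing to the integral). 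Hence every correct forecast is a maximiser, establishing properness.

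For the strict case, suppose $S$ is strictly proper and $F(Y\given A)$ is \emph{not} observationally correct, so there is some $a_0$ with $P(A=a_0) > 0$ and $F(Y\given A=a_0) \neq P(Y\given A=a_0)$. Strict properness of $S$ gives a strict inequality $\ol{S}(F(Y\given A=a_0), P(Y\given A=a_0)) < H(P(Y\given A=a_0))$ at that single value, while the inequality is non-strict elsewhere; since $P(A=a_0)>0$, integrating against $P(\diff a)$ preserves the strictness, so $\ol{S}_{\mathrm{c}}(F(Y\given A), P(A,Y)) < \int H(P(Y\given A=a))\, P(\diff a)$, meaning $F(Y\given A)$ is not a maximiser. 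Thus every maximiser is observationally correct, which is exactly observational strict properness. (Note that the induced rule cannot be counterfactually strictly proper: it ignores $F(Y\given A=a)$ for $a$ off the support of $P(A)$, so maximisers need not be counterfactually correct — consistent with the statement only claiming observational strict properness.)

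\textbf{Main obstacle.} There is no serious technical obstacle here; the only things to be careful about are measure-theoretic: justifying the interchange of integration and the pointwise inequality (Tonelli/Fubini, which is fine since $\Acal$ is discrete so the $a$-integral is a sum), and correctly handling the null set $\{a : P(A=a)=0\}$ so that the ``observational'' qualifier in the strict case is stated for exactly the right set of $a$'s. The argument is essentially a one-line averaging reduction once the decomposition is written down.
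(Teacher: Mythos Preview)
Your proposal is correct and follows essentially the same approach as the paper: decompose $\ol{S}_{\mathrm{c}}$ as an integral over $a$ of the marginal expected scores, apply (strict) properness of $S$ pointwise, integrate the resulting inequality against $P(\diff a)$, and for the strict case exploit a single $a_0$ with $P(A=a_0)>0$ where the inequality is strict. The paper's proof is identical in structure, including the closing remark that counterfactual strict properness fails.
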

\begin{proof}
	Given $(P(Y\given A), P(A)) \in \Pcal_{\Acal \to \Ycal}\times \Pcal_\Acal$ we have for every $a\in \Acal$ with $P(A=a)>0$ that $F(Y\given A=a), P(Y\given A=a) \in \Pcal_\Ycal$, and since $S$ is proper in the classical sense we have
	\begin{equation}\label{eqn:conditional}
		\ol{S}(F(Y\given A=a), P(Y\given A=a)) \leq \ol{S}(P(Y\given A=a), P(Y\given A=a)).
	\end{equation}
	Integrating with respect to $P(A)$ gives
	\begin{align}
		\begin{split}\label{eqn:integrated}
			\ol{S}_c(F(Y\given A), P(A,Y)) & = \int \ol{S}(F(Y\given A=a), P(Y\given A=a))P(\diff a)                                            \\
			& \leq \int \ol{S}(P(Y\given A=a), P(Y\given A=a)) P(\diff a) = \ol{S}_c(P(Y\given A), P),
		\end{split}
	\end{align}
	and since $P(Y\given A)$ is correct, $S$ is proper.
	If $S$ is strictly proper in the classical sense, then for $F(Y\given A) \neq P(Y\given A)$ there is an $a$ with $P(A=a)>0$ such that $F(Y\given A=a) \neq P(Y\given A=a)$, in which case we have a strict inequality in (\ref{eqn:conditional}), giving a strict inequality in (\ref{eqn:integrated}), so $S$ is observationally strictly proper. Note that any other $Q(Y\given A)\in \Pcal_{\Acal \to \Ycal}$ such that $Q(Y\given A=a) = P(Y\given A=a)$ for all $a\in\Acal$ with $P(A=a)>0$ obtains the same expected score but need not be counterfactually correct, so $S$ is not counterfactually strictly proper.
\end{proof}

\subsection{Divergence for non-performative conditional forecasts}
For a conditional forecast $F(Y\given A) \in \Pcal_{\Acal \to \Ycal}$ and model $(P(Y\given A), P(A)) \in \Pcal_{\Acal \to \Ycal}\times \Pcal_\Acal$, we define the generalised conditional entropy and generalised conditional divergence as
\begin{align*}
	H_{\mathrm{c}}(P(A, Y))               & := \int H(P(Y \given A=a))P(\diff a)                                    \\
	D_{\mathrm{c}}(F(Y\given A), P(A, Y)) & := H_{\mathrm{c}}(P) - \ol{S}_{\mathrm{c}}(F(Y\given A), P)             \\
	                                      & = \int S(P(Y \given A=a), y) - S(F(Y\given A=a), y)P(\diff a, \diff y).
\end{align*}
For the log-score, the entropy $H_{\mathrm{c}}$ is also known as the \emph{conditional entropy} \citep{cover2006elements}.

For the divergence, we obtain a theorem similar to Theorem \ref{thm:marginal_conditional_proper}, that the generalised conditional divergence is observationally (strictly) proper if it is induced by a (strictly) proper scoring rule.
\begin{theorem}\label{thm:conditional_divergence_proper}
	If $S$ is a proper scoring rule, then $D_{\mathrm{c}}$ is proper for conditional forecasts. If $S$ is strictly proper, then $D_{\mathrm{c}}$ observationally strictly proper.
\end{theorem}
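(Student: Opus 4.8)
The plan is to mirror the proof of Theorem \ref{thm:marginal_conditional_proper}, but now tracking the divergence rather than the expected score. First I would observe that for any conditional forecast $F(Y\given A) \in \Pcal_{\Acal \to \Ycal}$ and model $(P(Y\given A), P(A))$, we can write the generalised conditional divergence pointwise as
\begin{equation*}
	D_{\mathrm{c}}(F(Y\given A), P(A, Y)) = \int D(F(Y\given A=a), P(Y\given A=a)) P(\diff a),
\end{equation*}
which follows directly from the definition $D_{\mathrm{c}}(F, P) = H_{\mathrm{c}}(P) - \ol{S}_{\mathrm{c}}(F, P)$ together with $H_{\mathrm{c}}(P) = \int H(P(Y\given A=a)) P(\diff a)$ and $\ol{S}_{\mathrm{c}}(F, P) = \int \ol{S}(F(Y\given A=a), P(Y\given A=a)) P(\diff a)$, using $D(F,P) = -H(P) - \ol{S}(F,P)$.

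Next, since $S$ is proper in the classical sense, the generalised divergence $D$ is nonnegative: $D(G, Q) \geq 0$ for all $G, Q \in \Pcal_\Ycal$, and $D(Q, Q) = 0$. Applying this with $G = F(Y\given A=a)$ and $Q = P(Y\given A=a)$ for each $a$, the integrand above is nonnegative, so $D_{\mathrm{c}} \geq 0$; and if $F(Y\given A)$ is correct, i.e. $F(Y\given A=a) = P(Y\given A=a)$ for all $a$ with $P(A=a) > 0$, then the integrand vanishes $P(A)$-almost everywhere, so $D_{\mathrm{c}}(F(Y\given A), P(A,Y)) = 0$. Hence $D_{\mathrm{c}}$ is minimised (at value $0$) at correct forecasts, which is exactly properness in the sense of Definition \ref{def:proper_conditional} (adapted to divergences, which are minimised rather than maximised).

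For the strict case, suppose $S$ is strictly proper, so $D(G, Q) > 0$ whenever $G \neq Q$. If $F(Y\given A)$ is not observationally correct, there is some $a_0$ with $P(A = a_0) > 0$ and $F(Y\given A = a_0) \neq P(Y\given A = a_0)$, whence $D(F(Y\given A=a_0), P(Y\given A=a_0)) > 0$; since the integrand is nonnegative everywhere and strictly positive on the set $\{a_0\}$ of positive $P(A)$-measure, $D_{\mathrm{c}} > 0$. Therefore every minimiser of $D_{\mathrm{c}}$ is observationally correct, i.e. $D_{\mathrm{c}}$ is observationally strictly proper. As in Theorem \ref{thm:marginal_conditional_proper}, it is not counterfactually strictly proper, since modifying $F(Y\given A=a)$ on the $P(A)$-null set of unobserved $a$'s leaves $D_{\mathrm{c}}$ unchanged. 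I do not expect any real obstacle here — the only thing to be careful about is the measure-theoretic handling of the null set of $a$ with $P(A=a)=0$, which is harmless because $\Acal$ is discrete, and the sign convention (divergences are minimised, so ``proper'' means the minimisers are the correct forecasts, matching the convention already adopted in the main text after Definition \ref{def:performative_divergence}).
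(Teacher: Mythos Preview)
Your proposal is correct and follows essentially the same route as the paper's own proof: write $D_{\mathrm{c}}(F,P) = \int D(F(Y\given A=a), P(Y\given A=a))\,P(\diff a)$, use nonnegativity of $D$ from properness of $S$ to get $D_{\mathrm{c}} \geq 0$ with equality at correct forecasts, and use strict positivity of $D$ at incorrect points of positive $P(A)$-mass for the strict case. Your version is in fact slightly more detailed than the paper's (which omits the explicit argument for why the integral is strictly positive when $F$ is observationally incorrect).
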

\begin{proof}
	Given $(P(Y\given A), P(A)) \in \Pcal_{\Acal \to \Ycal}\times \Pcal_\Acal$ and proper scoring rule $S$ we have
	\begin{equation}
		D_{\mathrm{c}}(F(Y\given A), P(A, Y)) = \int D(F(Y\given A=a), P(Y\given A=a))P(\diff a) \geq 0
	\end{equation}
	and $D_{\mathrm{c}}(F(Y\given A), P(A, Y)) = 0$, so $D_{\mathrm{c}}$ is proper. If $S$ is strictly proper then for every $a\in\Acal$ with $P(A=a)>0$ we have $D(F(Y\given A=a), P(Y\given A=a))=0$ if and only if $F(Y\given A=a) = P(Y\given A=a)$, hence $D_{\mathrm{c}}(F, P) = 0$ only if $F$ is observationally correct.
\end{proof}

\section{Proper scoring with inverse probability weights}\label{app:ipw}
By scoring with divergence, the utilised scoring rule is made performatively proper by subtracting the (estimated) entropy. As alternative approach, one can multiply the score by (estimates of) the \emph{inverse probability weights} (IPWs) $1 / P_M(A\given \Do(F))$, and consider the expected \emph{IPW score}:
\begin{align*}
	\ol{S}_{\mathrm{IPW}}(F, M) & := \int \frac{S(F(Y\given A=a), y)}{P_M(A=a\given \Do(F))}P(\diff a, \diff y \given \Do(F)) \\
	                            & = \sum_{a\in\Acal}\int S(F(Y\given A=a), y)P(\diff y \given a).
\end{align*}
\cite{chen2011decision} introduced the IPW score in settings where the principal knows $P_M(A)$ (which does not depend on $F$), who can then after every observation $(a,y)$ properly score with $S_{\mathrm{IPW}}(F, a, y) := S(F(Y\given A=a), y) / P_M(A=a)$.
If the distribution $P_M(A\given \Do(F))$ has full support, then this procedure is strictly proper:
\begin{theorem}\label{thm:ipw_score_proper}
	Suppose that for every model $M$ in the class $\Mcal'\subseteq \Mcal_{\mathrm{pc}}$ and every forecast $F\in\Pcal_{\Acal \to \Ycal}$ the distribution $P_M(A\given \Do(F))$ has full support, and that $Y|A$ is forecast-invariant in $M\in\Mcal'$. If $S$ is (strictly) proper in the classical sense, then $\ol{S}_{\mathrm{IPW}}$ is (strictly) proper in the performative sense.
\end{theorem}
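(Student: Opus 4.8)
The plan is to reduce the performative properness of the IPW score to classical properness applied pointwise in $a$, mirroring the argument behind Theorem~\ref{thm:marginal_conditional_proper}. First I would expand the expected IPW score using the disintegration $P_M(\diff a,\diff y\given\Do(F)) = P_M(\diff y\given A=a,\Do(F))\,P_M(\diff a\given\Do(F))$: since $P_M(A=a\given\Do(F))>0$ for every $a\in\Acal$ by the full-support hypothesis, the inverse probability weight is well-defined and the factor $P_M(A=a\given\Do(F))$ cancels, yielding
\[
	\ol{S}_{\mathrm{IPW}}(F, M) = \sum_{a\in\Acal} \ol{S}\bigl(F(Y\given A=a),\, P_M(Y\given A=a,\Do(F))\bigr),
\]
as already noted in the display preceding the theorem (here discreteness of $\Acal$ is what makes the sum meaningful). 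Next I would invoke forecast-invariance to replace $P_M(Y\given A=a,\Do(F))$ by $P_M(Y\given A=a)=:P(Y\given A=a)$, which crucially no longer depends on $F$, so that $\ol{S}_{\mathrm{IPW}}(F,M) = \sum_{a\in\Acal}\ol{S}(F(Y\given A=a), P(Y\given A=a))$.

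The remaining step is to apply classical properness term by term. For each $a\in\Acal$ both $F(Y\given A=a)$ and $P(Y\given A=a)$ lie in $\Pcal_\Ycal$, so properness of $S$ gives $\ol{S}(F(Y\given A=a), P(Y\given A=a)) \le \ol{S}(P(Y\given A=a), P(Y\given A=a))$; summing over $a$ shows that $\ol{S}_{\mathrm{IPW}}(\cdot,M)$ is maximised at $F(Y\given A)=P_M(Y\given A)$, which by forecast-invariance is the unique correct forecast for $M$, establishing properness. For strict properness, if $F$ is not correct then — because full support of $P_M(A\given\Do(F))$ means every $a\in\Acal$ is observed, so there is no counterfactual regime and incorrectness is necessarily witnessed at an observed value — there is some $a^{*}\in\Acal$ with $F(Y\given A=a^{*})\neq P(Y\given A=a^{*})$, where strict properness of $S$ gives a strict inequality, while all other terms satisfy the weak inequality, so $\ol{S}_{\mathrm{IPW}}(F,M) < \ol{S}_{\mathrm{IPW}}(P_M(Y\given A),M)$; hence every maximiser is correct.

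I do not anticipate a genuine obstacle: the argument is essentially routine once the weights are cancelled. The only points requiring mild care are (i) that the cancellation and the interchange of integral and sum over $\Acal$ are legitimate, which holds because $\Acal$ is discrete and, as throughout the paper, one restricts to non-constant scoring rules whose relevant expected scores are finite; and (ii) the observation — used to upgrade the conclusion from observational properness to full (strict) properness — that full support of $P_M(A\given\Do(F))$ leaves no counterfactual values of $A$, so observational and unconditional (strict) properness coincide in this setting.
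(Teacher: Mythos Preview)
Your proposal is correct and follows essentially the same route as the paper: cancel the IPW weights to obtain $\ol{S}_{\mathrm{IPW}}(F,M)=\sum_{a\in\Acal}\ol{S}(F(Y\given A=a),P_M(Y\given A=a))$ via forecast-invariance, then apply classical (strict) properness termwise. Your additional remark that full support of $P_M(A\given\Do(F))$ collapses the observational/counterfactual distinction is a nice clarification but not needed beyond what the paper does implicitly.
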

\begin{proof}
	By forecast-invariance of $Y|A$ we have
	\begin{align*}
		\ol{S}_{\mathrm{IPW}}(F, M) & = \int \int S(F(Y\given A=a), y)P_M(\diff y\given A=a)\diff a \\
		                            & = \int S(F(Y\given A=a), P_M(Y\given A=a))\diff a,
	\end{align*}
	so for every $a\in\Acal$ the optimum of the integrand lies at the correct forecast $F^*(Y\given A=a) = P_M(Y\given A=a)$, and hence $P_M(Y\given A)$ maximises $S_{\mathrm{IPW}}$, so $\ol{S}_{\mathrm{IPW}}$ is proper. If $S$ is strictly proper, then for every $F$ with $F(Y\given A=a)\neq F^*(Y\given A=a)$ for some $a\in\Acal$ we have $S(F(Y\given A=a), P_M(Y\given A=a)) < S(F^*(Y\given A=a), P_M(Y\given A=a))$, so $\ol{S}_{\mathrm{IPW}}(F, M) < \ol{S}_{\mathrm{IPW}}(F^*, M)$, hence $\ol{S}_{\mathrm{IPW}}$ is strictly proper.
\end{proof}
In Section \ref{app:examples_divergence_ipw} we provide an example where $\ol{S}_{\mathrm{IPW}}$ is not counterfactually proper when $P_M(A\given \Do(F))$ does not have full support.

A similar principle as for the divergence applies: if the principal does not have access to $P_M(A\given \Do(F))$, an unbiased estimate of the mean IPW score suffices for the method to be (observationally strictly) proper. It is unclear whether unbiased estimators for the mean IPW score exist. In Section \ref{app:experiments_divergence_ipw} we provide a brief empirical investigation into the consistency of the plugin estimator for the mean IPW score.

\section{Forecasts depending on covariates}\label{app:covariates}
\subsection{Causal model including covariates}
We extend the results from the main paper to the setting where we let the forecast $F$ depend explicitly on covariates $X$. In the main paper, we use the notation $P_M(Y\given \Do(F))$ such that for every forecast $F \in \Pcal_{\Ycal}$ we can meaningfully talk about the distribution of $Y$, given the forecast $F$. If we let $F$ depend on covariates $X$, then intervening on $F$ would make the forecast invariant of $X$, which is not what we intend to do. Therefore, we consider a parameter $\theta \in \Theta\subseteq \RR^d$ for some $d$ which specifies a forecasting \emph{model} $F_\theta(Y\given X) \in \Pcal_{\Xcal \to \Ycal}$. In this setting, we will instead of $P_M(Y\given \Do(F))$, for each observed $x\in\Xcal$ consider the causal effect of a specific parameter $\theta$ and its induced forecast $F_\theta(Y\given X=x)$ on $Y$, which we express via $P_M(Y\given \Do(\theta))$, which is equal to $\int P_M(Y\given \Do(\theta, F_\theta(X=x)))P_M(\diff x)$ by consistency.

Formally, for marginal forecasts we let a parameter $\theta$ induce the forecasting model $F_\theta(Y\given X)\in \Pcal_{\Xcal\to \Ycal}$, where for every observed value $x$ and parameter $\theta$ the reported marginal forecast is $F_\theta(Y\given X=x) \in \Pcal_\Ycal$. For this setting let $\Mcal_{\mathrm{p}+}$ be the set of causal models whose projection onto $X,\theta,F,Y$ is a subgraph of Figure \ref{fig:covariates_marginal}.

For conditional forecasts, let $\theta$ induce the forecasting model $F_\theta(Y\given A, X)\in \Pcal_{\Acal\times \Xcal \to \Ycal}$, where for every observed value $x$ and parameter $\theta$ the reported conditional forecast is $F_\theta(Y\given A, X=x)\in \Pcal_{\Acal\to \Ycal}$; let $\Mcal_{\mathrm{pc}+}$ be the set of models whose projection onto $X,\theta,F,A,Y$ is a subgraph of Figure \ref{fig:covariates_conditional}.

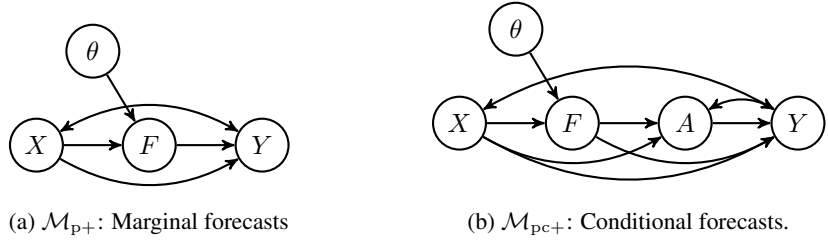
\begin{figure}[!htb]
	\centering
	\begin{subfigure}[b]{0.45\textwidth}
		\centering
		\begin{tikzpicture}
			\node[var] (X) at (0, 0) {$X$};
			\node[var] (F) at (1.5, 0) {$F$};
			\node[var] (Y) at (3, 0) {$Y$};
			\node[var] (theta) at (0.75, 1.25) {$\theta$};
			\draw[arr] (X) to (F);
			\draw[arr, bend right] (X) to (Y);
			\draw[arr] (theta) to (F);
			\draw[biarr, bend left] (X) to (Y);
			\draw[arr] (F) to (Y);
		\end{tikzpicture}
		\caption{$\Mcal_{\mathrm{p}+}$: Marginal forecasts}
		\label{fig:covariates_marginal}
	\end{subfigure}
	\begin{subfigure}[b]{0.45\textwidth}
		\centering
		\begin{tikzpicture}
			\node[var] (X) at (0, 0) {$X$};
			\node[var] (F) at (1.5, 0) {$F$};
			\node[var] (A) at (3, 0) {$A$};
			\node[var] (Y) at (4.5, 0) {$Y$};
			\node[var] (theta) at (0.75, 1.25) {$\theta$};
			\draw[arr] (X) to (F);
			\draw[arr, bend right] (X) to (A);
			\draw[arr, bend right] (X) to (Y);
			\draw[arr] (theta) to (F);
			\draw[arr] (F) to (A);
			\draw[arr] (A) to (Y);
			\draw[biarr, bend left] (X) to (Y);
			\draw[biarr, bend left] (A) to (Y);
			\draw[arr, bend right] (F) to (Y);
		\end{tikzpicture}
		\caption{$\Mcal_{\mathrm{pc}+}$: Conditional forecasts.}
		\label{fig:covariates_conditional}
	\end{subfigure}
	\caption{Causal graph of performative forecast depending on covariates $X$.}
\end{figure}

\subsection{Section \ref{sec:correct_forecasts}: Correctness of forecasts}
In the following sections, in each definition or theorem, the corresponding definition/theorem number from the main paper is added in parentheses.
\begin{definition}[Definitions \ref{def:correct_forecast}, \ref{def:correct_conditional_forecast}]
	For a marginal forecast $F_\theta(Y\given X=x)$, the parameter is correct for $M\in \Mcal_{\mathrm{pc}+}$ if $F_\theta(Y\given X=x) = P_M(Y\given X=x, \Do(\theta))$ holds for $P_M(X)$-almost all $x\in\Xcal$. For a conditional forecast $F_\theta(Y\given A, X=x)$, the parameter $\theta$ is
	\begin{itemize}
		\setlength{\itemsep}{-1pt}
		\item \emph{observationally correct} if for $P_M(X)$-almost all $x\in\Xcal$, for all $a\in\Acal$ such that $P(A=a\given X=x, \Do(\theta)) > 0$ we have $F_\theta(Y\given A=a, X=x) = P(Y\given A=a, X=x, \Do(\theta))$;
		\item \emph{counterfactually correct} if for $P_M(X)$-almost all $x\in\Xcal$, for all $a\in\Acal$ such that $P(A=a\given X=x, \Do(\theta)) = 0$ we have $F_\theta(Y\given A=a, X=x) = P(Y\given A=a, X=x, \Do(\theta))$;
		\item \emph{correct} if it is observationally and counterfactually correct.
	\end{itemize}
\end{definition}

\begin{definition}[Definition \ref{def:forecast_invariance}]
	We call the target $Y|A,X$ \emph{forecast-invariant} in $\Mcal'\subseteq \Mcal_{\mathrm{pc}+}$ if the target distribution of $Y|A,X$ does not depend on the forecast, so if $P_M(Y\given A, X, \Do(\theta)) = P_M(Y\given A, X)$ for all $M\in\Mcal'$.
\end{definition}

\begin{theorem}[Theorem \ref{thm:forecastable_iff_separating}]
	Let $\Mcal_G'\subseteq \Mcal_{\mathrm{pc}+}$ be the set of models compatible with causal graph $G$, then the following are equivalent:
	\begin{enumerate}[label=\roman*)]
		\item For every $M\in \Mcal'_G$ there exists an observationally correct parameter for $Y|A,X$;
		\item For every $M\in \Mcal'_G$ there exists a correct parameter for $Y|A,X$;
		\item $Y|A,X$ is forecast-invariant for all $M \in \Mcal'_G$;
		\item $\theta\Perp_G^d Y\given A, X$.
	\end{enumerate}
\end{theorem}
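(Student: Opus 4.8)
The plan is to prove the cycle of implications iv) $\Rightarrow$ iii) $\Rightarrow$ ii) $\Rightarrow$ i) $\Rightarrow$ iv), exactly as in the proof of Theorem~\ref{thm:forecastable_iff_separating}, carrying the covariate $X$ along as an extra conditioning variable throughout. For iv) $\Rightarrow$ iii): since $\theta$ is a source in $G$ (its only edge being $\theta\to F$), the hypothesis $\theta\Perp^d_G Y\given A, X$ lets the third rule of the do-calculus conclude $P_M(Y\given A, X, \Do(\theta)) = P_M(Y\given A, X)$ for every $M\in\Mcal'_G$; here one uses that conditioning on $X=x$ makes the induced forecast $F_\theta(Y\given X=x)$ a deterministic function of $\theta$, so that the combined intervention hidden in the notation $P_M(Y\given\Do(\theta))$ — namely $\Do(\theta)$ together with the induced $\Do(F)$ — is read off $G$ simply as $\Do(\theta)$. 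For iii) $\Rightarrow$ ii): under forecast-invariance the target $P_M(Y\given A, X)$ no longer depends on $\theta$ and is a correct target, so any parameter inducing $F_\theta(Y\given A, X)=P_M(Y\given A, X)$ — one of which exists under the standing well-specification assumption, as in the main text where forecasts range over all of $\Pcal_{\Acal\to\Ycal}$ — is correct. And ii) $\Rightarrow$ i) is immediate from the definition of correctness.

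The work is in i) $\Rightarrow$ iv), which I would prove by contraposition, mirroring the counterexample construction in the proof of Theorem~\ref{thm:forecastable_iff_separating}. Suppose $\theta\nPerp^d_G Y\given A, X$. Since $\theta$ is a source with the single edge $\theta\to F$, a short case analysis shows that every active path leaves $\theta$ through $F$ and has one of three shapes: (a) the edge $F\to Y$ (which a compatible $M$ may realise through a chain of latent mediators), so a directed path avoiding $A$ and $X$; (b) $\theta\to F\to A\tot Y$, active because the collider $A$ is conditioned on; or (c) $\theta\to F\ot X\tot Y$, active because the descendant $A$ of the collider $F$ is conditioned on. For each shape I would build a model $M\in\Mcal'_G$ realising a self-defeating prophecy: fix $P_0\neq P_1\in\Pcal_\Ycal$, set the first variable on the chosen path deterministically according to whether $F_\theta(Y\given\cdot)$ equals a designated conditional distribution, and propagate this bit along the path so that $P_M(Y\given A=a, X=x, \Do(\theta))$ toggles between $P_0$ and $P_1$ in a way that no parameter can match at the values of $A$ that are actually observed; for the confounding shapes (b) and (c) the bit is routed through the latent common causes behind $A\tot Y$ and $X\tot Y$ rather than through observed mediators. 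One then verifies that $M$ is Markov with respect to $G$, that its projection onto $\theta, F, A, X, Y$ is a subgraph of Figure~\ref{fig:covariates_conditional}, and that it admits no observationally correct parameter.

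I expect the main obstacle to be precisely the constructions for shapes (b) and (c): these collider-opening paths do not occur in the main-text proof, so one must design gadgets that carry the ``bit'' through latent confounders while still respecting Markov compatibility with $G$ and the fact that conditioning on $X$ may itself partially determine those confounders — shape (c), which combines $X\tot Y$ with the collider at $F$, is the most delicate and may require using the $A\tot Y$ and $X\tot Y$ latents simultaneously. By contrast, the covariate $X$ causes no real trouble: the gadget can be chosen independent of $x$, so $P_M(Y\given A, X=x, \Do(\theta))$ is $x$-free and the ``$P_M(X)$-almost all $x$'' quantifiers are met trivially, and arguing ``no observationally correct \emph{parameter}'' rather than merely ``no correct forecast'' is routine given well-specification.
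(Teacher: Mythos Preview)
The paper's proof is much shorter than your outline: it observes that the implications iv) $\Rightarrow$ iii) $\Rightarrow$ ii) $\Rightarrow$ i) carry over verbatim from the main-text Theorem~\ref{thm:forecastable_iff_separating}, and that for the contrapositive of i) $\Rightarrow$ iv) the only modification to the original directed-path counterexample is to additionally specify an independent distribution $P_M(X)$. No new path shapes are analysed; the paper simply reuses the assertion that a $d$-connection forces a directed path $F\to V_1\to\cdots\to V_n\to Y$ avoiding the conditioning set, and builds the self-defeating gadget along it with $X$ made irrelevant.

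Your case split contains an error. Shape (c), the path $\theta\to F\leftarrow X\leftrightarrow Y$, is \emph{not} active given $\{A,X\}$: while $F$ is indeed a collider opened by its descendant $A$, the node $X$ carries a tail (towards $F$) and a single arrowhead (from the bidirected edge), so $X$ is a non-collider, and since $X$ lies in the conditioning set it blocks the path. More generally, in any graph whose projection onto $\{\theta,F,A,X,Y\}$ is a subgraph of Figure~\ref{fig:covariates_conditional}, every walk that leaves $F$ via $F\leftarrow X$ is blocked at $X$, so the branch you flag as ``most delicate'' simply does not arise and can be dropped. Your shape (b), the collider path $\theta\to F\to A\leftrightarrow Y$ opened by conditioning on $A$, \emph{is} a genuine $d$-connecting configuration and would need a gadget routed through the latent behind $A\leftrightarrow Y$; note, however, that the paper's proof does not treat this case either, so handling it would take you beyond what the paper actually argues rather than matching it.
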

\begin{proof}
	The implications iv) $\implies$ iii) $\implies$ ii) $\implies$ i) apply without any change of the proof; for i) $\implies$ iv) one has to also specify an independent distribution $P_M(X)$ to finish the proof.
\end{proof}

\subsection{Section \ref{sec:scoring}: Scoring rules}
Consider a scoring rule $S:\Pcal_{\Ycal}\times \Xcal\times\Ycal \to \RR$, where given a marginal forecast $F_\theta(Y\given X=x)$ and an observed outcome $y$, the forecaster receives the score $S(F_\theta(Y\given X=x), x, y)$. Considering $M\in\Mcal_{\mathrm{pc}+}$, the expected score is
\begin{equation*}
	\ol{S}_{\mathrm{p}+}(F_\theta, M) = \int S(F_\theta(Y\given X=x), x, y)P_M(\diff x, \diff y\given \Do(\theta)).
\end{equation*}
For a conditional forecast $F_\theta(Y\given A, X=x)$, conditional scoring rule $S:\Pcal_{\Acal\to \Ycal}\times \Acal\times\Xcal\times\Ycal \to \RR$, observed action $a$ and outcome $y$, the forecaster receives the conditional score $S(F(Y\given A, X=x), a, x, y)$, and given a model $M\in\Mcal_{\mathrm{pc}+}$ the expected score is
\begin{equation*}
	\ol{S}_{\mathrm{pc}+}(F_\theta, M) = \int S(F_\theta(Y\given X=x), a, x, y)P_M(\diff a,\diff x, \diff y\given \Do(\theta)).
\end{equation*}

\begin{definition}[Definition \ref{def:performative_proper}]
	For marginal forecasts, a scoring rule $S$ is \emph{proper} relative to $\Mcal' \subseteq \Mcal_{\mathrm{p}+}$ if $\ol{S}_{\mathrm{p}+}$ is maximised at a correct forecast, and \emph{strictly proper} if all maximisers are correct. We call a conditional scoring rule
	\vspace*{-5pt}
	\begin{itemize}
		\setlength{\itemsep}{-4pt}
		\item \emph{observationally (strictly) proper} relative to $\Mcal' \subseteq \Mcal_{\mathrm{pc}+}$ if $\ol{S}_{\mathrm{pc}+}$ is (only) maximised at observationally correct forecasts for all $M\in \Mcal'$;
		\item \emph{counterfactually (strictly) proper} relative to $\Mcal' \subseteq \Mcal_{\mathrm{pc}+}$ if $\ol{S}_{\mathrm{pc}+}$ is (only) maximised at counterfactually correct forecasts for all $M\in \Mcal'$;
		\item \emph{(strictly) proper} relative to $\Mcal' \subseteq \Mcal_{\mathrm{pc}+}$ if it is observationally and counterfactually (strictly) proper, that is, if $\ol{S}_{\mathrm{pc}+}$ is (only) maximised at correct forecasts for all $M\in \Mcal'$.
	\end{itemize}
\end{definition}

\begin{theorem}[Theorem \ref{thm:nonexistence_observationally_proper}]
	Let $\Mcal'\subseteq \Mcal_{\mathrm{pc}+}$ be the set of models such that $Y|A,X$ is forecast-invariant, where $P_M(A\given X=x, \Do(\theta))$ has full support for all $\theta\in \Theta$, $P_M(X)$-almost all $x\in\Xcal$ and all $M\in \Mcal'$, or where $P_M(A\given X=x, \Do(\theta))$ is deterministic for all $\theta\in \Theta$, $P_M(X)$-almost all $x\in\Xcal$ and all $M\in \Mcal'$. If $S$ is a scoring rule with respect to $\Pcal_\Ycal$ such that there are $P_0, P_1\in\Pcal_\Ycal$ and forecast $\tilde{F}\in \Pcal_\Ycal$ with $\tilde{F}\neq P_1$ and $\ol{S}(P_0, P_0) < \ol{S}(\tilde{F}, P_1) < \ol{S}(P_1, P_1)$, then $S$ is not observationally proper for predicting $Y|A,X$ in $\Mcal'$.
\end{theorem}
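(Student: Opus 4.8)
The plan is to deduce the statement from the proof of Theorem~\ref{thm:nonexistence_observationally_proper} by exhibiting a witnessing model $M\in\Mcal'$ in which the covariate $X$ carries no information. Concretely, I would equip $X$ with an arbitrary independent marginal $P_M(X)$ — for instance a point mass — so that $X$ is $d$-separated from $\{\theta,F,A,Y\}$ in $G(M)$; this is compatible with Figure~\ref{fig:covariates_conditional} since deleting edges is always permitted. Then $P_M(Y\given A,X,\Do(\theta))=P_M(Y\given A,\Do(\theta))$ and $P_M(A\given X,\Do(\theta))=P_M(A\given\Do(\theta))$, so forecast-invariance of $Y|A,X$ reduces to forecast-invariance of $Y|A$, and the full-support (resp.\ deterministic) requirement on $P_M(A\given X{=}x,\Do(\theta))$ reduces to the corresponding requirement on $P_M(A\given\Do(\theta))$.

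Next I would transcribe the two constructions from the proof of Theorem~\ref{thm:nonexistence_observationally_proper}, reading $\Do(\theta)$ in place of $\Do(F)$ and letting $\theta$ parametrize the conditional forecast $F_\theta(Y\given A)$ directly (ignoring $X$): pick $P_0\neq P_1$ in $\Pcal_\Ycal$, set $P_M(Y\given\Do(A{=}1))=P_1$ and $P_M(Y\given\Do(A\neq 1))=P_0$, and take $P_M(A\given\Do(\theta))$ to be either the deterministic rule (mass on $1$ iff $F_\theta(Y\given A{=}1)=\tilde F$, else mass on $0$) or the full-support mixture $\frac12 Q+\frac12\delta_{\cdot}$ with $Q(A{=}1)$ small, exactly as before. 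The correct parameter $\theta^*$ (with $F_{\theta^*}(Y\given A{=}1)=P_1$, $F_{\theta^*}(Y\given A\neq 1)=P_0$) and the observationally incorrect parameter $\tilde\theta$ (with $F_{\tilde\theta}(Y\given A{=}1)=\tilde F$, $F_{\tilde\theta}(Y\given A\neq 1)=P_0$) then satisfy $\ol{S}_{\mathrm{pc}+}(F_{\theta^*},M)<\ol{S}_{\mathrm{pc}+}(F_{\tilde\theta},M)$, because integrating over the trivial $X$ leaves the score integrals identical to those computed in the original proof. Hence the maximiser of $\ol{S}_{\mathrm{pc}+}(\cdot,M)$ is not observationally correct, so $S$ is not observationally proper relative to $\Mcal'$.

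The only genuine points of care — and the main (though mild) obstacle — are bookkeeping: (i) one must assume the forecasting-model family $\{F_\theta\}$ is rich enough to realise both $\theta^*$ and $\tilde\theta$, which plays the role of the unrestricted forecast space $\Pcal_{\Acal\to\Ycal}$ in the original statement; and (ii) one must check that the constructed $M$ genuinely lies in $\Mcal'$, i.e.\ that its graph projects to a subgraph of Figure~\ref{fig:covariates_conditional}, that $Y|A,X$ is forecast-invariant, and that $P_M(A\given X{=}x,\Do(\theta))$ has the required full-support / deterministic form. All three are immediate from the independence of $X$ together with the verification already carried out in the proof of Theorem~\ref{thm:nonexistence_observationally_proper}; no new analytic content is needed, so — as for the earlier covariate-extended results in this appendix — the extension amounts to adjoining an independent $P_M(X)$.
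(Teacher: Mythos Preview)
Your proposal is correct and matches the paper's own proof, which consists of a single sentence stating that one adjoins an independent distribution $P_M(X)$ to the counterexample constructed in the original proof of Theorem~\ref{thm:nonexistence_observationally_proper}. Your write-up is in fact more careful than the paper's, in that you explicitly flag the implicit richness assumption on the parametrised family $\{F_\theta\}$ and verify that the resulting model lies in $\Mcal'$.
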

\begin{proof}
	The proof proceeds as in the original proof of Theorem \ref{thm:nonexistence_observationally_proper} with one adjustment: the constructed counterexample specifies a mechanism for $F \to A \to Y$; specifying any independent distribution $P_M(X)$ completes the proof.
\end{proof}

\subsection{Section \ref{sec:bdt}: Decision theory}
Given a conditional forecast $F_\theta$, let the agent take an action which maximizes the expectation of a utility $U(a, x, y)$, so we have $A = f_A(F_\theta,x) := \argmax_{a\in \Acal} \int U(a, x, y)F_\theta(\diff y\given x, a)$. Let $\Mcal_{\mathrm{dt}+}\subseteq \Mcal_{\mathrm{pc}+}$ be the class of models with $P_M(A\given X=x, \Do(\theta)) = \delta_{a_{\theta, x}}$.
\begin{definition}[Definition \ref{def:incentive_compatible}]
	Given a utility $U$ a set of causal models $\Mcal' \subseteq \Mcal_{\mathrm{pc}+}$, we call a conditional scoring rule $S$ \emph{incentive compatible} with $U$ if for all $M\in \Mcal'$ we have $\argmax_\theta\ol{S}_{\mathrm{pc}+}(F_\theta, M) = \argmax_\theta \int U(a, x, y)P_M(\diff a, \diff x, \diff y \given \Do(\theta))$.
\end{definition}
The \emph{utility score} is given by $S(F_\theta, a, x, y) := U(f_A(F_\theta,x), x, y)$.

\begin{theorem}[Theorem \ref{thm:utility_score_proper}]
	The utility score is incentive-compatible with $U$, and proper if $Y|A,X$ is forecast-invariant in $\Mcal' \subseteq \Mcal_{\mathrm{dt}+}$.
	If for a given causal graph $G$ one considers the class of models $\Mcal_G' \subseteq \Mcal_{\mathrm{dt}+}$ which are consistent with $G$, then forecast invariance of $Y|A,X$ is \emph{necessary} for the utility score to be proper.
\end{theorem}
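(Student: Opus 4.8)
The plan is to replay the proof of Theorem~\ref{thm:utility_score_proper}, treating the covariate $x$ as an extra conditioning variable carried through every step unchanged, and to invoke the covariate analogue of Theorem~\ref{thm:forecastable_iff_separating} for the necessity direction. For incentive compatibility, I would note that the utility score $S(F_\theta, a, x, y) = U(f_A(F_\theta, x), x, y)$ does not depend on the realised action, and that in $\Mcal_{\mathrm{dt}+}$ the action satisfies $A = f_A(F_\theta, x)$ almost surely under $P_M(\cdot \given \Do(\theta))$, so that
\[
\ol{S}_{\mathrm{pc}+}(F_\theta, M) = \int U(f_A(F_\theta, x), x, y)\, P_M(\diff x, \diff y \given \Do(\theta)) = \int U(a, x, y)\, P_M(\diff a, \diff x, \diff y \given \Do(\theta))
\]
for every $M \in \Mcal_{\mathrm{dt}+}$. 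The right-hand side is exactly the agent's expected utility, so $\argmax_\theta \ol{S}_{\mathrm{pc}+}(F_\theta, M)$ coincides with $\argmax_\theta \int U(a,x,y)\, P_M(\diff a, \diff x, \diff y \given \Do(\theta))$, which is incentive compatibility with $U$; as in the marginal case this uses neither forecast-invariance nor any restriction on the policy.

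For properness when $Y|A,X$ is forecast-invariant in $\Mcal' \subseteq \Mcal_{\mathrm{dt}+}$, forecast-invariance supplies, for each $M$, a correct parameter $\theta^*$ with $F_{\theta^*}(Y \given A, X=x) = P_M(Y \given A, X=x)$ for $P_M(X)$-almost all $x$. I would fix any competing $\theta$, disintegrate $\ol{S}_{\mathrm{pc}+}(F_\theta, M)$ over $X$ (which carries no forecast-dependence since it has no parents in $G$), and use determinism of $A = f_A(F_\theta, x)$ together with forecast-invariance to express the conditional expected score given $X=x$ as $\int U(f_A(F_\theta,x), x, y)\, P_M(\diff y \given A = f_A(F_\theta,x), X=x)$. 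Since forecast-invariance also yields $f_A(F_{\theta^*},x) = \argmax_a \int U(a,x,y)\, P_M(\diff y \given A=a, X=x)$, the pointwise inequality
\[
\int U(f_A(F_\theta, x), x, y)\, P_M(\diff y \given A = f_A(F_\theta, x), X = x) \le \int U(f_A(F_{\theta^*}, x), x, y)\, P_M(\diff y \given A = f_A(F_{\theta^*}, x), X = x)
\]
holds for $P_M(X)$-almost all $x$; integrating it against $P_M(X)$ gives $\ol{S}_{\mathrm{pc}+}(F_\theta, M) \le \ol{S}_{\mathrm{pc}+}(F_{\theta^*}, M)$, so the expected score is maximised at the correct parameter and $S$ is proper.

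For the necessity of forecast-invariance, if $Y|A,X$ fails to be forecast-invariant for all $M \in \Mcal'_G$, then the covariate version of Theorem~\ref{thm:forecastable_iff_separating} (the implication that failure of clause~iii) forces failure of clause~i)) yields some $M \in \Mcal'_G$ with no observationally correct parameter, and for that $M$ the utility score cannot be maximised at an observationally correct forecast, hence is not proper. The step I expect to be the main obstacle is checking that the counterexample built in the proof of Theorem~\ref{thm:forecastable_iff_separating} can be realised inside $\Mcal_{\mathrm{dt}+}$, i.e.\ that the deterministic choice of $A$ along the offending directed path $\theta \to V_1 \to \cdots \to V_n \to Y$ can be made to equal the Bayes act $f_A(F_\theta, x)$ of a suitable utility $U$. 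As remarked after the marginal version, one picks $U$ whose Bayes act reproduces the required binary decision rule on the relevant forecasts and then completes the construction with an arbitrary independent $P_M(X)$, exactly as in the proof of the covariate version of Theorem~\ref{thm:forecastable_iff_separating}; everything else is the routine bookkeeping of carrying the conditioning on $x$ through the marginal-case argument.
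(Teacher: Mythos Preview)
Your proposal is correct and follows exactly the approach the paper takes: the paper's own proof says only ``The proof proceeds analogous to the proof of Theorem~\ref{thm:utility_score_proper}'', and your write-up is precisely that analogue, carrying the conditioning on $x$ through the incentive-compatibility identity, the pointwise Bayes-act inequality integrated over $P_M(X)$, and the necessity argument via the covariate version of Theorem~\ref{thm:forecastable_iff_separating}. You also correctly flag the one subtlety the paper notes parenthetically---that the counterexample from the proof of Theorem~\ref{thm:forecastable_iff_separating} must be adjusted so that $A$ equals the Bayes act $f_A(F_\theta,x)$ in order to live in $\Mcal_{\mathrm{dt}+}$.
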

\begin{proof}
	The proof proceeds analogous to the proof of Theorem \ref{thm:utility_score_proper}.
\end{proof}

\begin{theorem}[Theorem \ref{thm:utility_score_strictly_proper}]
	If $Y|A$ is forecast-invariant in $\Mcal' \subseteq \Mcal_{\mathrm{dt}+}$, $U \in [0,1]$, $S' \in [0,1]$ is a proper scoring rule in the classical sense and $\min\left\{\left|E_M(U(a, x, Y)\given a) - E_M(U(a', x, Y)\given a')\right|:a\neq a'\right\} > \Delta > 0$ for $P_M(X)$-almost all $x\in\Xcal$ and all $M\in \Mcal'$, then $S(F, a, x, y) = U(f_A(F_\theta,x), x, y) + \Delta_x\cdot S'(F(Y\given A=f_A(F_\theta,x)), y)$ is proper and incentive-compatible with $U$. If $S'$ is strictly proper in the classical sense, then $S$ is observationally strictly proper.
\end{theorem}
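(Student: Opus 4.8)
The plan is to mirror the proof of Theorem~\ref{thm:utility_score_strictly_proper}, carrying the conditioning on $X$ through pointwise in $x$ and then integrating against $P_M(X)$. Write $a_{\theta,x} := f_A(F_\theta, x)$; since $M\in\Mcal_{\mathrm{dt}+}$ this action is deterministic given $(x,\theta)$, so
\begin{align*}
\ol{S}_{\mathrm{pc}+}(F_\theta, M) &= \int U(a,x,y)\,P_M(\diff a,\diff x,\diff y\given\Do(\theta)) \\
&\quad\; + \int \Delta_x\, S'\!\left(F_\theta(Y\given A=a_{\theta,x}, X=x), y\right) P_M(\diff a,\diff x,\diff y\given\Do(\theta)).
\end{align*}
The first term is $\int U\,\diff P_M(\cdot\given\Do(\theta))$, and by forecast-invariance of $Y\given A,X$ (and since $X$ is a source node, $P_M(X\given\Do(\theta))=P_M(X)$) it equals $\int E_M\!\left(U(a_{\theta,x},x,Y)\given a_{\theta,x},x\right)P_M(\diff x)$, hence depends on $\theta$ only through the action profile $x\mapsto a_{\theta,x}$. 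Since $S'\in[0,1]$, the second term lies in $[0,\int\Delta_x\,P_M(\diff x)]$, and for a fixed action profile it equals $\int \Delta_x\,\ol{S}'\!\left(F_\theta(Y\given A=a_{\theta,x},X=x),\,P_M(Y\given A=a_{\theta,x},X=x)\right)P_M(\diff x)$, again by forecast-invariance.

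For $P_M(X)$-almost all $x$ let $a^*_x := \argmax_a E_M(U(a,x,Y)\given a,x)$; the gap hypothesis makes $a^*_x$ the unique maximiser, with margin exceeding $\Delta\ge\Delta_x$ over every other action. First I would show that every maximiser of $\ol{S}_{\mathrm{pc}+}$ induces $a_{\theta,x}=a^*_x$ for $P_M(X)$-almost all $x$: on the event $\{a_{\theta,x}\neq a^*_x\}$ the utility term loses strictly more than $\Delta_x$ per unit $P_M(X)$-mass relative to any forecast that does induce $a^*_x$ there, while the difference of the two $S'$-terms is at most $\Delta_x$ in absolute value because $\ol{S}'\in[0,1]$; so if this event has positive $P_M(X)$-measure the total expected score is strictly smaller. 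A parameter $\theta^\star$ whose model is fully correct — $F_{\theta^\star}(Y\given A=a,X=x)=P_M(Y\given A=a,X=x)$, a choice rendered valid by forecast-invariance — induces $a_{\theta^\star,x}=\argmax_a\int U(a,x,y)P_M(\diff y\given A=a,X=x)=a^*_x$ a.e.\ and maximises the $S'$-term pointwise, hence is a maximiser: this gives properness. Conversely every maximiser induces the profile $x\mapsto a^*_x$ and therefore maximises $\int U\,\diff P_M(\cdot\given\Do(\theta))$, giving incentive compatibility with $U$.

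For the strict claim, assume $S'$ is strictly proper and restrict attention to maximisers; by the previous step they all induce $a_{\theta,x}=a^*_x$ a.e., so among them the score differs only through $\int\Delta_x\,\ol{S}'\!\left(F_\theta(Y\given A=a^*_x,X=x),\,P_M(Y\given A=a^*_x,X=x)\right)P_M(\diff x)$. Strict properness of $S'$ forces the integrand to be maximal only at $F_\theta(Y\given A=a^*_x,X=x)=P_M(Y\given A=a^*_x,X=x)$ for $P_M(X)$-almost all $x$, and since $a^*_x$ is the (positive-probability) action observed at $x$, this says precisely that $\theta$ is observationally correct.

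The hard part will be the measure-theoretic bookkeeping around ``$P_M(X)$-almost all $x$'' combined with the per-covariate weights $\Delta_x$: one must check that the pointwise strict inequality (utility margin $>\Delta_x\ge$ the $S'$-contribution, where $S'\in[0,1]$ is used to bound \emph{all} $S'$- and entropy-related quantities, not just $S'$ itself) survives integration — so that the exceptional null set is harmless, and whenever $\theta$ is not action-optimal the set of $x$ at which it induces a suboptimal action has positive $P_M(X)$-mass — and that $\Delta_x$ is taken no larger than the per-$x$ utility gap. As in the non-covariate proof, one must also be careful to line up ``$\argmax$ over $\theta$'' with ``$\argmax$ over the induced action profile and forecasting model''.
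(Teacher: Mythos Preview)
Your proposal is correct and follows the paper's own approach: the paper's proof here is literally ``The proof proceeds analogous to the proof of Theorem~\ref{thm:utility_score_strictly_proper}'', and you have faithfully carried out that analogue by running the utility-gap-versus-$S'$-bound argument pointwise in $x$ and integrating against $P_M(X)$. The decomposition into the expected-utility term plus the $\Delta_x$-scaled $S'$ term, the comparison showing any parameter inducing a suboptimal action on a set of positive $P_M(X)$-mass is strictly dominated, and the use of strict properness of $S'$ at the observed action $a^*_x$ to pin down observational correctness are exactly the steps of the original proof lifted to the covariate setting.
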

\begin{proof}
	The proof proceeds analogous to the proof of Theorem \ref{thm:utility_score_strictly_proper}.
\end{proof}

\subsection{Section \ref{sec:divergence}: Divergence}
\begin{definition}[Definition \ref{def:performative_divergence}]
	Given a scoring rule $S:\Pcal_\Ycal \times \Ycal\to \RR$, conditional forecast $F_\theta(Y\given A, X=x) \in \Pcal_{\Acal \to \Ycal}$ and causal model $M\in \Mcal_{\mathrm{pc}+}$, the \emph{performative divergence} is defined as
	\begin{equation*}
		D_{\mathrm{pc}+}(F_\theta, M) := \int \left(S(P_M(Y\given a, x, \Do(\theta)), y) - S(F_\theta(Y\given a, x), y)\right)P_M(\diff a, \diff x, \diff y\given \Do(\theta)).
	\end{equation*}
	With \emph{performative entropy} $H_{\mathrm{pc}+}(M\given \Do(\theta)) := \int H(P_M(Y\given a, x, \Do(\theta)))P(\diff a, \diff x\given \Do(\theta))$, the performative divergence can be written as $D_{\mathrm{pc}+}(F, M) = -H_{\mathrm{pc}+}(M\given \Do(\theta)) - \ol{S}_{\mathrm{pc}+}(F_\theta(Y\given A, X), M)$.
\end{definition}

\begin{theorem}[Theorem \ref{thm:divergence_proper}]
	If for every $M\in \Mcal' \subseteq \Mcal_{\mathrm{pc}+}$ there exists an observationally correct forecast for $Y|A,X$ and if $S$ is proper in the classical sense, then the divergence $D_{\mathrm{pc}+}$ is positive and proper. If $S$ is strictly proper in the classical sense, then $D_{\mathrm{pc}+}$ is observationally strictly proper.
\end{theorem}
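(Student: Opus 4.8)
The plan is to follow the proof of Theorem \ref{thm:divergence_proper} essentially verbatim, inserting one extra integration over the covariate $X$ and being careful with the ``for $P_M(X)$-almost all $x$'' qualifiers. First I would fix $M \in \Mcal' \subseteq \Mcal_{\mathrm{pc}+}$ and a parameter $\theta$. Since we use the version of the conditional distribution with $P_M(Y\given A=a, X=x, \Do(\theta)) = P_M(Y\given \Do(A=a, X=x, \theta))$, both $F_\theta(Y\given A=a, X=x)$ and $P_M(Y\given A=a, X=x, \Do(\theta))$ lie in $\Pcal_\Ycal$ for every $(a,x)$, so classical properness of $S$ gives the pointwise bound $\ol{S}(F_\theta(Y\given a,x), P_M(Y\given a,x,\Do(\theta))) \leq H(P_M(Y\given a,x,\Do(\theta)))$. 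Using the tower property to rewrite $\ol{S}_{\mathrm{pc}+}(F_\theta, M)$ as the $P_M(\diff a, \diff x\given\Do(\theta))$-expectation of the inner expected score --- exactly as in the display (\ref{eq:divergence}) --- and then integrating the pointwise bound yields $D_{\mathrm{pc}+}(F_\theta, M) = \int (H(P_M(Y\given a,x,\Do(\theta))) - \ol{S}(F_\theta(Y\given a,x), P_M(Y\given a,x,\Do(\theta)))) P_M(\diff a, \diff x\given\Do(\theta)) \geq 0$, which is the claimed positivity.

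For properness I would invoke the hypothesis that an observationally correct parameter $\theta^*$ exists: for $P_M(X)$-almost all $x$ and all $a$ with $P_M(A=a\given X=x, \Do(\theta^*)) > 0$ the forecast matches $P_M(Y\given a,x,\Do(\theta^*))$. The set of $(a,x)$ where this equality fails is $P_M(\cdot\given\Do(\theta^*))$-null (the bad $x$ form a null set, and for the remaining $x$ the bad $a$ carry zero conditional mass), so the integrand in the displayed formula vanishes a.e.\ and $D_{\mathrm{pc}+}(F_{\theta^*}, M) = 0$; together with positivity this shows $D_{\mathrm{pc}+}$ is minimised at a correct forecast, i.e.\ it is proper. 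For observational strict properness with strictly proper $S$, I would argue the contrapositive: if $\theta$ is not observationally correct then $\{(a,x) : P_M(A=a\given X=x,\Do(\theta)) > 0,\ F_\theta(Y\given a,x) \neq P_M(Y\given a,x,\Do(\theta))\}$ has positive $P_M(\cdot\given\Do(\theta))$-measure, and on this set strict properness forces the integrand to be strictly positive, so $D_{\mathrm{pc}+}(F_\theta, M) > 0$; hence every minimiser is observationally correct.

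The main obstacle is purely measure-theoretic bookkeeping around the continuous covariate $X$: unlike $\Acal$, the space $\Xcal$ need not be discrete, so I have to make the passage between ``holds for $P_M(X)$-almost all $x$ and all positively-weighted $a$'' and ``holds $P_M(A, X\given\Do(\theta))$-almost everywhere'' rigorous, and likewise argue that the complement of $\{(a,x) : P_M(A=a\given X=x,\Do(\theta)) > 0\}$ is a joint null set so that non-correctness really does produce a positive-measure violation set. This is the only place the argument differs from the discrete-$\Acal$ proof of Theorem \ref{thm:divergence_proper}; the pointwise inequality, its integration against the induced joint law, and the vanishing at a correct forecast all carry over unchanged.
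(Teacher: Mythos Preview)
Your proposal is correct and follows precisely the approach the paper intends: the paper's own proof of this covariate-extended statement is the single line ``The proof proceeds analogous to the proof of Theorem \ref{thm:divergence_proper},'' and your plan is exactly that analogy spelled out---the pointwise bound from classical properness, its integration against $P_M(\diff a, \diff x\given\Do(\theta))$, the vanishing at an observationally correct $\theta^*$, and the strict positivity on a positive-measure set when $S$ is strictly proper. Your added care about the ``$P_M(X)$-almost all $x$'' qualifier and the passage to $P_M(A,X\given\Do(\theta))$-a.e.\ statements is a legitimate measure-theoretic refinement that the paper glosses over, but it does not change the structure of the argument.
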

\begin{proof}
	The proof proceeds analogous to the proof of Theorem \ref{thm:divergence_proper}.
\end{proof}

\begin{corollary}
	Let for every $M\in \Mcal' \subseteq \Mcal_{\mathrm{pc}+}$ there be an observationally correct forecast for $Y|A,X$, let $S$ be proper in the classical sense, and let $(A_1, X_1, Y_1), ..., (A_n, X_n, Y_n) \sim P_M(A, X, Y\given \Do(F_\theta))$. Any unbiased estimator $\hat{D}_{\mathrm{pc}+}(A_1, ..., Y_n)$ of the performative divergence $D_{\mathrm{pc}+}(F_\theta, M)$ is proper, for any sample size $n\in\NN$ such that $\hat{D}_{\mathrm{pc}+}$ is well-defined. If $S$ is strictly proper in the classical sense, then $\hat{D}_{\mathrm{pc}+}$ is observationally strictly proper.
\end{corollary}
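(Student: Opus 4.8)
The plan is to reduce the statement to the covariate analogue of Theorem~\ref{thm:divergence_proper} via unbiasedness, exactly as in the proof of Corollary~\ref{thm:unbiased_proper}. Having agreed to be scored by $\hat{D}_{\mathrm{pc}+}$, the forecaster picks the parameter $\theta$ minimising the expected value of the estimator, i.e.\ $\EE_M[\hat{D}_{\mathrm{pc}+}(A_1,X_1,Y_1,\dots,A_n,X_n,Y_n)\given \Do(F_\theta)]$, where the expectation is over the i.i.d.\ sample $(A_i,X_i,Y_i)\sim P_M(A,X,Y\given \Do(F_\theta))$. First I would use the assumed unbiasedness of $\hat{D}_{\mathrm{pc}+}$ to rewrite this objective: for every $\theta$ at which $\hat{D}_{\mathrm{pc}+}$ is well-defined, $\EE_M[\hat{D}_{\mathrm{pc}+}\given \Do(F_\theta)] = D_{\mathrm{pc}+}(F_\theta, M)$, so minimising the forecaster's expected score coincides with minimising the performative divergence $D_{\mathrm{pc}+}(\cdot, M)$.

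Second, I would invoke the covariate analogue of Theorem~\ref{thm:divergence_proper} stated just above. Since $S$ is proper in the classical sense and, by hypothesis, an observationally correct forecast exists for every $M\in\Mcal'$, that theorem gives $D_{\mathrm{pc}+}(F_\theta, M)\geq 0$ with equality at any observationally correct forecast. Hence every minimiser of $\theta\mapsto \EE_M[\hat{D}_{\mathrm{pc}+}\given \Do(F_\theta)]$ is observationally correct, which is precisely properness of $\hat{D}_{\mathrm{pc}+}$ in the sense of the covariate version of Definition~\ref{def:performative_proper}. If in addition $S$ is strictly proper in the classical sense, the same theorem yields $D_{\mathrm{pc}+}(F_\theta, M)>0$ whenever $F_\theta$ is not observationally correct, so every minimiser is observationally correct and $\hat{D}_{\mathrm{pc}+}$ is observationally strictly proper.

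There is essentially no genuine obstacle beyond bookkeeping; the one point needing care is the scope of the unbiasedness hypothesis. It must be read as holding conditionally on the intervention and uniformly over $\theta\in\Theta$ — so that the identity $\EE_M[\hat{D}_{\mathrm{pc}+}\given \Do(F_\theta)] = D_{\mathrm{pc}+}(F_\theta, M)$ holds for all candidate parameters simultaneously, not merely at the optimum — and the ``well-defined'' caveat (for concrete constructions a minimal-subsample condition analogous to the $n_a\geq 2$ requirement of Lemma~\ref{thm:unbiased_estimators}) must be carried along, so the claim is made only for admissible $n$. Since neither the variance of $\hat{D}_{\mathrm{pc}+}$ nor the precise sample size enters the argument, the conclusion holds for every admissible $n\in\NN$, mirroring the remark following Corollary~\ref{thm:unbiased_proper}.
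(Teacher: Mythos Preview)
Your proposal is correct and follows essentially the same route as the paper: use unbiasedness to identify the forecaster's expected score with $D_{\mathrm{pc}+}(F_\theta,M)$, then invoke the covariate analogue of Theorem~\ref{thm:divergence_proper}. One small wording slip: from ``$D_{\mathrm{pc}+}\geq 0$ with equality at any observationally correct forecast'' you can only conclude that observationally correct forecasts are minimisers (which is properness), not that \emph{every} minimiser is observationally correct---that stronger claim is exactly what the strict-properness step delivers; the logic is otherwise sound.
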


\subsection{Section \ref{sec:estimation}: Parameter estimation}
\begin{definition}[Definition \ref{def:performative_risk}]
	Given a loss function $\ell(\theta, a, x, y)$, the corresponding \emph{performative risk} is defined as $R(\theta) := \int \ell(\theta, a, x, y) P_M(\diff a, \diff x, \diff y\given \Do(\theta))$, and a minimiser $\theta_{PO}$ of the performative risk is referred to as \emph{performatively optimal}. The \emph{decoupled performative risk} is defined as $R^{\mathrm{d}}(\theta_{t+1}, \theta_t) := \int \ell(\theta_{t+1}, a, x, y) P_M(\diff a, \diff x, \diff y\given \Do(\theta_{t}))$, and parameter $\theta_{PS}$ is \emph{performatively stable} if $\theta_{PS} := \argmin_{\theta}R^{\mathrm{d}}(\theta, \theta_{PS})$.
\end{definition}
We define the performative divergence and decoupled performative divergence as:
\begin{align*}
	R_{D^+}(\theta)                              & := D_{\mathrm{pc}+}(F_{\theta}, M)                                                                                     \\
	R^{\mathrm{d}}_{D^+}(\theta_{t+1}, \theta_t) & := \int D(F_{\theta_{t+1}}(Y\given a, x), P_M(Y\given a, x, \Do(\theta_t)))P_M(\diff a, \diff x \given \Do(\theta_t)).
\end{align*}

\begin{theorem}[Theorem \ref{thm:performative_stability}]
	Suppose that for every model $M$ in the class $\Mcal'\subseteq \Mcal_{\mathrm{pc}+}$, every parameter $\theta\in\Theta$ and $P_M(X)$-almost all $x\in\Xcal$ the distribution $P_M(A\given X=x, \Do(\theta))$ has full support, and that $Y|A,X$ is forecast-invariant in $M\in\Mcal'$, and let $D$ be the divergence induced by a strictly proper scoring rule. For any parameter $\theta_t$ for conditional forecast $F_{\theta_t}(Y\given A, X=x)$, we have that $\theta_{t+1} := \argmin_\theta R^{\mathrm{d}}_{D^+}(\theta, \theta_t)$ is performatively stable and performatively optimal with respect to $R_{D^+}(\theta)$.
\end{theorem}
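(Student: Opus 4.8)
The plan is to mirror the three-step proof of Theorem~\ref{thm:performative_stability}, carrying the covariate $X$ through every integral. The first and decisive step is to establish a \emph{recurrence relation}: that the parameter $\theta_{t+1} := \argmin_\theta R^{\mathrm{d}}_{D^+}(\theta, \theta_t)$ induces the forecast $F_{\theta_{t+1}}(Y\given A=a, X=x) = P_M(Y\given A=a, X=x, \Do(\theta_t))$ for $P_M(X)$-almost all $x$ and all $a\in\Acal$. Reading $R^{\mathrm{d}}_{D^+}(\theta,\theta_t)$ as the $P_M(\diff a, \diff x\given\Do(\theta_t))$-average of the classical divergence between $F_\theta(Y\given a,x)$ and $P_M(Y\given a,x,\Do(\theta_t))$, positivity of the divergence (since $S$ is proper in the classical sense) makes the integrand nonnegative, and strict properness of $S$ makes it vanish exactly at the pointwise equality. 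Because $P_M(A\given X=x,\Do(\theta_t))$ has full support for $P_M(X)$-a.a.\ $x$, the averaging measure charges every $a$ over a set of $x$ of full $P_M(X)$-measure, so the integral attains its global minimum $0$ iff this pointwise equality holds for all $a$ and $P_M(X)$-a.e.\ $x$; under the standing no-misspecification assumption (that $P_M(Y\given A,X)$ is representable as some $F_\theta$), $\theta_{t+1}$ is precisely such a parameter. Concretely this is Theorem~\ref{thm:conditional_divergence_proper} applied for each fixed $x$ — where it reduces to an ordinary non-performative conditional forecasting problem over $A$ and $Y$ — and then integrated over $P_M(X\given\Do(\theta_t))$.

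Next I would feed in \emph{forecast-invariance}: since $Y|A,X$ is forecast-invariant in $\Mcal'$ we have $P_M(Y\given A,X,\Do(\theta_t)) = P_M(Y\given A,X)$, so the recurrence collapses to $F_{\theta_{t+1}}(Y\given A=a,X=x) = P_M(Y\given A=a,X=x)$ for all $a$ and $P_M(X)$-a.a.\ $x$. Since full support is assumed for \emph{every} parameter, $P_M(A\given X=x,\Do(\theta_{t+1}))$ also charges every $a$ a.e., so there are no unobserved actions and this equality means $\theta_{t+1}$ is a correct parameter for $Y|A,X$.

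Both conclusions then follow. For \emph{performative optimality}: by the covariate version of Theorem~\ref{thm:divergence_proper}, $R_{D^+}(\theta) = D_{\mathrm{pc}+}(F_\theta, M)\ge 0$ for all $\theta$, and correctness of $\theta_{t+1}$ together with forecast-invariance forces $D_{\mathrm{pc}+}(F_{\theta_{t+1}}, M) = 0$, so $\theta_{t+1}$ is a global minimiser of $R_{D^+}$. For \emph{performative stability}: $R^{\mathrm{d}}_{D^+}(\theta,\theta')\ge 0$ for all $\theta,\theta'$ (it is an average of classical divergences), while evaluating at $\theta = \theta' = \theta_{t+1}$ gives $R^{\mathrm{d}}_{D^+}(\theta_{t+1},\theta_{t+1}) = R_{D^+}(\theta_{t+1}) = 0$ by the previous line; hence $\theta_{t+1}\in\argmin_\theta R^{\mathrm{d}}_{D^+}(\theta,\theta_{t+1})$, which is exactly performative stability per Definition~\ref{def:performative_risk}. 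This route also sidesteps any identifiability question about the $\argmin$, unlike the ``$\theta_{t+1}=\theta_{t+2}$'' phrasing used in the proof of Theorem~\ref{thm:performative_stability}.

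The main obstacle is the first step: showing that minimising the \emph{integral} $\int D(\cdot,\cdot)\,P_M(\diff a,\diff x\given\Do(\theta_t))$ over $\theta$ forces the \emph{pointwise} identity $F_{\theta_{t+1}} = P_M(Y\given A,X,\Do(\theta_t))$ a.e.\ in $x$ and for every $a$. This rests on (i) well-specification, so that the infimum $0$ is actually attained by some $\theta\in\Theta$, and (ii) the full-support hypothesis, so that no action $a$ is invisible to the averaging measure on a non-null set of $x$; once these are granted, everything else is a routine transcription of the $X$-free argument with an extra integration over $P_M(X)$, exactly as in the covariate versions of Theorems~\ref{thm:divergence_proper} and~\ref{thm:performative_stability} already handled in this appendix.
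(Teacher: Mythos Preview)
Your proposal is correct and follows the same route as the paper, which for the covariate version simply says ``the proof proceeds analogous to the proof of Theorem~\ref{thm:performative_stability}'': correctness of $\theta_{t+1}$ via strict properness of the conditional divergence under full support (Theorem~\ref{thm:conditional_divergence_proper}, applied pointwise in $x$ and integrated), then performative optimality via the covariate version of Theorem~\ref{thm:divergence_proper}. The one genuine difference is your stability argument: you verify $R^{\mathrm{d}}_{D^+}(\theta_{t+1},\theta_{t+1})=R_{D^+}(\theta_{t+1})=0\le R^{\mathrm{d}}_{D^+}(\theta,\theta_{t+1})$ directly, whereas the paper's original proof concludes stability from ``$\theta_{t+1}=\theta_{t+2}$'', which tacitly assumes the correct parameter is unique in $\Theta$. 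Your variant sidesteps that identifiability assumption, as you note, and is therefore slightly more robust; otherwise the two arguments are the same.
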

\begin{proof}
	The proof proceeds analogous to the proof of Theorem \ref{thm:performative_stability}.
\end{proof}

\section{Examples}\label{app:examples}
Python code to generate the plots of the examples in this section is provided in the supplements. The code generates interactive 3D plots, which can be helpful for visual aid.

For interpreting these plots (see e.g.\ Figure \ref{fig:not_proper}), the reading guide of these plots is as follows. Global optima are indicated with a \textcolor{blue}{blue} line or dot, the correct forecast is indicated with a \textcolor{mygreen}{green} dot.
(Strict) Properness means that (only) the correct forecast has the same expected score as the global optima (z-axis). The \textcolor{darkgray}{grey}, projections of the true and optimal forecasts can be used to read the score of these forecasts (on the z-axis).
Observational/counterfactual (strict) properness means that the global optima are (only) at the correct values of $F(Y\given A=0)$ and $F(Y\given A=1)$. The bottom projections can be used to read off whether the optimal forecasts coincide with the true forecast. This can separately be checked for $A=0$ and $A=1$. In these examples, at any $F$, the `counterfactual' value of $A$ (that is, the values $a \in \Acal$ such that $P_M(A=a\given \Do(F)) =0$) can be read off by checking the direction in which the expected score at $F$ is \emph{not} piecewise constant.

\subsection{Classical scoring rules which are not performatively proper}\label{app:not_proper}
The following example is provided in the main paper (Example \ref{ex:not_proper}):
\begin{example}\label{app:ex:not_proper}
	Let $\Acal = \Ycal = \{0,1\}$ and $M$ be such that $Y|A$ is forecast-invariant, with $P_M(Y=1\given A=0)=0.5, P_M(Y=1\given A=1)=0.25$ and the `decision rule' $A=\argmax_{a}F(Y=1\given A=a)$. If the forecaster reports the correct forecast the expected Brier score is $-0.5\cdot(0.5)^2 - 0.5 \cdot(0.5)^2 = -0.25$, and if the forecaster reports $F(Y=1\given A=0)=0.2, F(Y=1\given A=1)=0.25$ then the expected score is $-0.25\cdot(0.75)^2 - 0.75\cdot(0.25)^2 \approx -0.19$. See also Figure \ref{fig:not_proper}: the scoring rule is observationally strictly proper since for the observed value $A=1$ the score is maximised at a correct forecast, but it is not counterfactually proper since it pays off to misreport for the value $A=0$ which is not observed. This can also be observed in the figure using the bottom projection of the optimal forecasts and the unique, correct forecast.
\end{example}
\begin{figure}[b]
	\centering
	\includegraphics[width=.5\linewidth,trim=5 0 3 45,clip]{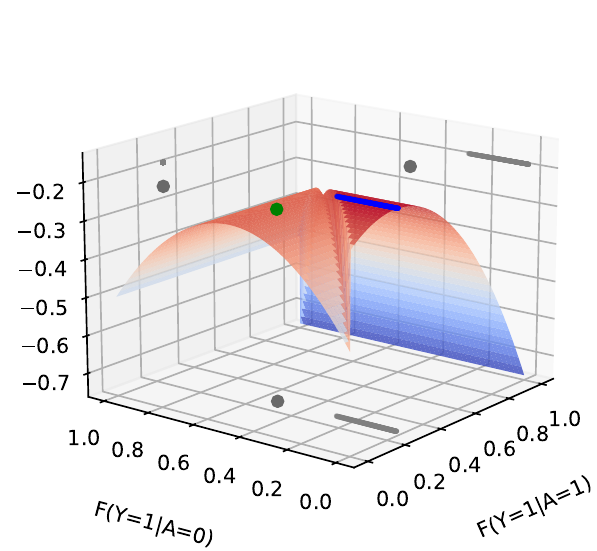}
	\caption{Expected Brier scores $\ol{S}_{\mathrm{pc}}$ for the mechanisms $P_M(A\given \Do(F))$ as defined in Example \ref{app:ex:not_proper}. It is observationally strictly proper, counterfactually improper.}
	\label{fig:not_proper}
\end{figure}

The scoring rule in this example is not (counterfactually) proper, but it is observationally strictly proper. As an extension of the self-defeating prophecy, if we pick the harder to predict value of $A$ when the forecast for the easier to predict value of $A$ gets close to being correct, this provides an example where the score is not observationally proper.
\begin{example}\label{app:ex:not_observationally_proper}
	Consider Example \ref{app:ex:not_proper}, but with the decision rule
	\begin{equation*}
		A = \begin{cases}
			1 & \text{if $F(Y=1\given A=0) \leq 0.4$ and $F(Y=1\given A=1) \geq 0.4$} \\\
			0 & \text{otherwise}.
		\end{cases}
	\end{equation*}
	Since $P_M(Y=1\given A=1)=0.25$ is much easier to predict than $P_M(Y=1\given A=0)=0.5$, even if an incorrect prediction $F(Y=1\given A=0)=F(Y=1\given A=1)=0.4$ is made such that we observe $A=1$ the expected score $-0.25\cdot(0.6)^2 - 0.75\cdot(0.4)^2 = -0.21$ is higher than for correctly predicting and observing $P_M(Y=1\given A=0)=0.5$: if the forecaster reports her true belief the expected score is $-0.5\cdot(0.5)^2 - 0.5 \cdot(0.5)^2 = -0.25$. Hence, the scoring rule is neither observationally nor counterfactually proper. See the bottom projection in Figure \ref{fig:self_defeating}: the optimal forecasts have neither $F(Y\given A=0)$ nor $F(Y\given A=1)$ correct.
\end{example}

One could expect this problem to be resolved if one considers a stochastic policy for deciding $A$ given $F$. However, the following example shows that full support of $P_M(A\given \Do(F))$ does not resolve this issue:
\begin{example}\label{app:ex:exploration}
	Consider the decision rule which is a mixture of the mechanism of Example \ref{app:ex:not_observationally_proper} and a uniform distribution over $A=0,1$: with probability $1/3$ there is uniform `exploration', and with probability $2/3$ the decision rule
	\begin{equation*}
		A = \begin{cases}
			1 & \text{if $F(Y=1\given A=0) \leq 0.4$ and $F(Y=1\given A=1) \geq 0.4$} \\\
			0 & \text{otherwise}
		\end{cases}
	\end{equation*}
	is used.
	For this mechanism the correct forecast obtains the expected score of $(\frac{1}{3}\cdot\frac{1}{2}+\frac{2}{3}) \cdot (-0.25) + \frac{1}{3}\cdot\frac{1}{2} \cdot (-0.19) \approx -0.24$, and if the forecaster reports $F(Y=1\given A=0)=F(Y=1\given A=1)=0.4$ then the expected score is $(\frac{1}{3}\cdot\frac{1}{2}) \cdot (-0.25) + (\frac{1}{3}\cdot\frac{1}{2}+\frac{2}{3}) \cdot (-0.21) \approx -0.22$, so the scoring rule is not observationally proper. See Figure \ref{fig:self_defeating_mixture}.
\end{example}

\begin{figure}[tb]
	\centering
	\begin{subfigure}[b]{0.45\textwidth}
		\centering
		\includegraphics[width=1\linewidth,trim=5 0 3 45,clip]{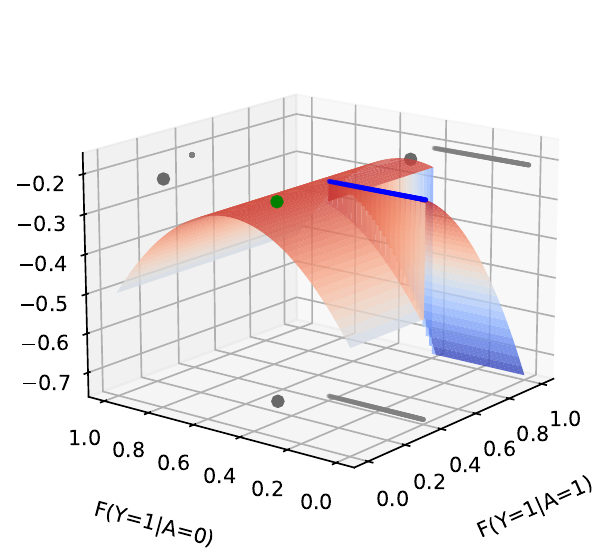}
		\caption{Example \ref{app:ex:not_observationally_proper}:\\ observationally improper,\\ counterfactually improper.}
		\label{fig:self_defeating}
	\end{subfigure}
	\begin{subfigure}[b]{0.45\textwidth}
		\centering
		\includegraphics[width=1\linewidth,trim=5 0 3 45,clip]{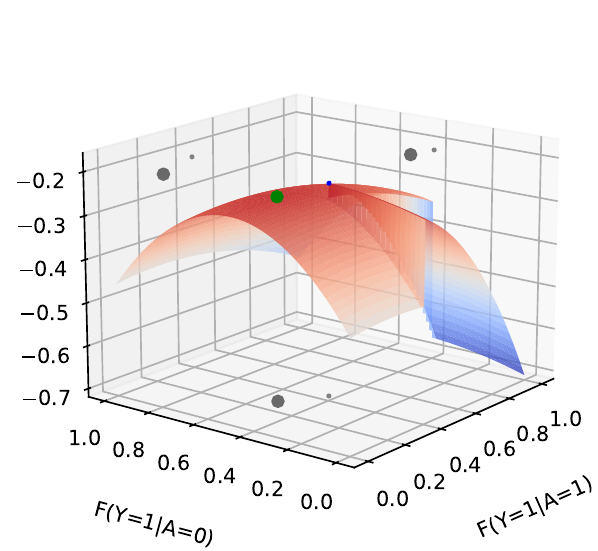}
		\caption{Example \ref{app:ex:exploration}:\\ observationally improper,\\ counterfactually improper.}
		\label{fig:self_defeating_mixture}
	\end{subfigure}
	\caption{Expected Brier score for the self-defeating prophecy, without positivity and with positivity. In both mechanisms expected score is neither observationally proper nor counterfactually proper}
\end{figure}

\subsection{Incentive-compatible and proper utility scores}
\begin{example}\label{app:ex:utility_score}
	Consider Example \ref{app:ex:not_proper}, interpreted as a forecast being given to an agent with utility $U(a,y) = y$; then indeed $A = a_F = \argmax_a F(Y=1\given A=a)$. The optimal action for the agent is $a^* = 0$, but to maximise her score (with respect to the Brier score, see Figure \ref{fig:not_proper}) the forecaster reports $F(Y=1\given A=0)=0.2$ and $F(Y=1\given A=1)=0.25$ inducing the agent to take the suboptimal action $a_F = 1$. Despite the forecast being observationally correct, it is not incentive-compatible with $U$.\\
	However, if we use the utility score then every forecast with $F(Y=1\given A=0) > F(Y=1 \given A=1)$ induces the optimal action $A=a^* = 0$ so it obtains the maximum expected score: see Figure \ref{fig:utility_score}. (To improve visibility we have not depicted the global optima.) Since all forecasts that induce the action $A=0$ obtain the same score, it is clear that the utility score is proper but not strictly proper.\\
	If we add any $\Delta < 0.25$ times the Brier score to the utility score then the resulting score becomes observationally strictly proper: the score is uniquely maximised for the correct value of $F(Y\given A=0)$; see Figure \ref{fig:proper_utility_score}.
\end{example}
\begin{figure}[!htb]
	\centering
	\begin{subfigure}[b]{0.45\textwidth}
		\centering
		\includegraphics[width=1\linewidth,trim=5 0 3 45,clip]{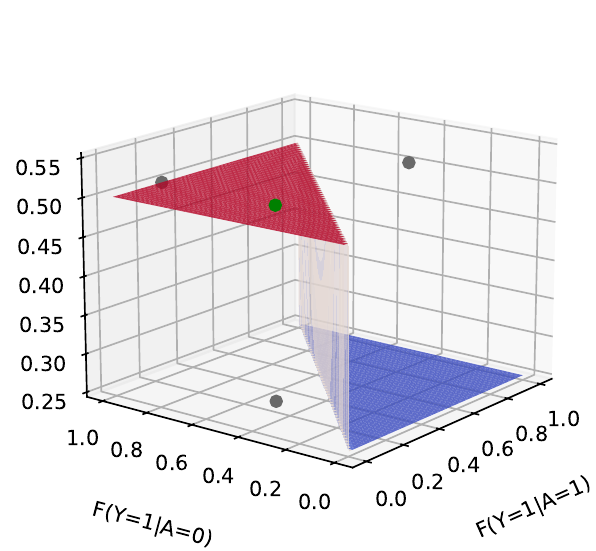}
		\caption{Utility score:\\observationally proper,\\counterfactually proper.}
		\label{fig:utility_score}
	\end{subfigure}
	\begin{subfigure}[b]{0.45\textwidth}
		\centering
		\includegraphics[width=1\linewidth,trim=5 0 3 45,clip]{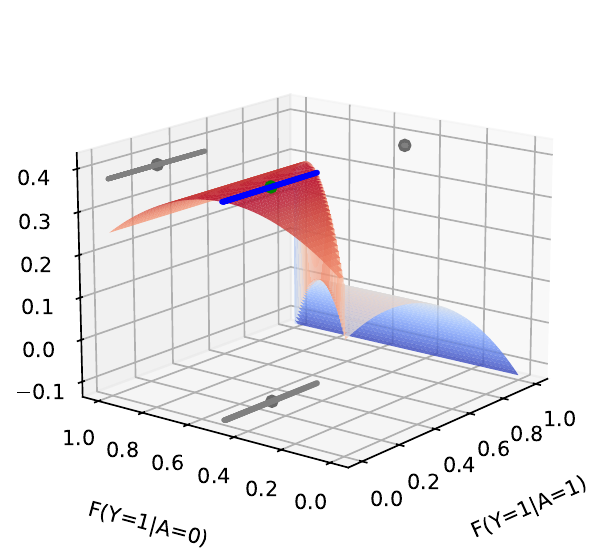}
		\caption{(Utility score) + $\Delta$(Brier score):\\observationally strictly proper,\\counterfactually proper.}
		\label{fig:proper_utility_score}
	\end{subfigure}
	\caption{Expected scores of Example \ref{app:ex:utility_score}. Both decision scoring rules are incentive compatible with utility $U$, since every forecast which attains maximum score induces the optimal action $A=a^*=0$.}
\end{figure}
\begin{figure}[!htb]
	\centering
	\begin{subfigure}[b]{0.45\textwidth}
		\centering
		\includegraphics[width=1\linewidth,trim=5 0 3 45,clip]{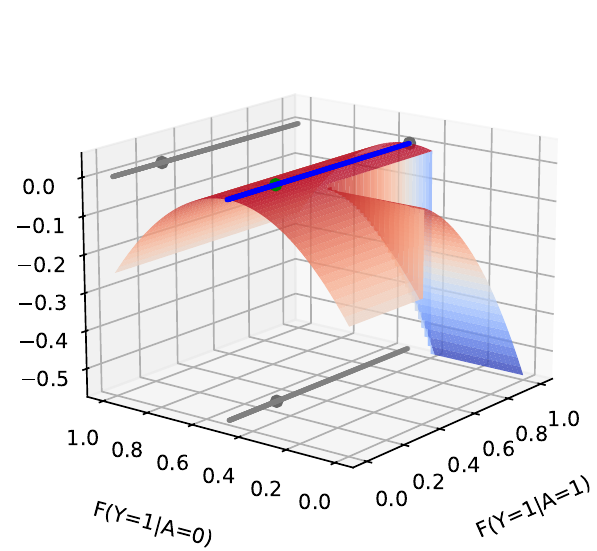}
		\caption{Divergence:\\observationally strictly proper,\\counterfactually proper}
		\label{fig:divergence}
	\end{subfigure}
	\begin{subfigure}[b]{0.45\textwidth}
		\centering
		\includegraphics[width=1\linewidth,trim=5 0 3 45,clip]{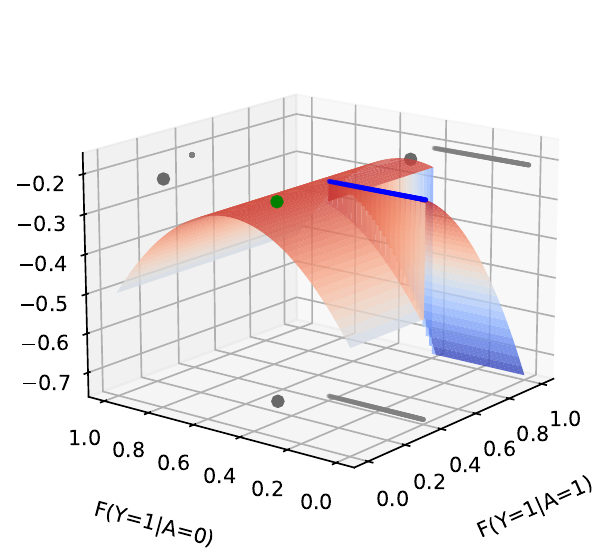}
		\caption{IPW score:\\observationally improper,\\counterfactually improper}
		\label{fig:ipw}
	\end{subfigure}\\
	\begin{subfigure}[b]{0.45\textwidth}
		\centering
		\includegraphics[width=1\linewidth,trim=5 0 3 45,clip]{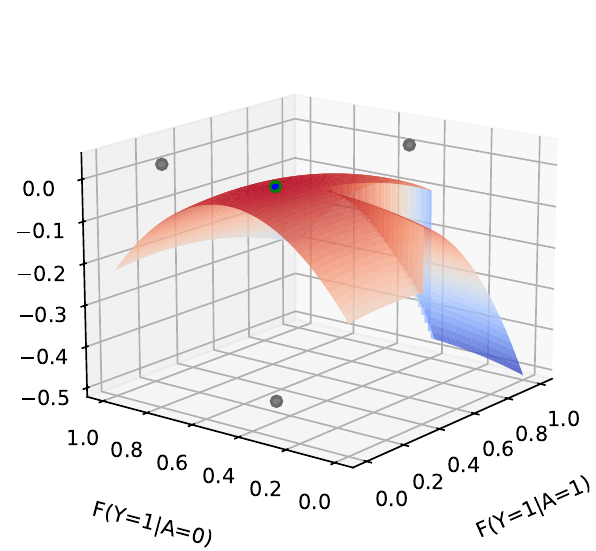}
		\caption{Divergence:\\strictly proper\\ }
		\label{fig:divergence_positivity}
	\end{subfigure}
	\begin{subfigure}[b]{0.45\textwidth}
		\centering
		\includegraphics[width=1\linewidth,trim=5 0 3 45,clip]{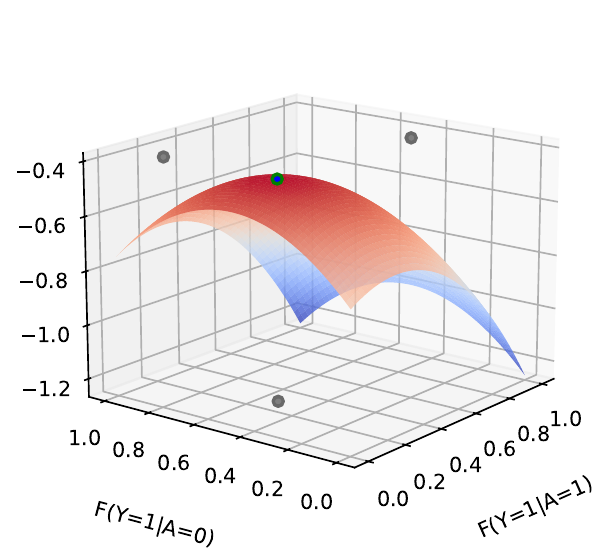}
		\caption{IPW score:\\strictly proper.\\}
		\label{fig:ipw_positivity}
	\end{subfigure}
	\caption{Plots related to Example \ref{app:ex:divergence_ipw}. If there is no positivity (\ref{fig:divergence} and \ref{fig:ipw}) then the divergence is observationally strictly proper and the IPW score is not observationally proper. If there is positivity (\ref{fig:divergence_positivity} and \ref{fig:ipw_positivity}) then the divergence and IPW score are strictly proper.}
\end{figure}

\subsection{Scoring with divergence and IPW score}\label{app:examples_divergence_ipw}
\begin{example}\label{app:ex:divergence_ipw}
	Consider the setting of Example \ref{app:ex:not_observationally_proper} where there is no positivity of $P_M(A\given \Do(F))$. Recall that the Brier score is neither observationally proper nor counterfactually proper. However, if we consider the performative divergence related to the Brier score, as depicted in Figure \ref{fig:divergence}, we see that the correct forecast is an optimum, so it is proper. Moreover, for any forecast that attains the optimum we see that the action $A=0$ is chosen (since the score does not depend on forecasts for $A=1$) and every optimal forecast is correct for $A=0$.\\
	In Figure \ref{fig:ipw} we see that the IPW score is neither observationally nor counterfactually proper, similar as in Figure \ref{fig:self_defeating}.\\
	When adding uniform exploration with probability 1/3 we have positivity as in Example \ref{app:ex:exploration}, we see that the the divergence and IPW score as depicted in Figure \ref{fig:divergence_positivity} and \ref{fig:ipw_positivity} are strictly proper: the correct forecast is the unique optimiser.
\end{example}

\section{Bias and consistency of estimates of divergence and IPW}\label{app:experiments_divergence_ipw}
For general scoring rules $S$, plugin estimators for the divergence and IPW score are given by
\begin{align*}
	\hat{D}_{\mathrm{pc}}(F, M)  & = \frac{1}{n} \sum_{i=1}^n \left( S\left( \hat{P}(Y \mid A = a_i, \Do(F)), y_i \right) - S\left( F(Y \mid A = a_i), y_i \right) \right) \\
	\hat{S}_{\mathrm{IPW}}(F, M) & = -\frac{1}{n}\sum_{i=1}^n \frac{S(F(Y\given A=a_i), y_i)}{\hat{P}(A=a_i \given \Do(F))},
\end{align*}
using estimates of $\hat{P}(Y \mid A = a_i, \Do(F))$ and $\hat{P}(A=a_i \given \Do(F))$. To investigate the variance and the reliability of these scoring methods at various sample sizes, we consider the mechanism of Example \ref{app:ex:exploration}, so where $P_M(A\given \Do(F))$ has full support for every $F\in\Pcal_{\Ycal}$. In this setting, the unbiased estimator for the divergence is given by
\begin{equation}\label{eqn:brier_unbiased}
	\hat{D}_{\mathrm{pc}}^{Brier} := \frac{1}{n}\sum_{i=1}^n(\hat{p}_{a_i} - f_{a_i})^2 - \frac{\hat{p}_{a_i} (1-\hat{p}_{a_i})}{n_{a_i}-1},
\end{equation}
where we write $n_a := \sum_{i=1}^n \I\{a_i = a\}$, $\hat{p}_{a} := \frac{1}{n_a}\sum_{i=1}^n y_i\I\{a_i=a\}$ and $f_{a} := F(Y=1\given A=a)$.

We consider two forecasts: the correct forecast $F^*(Y\given A=0)=0.5, F^*(Y\given A=1)=0.25$, and an incorrect forecast $\tilde{F}(Y\given A=0)=0.7, \tilde{F}(Y\given A=1)=0.45$. For both methods, the correct forecast should obtain a lower score than the incorrect forecast. For the sample sizes $n \in \{2, 3, 4, 5, 6, 7, 8, 9, 10, 21, 46\}$ we simulate for both the correct and the incorrect forecast, 10.000 datasets of $(A_1, Y_1), ..., (A_n, Y_n)\sim P_M(A, Y\given \Do(F))$, for $F=F^*$ and $F=\tilde{F}$. We subsequently estimate on every dataset the divergence and IPW score, and from those 400 repetitions we take the median and 5 and 95-percentiles. The results are depicted in Figure \ref{fig:estimates}. This verifies that the plugin estimators for thr Brier divergence and IPW Brier score are biased but asymptotically unbiased, and that the estimator (\ref{eqn:brier_unbiased}) for the Brier score is indeed unbiased.

These experiments can be run within 15 seconds on an Apple M2 Pro processor with 16GB RAM. Python code to run these experiments is provided in supplementary files.

\begin{figure}[!htb]
	\centering
	\begin{subfigure}[b]{0.325\textwidth}
		\centering
		\includegraphics[width=1\linewidth,trim=0 0 0 0,clip]{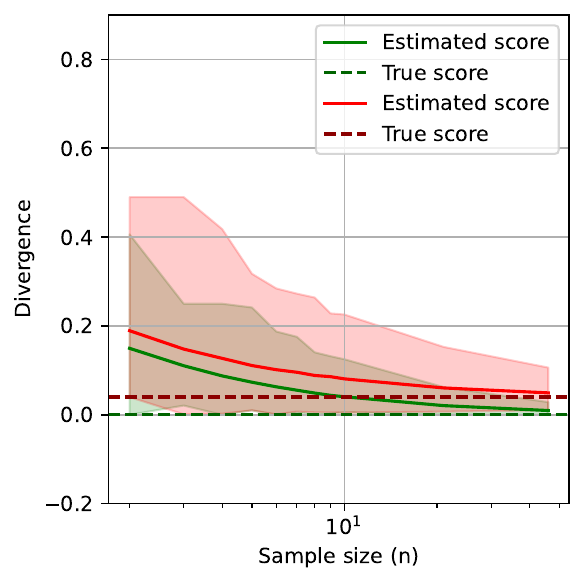}
		\caption{Divergence (plugin)}
		\label{fig:estimated_divergence}
	\end{subfigure}
	\hfill
	\begin{subfigure}[b]{0.325\textwidth}
		\centering
		\includegraphics[width=1\linewidth,trim=0 0 0 0,clip]{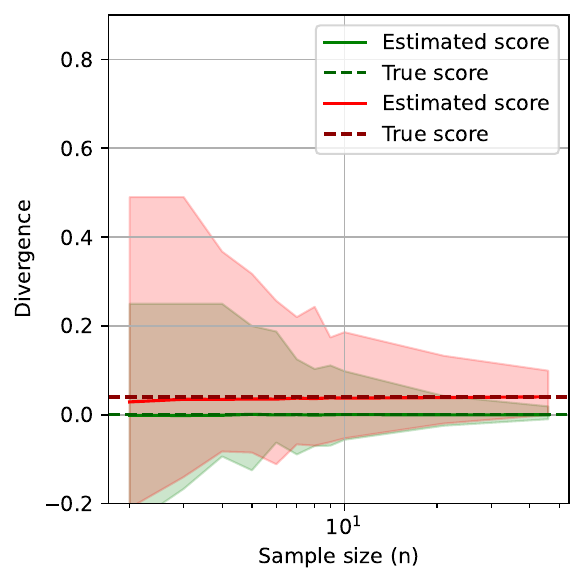}
		\caption{Divergence (unbiased)}
		\label{fig:estimated_divergence_unbiased}
	\end{subfigure}
	\hfill
	\begin{subfigure}[b]{0.325\textwidth}
		\centering
		\includegraphics[width=1\linewidth,trim=0 0 0 0,clip]{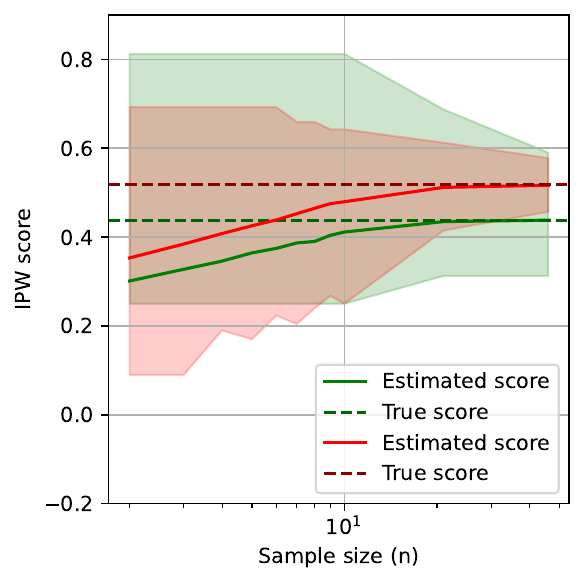}
		\caption{Mean IPW score (plugin)}
		\label{fig:estimated_ipw}
	\end{subfigure}
	\caption{Estimated divergence/IPW score (solid line) and true divergence/IPW score (dashed line), for a correct forecast (green) and an incorrect forecast (red), with 90\% confidence intervals.}
	\label{fig:estimates}
\end{figure}


\end{document}